\documentclass[11pt, a4paper]{article}

\usepackage[margin=2cm]{geometry}
\usepackage{xcolor}
\usepackage{booktabs}
\usepackage{mathtools}
\usepackage{url}
\PassOptionsToPackage{hyphens}{url}
\usepackage{hyperref}
\usepackage{amsthm}
\usepackage{tabularx}
\usepackage{multirow}
\usepackage{makecell}
\usepackage{float}
\usepackage{comment}
\usepackage{amsfonts}
\usepackage{siunitx}
\usepackage{cancel}
\usepackage{tikz}
\usepackage{subcaption}
\usepackage{gensymb}
\usetikzlibrary{arrows, shapes, positioning}
\usepackage{bbm}
\usepackage[numbers]{natbib}
\usepackage{enumerate}
\usepackage{lscape}
\usepackage{amssymb}
\usepackage{orcidlink}

\usepackage{algorithm}
\usepackage[noend]{algpseudocode}

\newtheorem{theorem}{Theorem}[section]
\newtheorem{corollary}{Corollary}[theorem]
\newtheorem{lemma}[theorem]{Lemma}
\newtheorem{proposition}[theorem]{Proposition}

\newtheorem{definition}[theorem]{Definition}

\newcommand{\tr}[1]{{\text{tr} #1}}
\newcommand{\diag}[1]{{\text{diag}\left(#1\right)}}
\newcommand{\Diag}[1]{{\text{Diag}\left(#1\right)}}
\newcommand{\rank}[1]{{\text{rank}\left(#1\right)}}

\newcommand{\IP}[2]{\left\langle #1, #2 \right\rangle}
\newcommand{\eqdef}{\mathrel{\mathop:}=}
\newcommand{\psd}{\succeq}
\newcommand{\argmax}{\text{arg max}}
\newcommand{\argmin}{\text{arg min}}

\newcommand{\nsd}{\preceq}

\newcommand\norm[1]{\left\lVert#1\right\rVert}
\newcommand{\mc}{\mathcal}
\newcommand{\ceil}[1]{\left\lceil #1 \right\rceil}
\newcommand{\floor}[1]{\left\lfloor #1 \right\rfloor}
\newcommand{\aug}[1]{\hat{#1}}
\newcommand{\eps}{\varepsilon}

\begin{document}

\title{Improved semidefinite programming bounds for the maximum $k$-colorable subgraph problem}
\author{Mathijs Barkel\,\orcidlink{0000-0003-1276-588X}\thanks{Tilburg University, Department of Econometrics \& Operations Research, CentER, 5000 LE Tilburg, 
\href{mailto:m.j.barkel@tilburguniversity.edu}{m.j.barkel@tilburguniversity.edu} (corresponding author)}, 
\and
Renata Sotirov\,\orcidlink{0000-0002-3298-7255}\thanks{Tilburg University, Department of Econometrics \& Operations Research, CentER, 5000 LE Tilburg, 
\href{mailto:r.sotirov@tilburguniversity.edu}{r.sotirov@tilburguniversity.edu}}
}

\date{ }
\noindent
\maketitle

\begin{abstract}
We study the maximum $k$-colorable subgraph (M$k$CS) problem, which consists in finding  a largest $k$-colorable induced subgraph in a given graph.  
We consider a Semidefinite Programming (SDP) relaxation for the M$k$CS problem and regard its resulting upper bound as a graph parameter.
We present several properties of this graph parameter, from which we obtain that the M$k$CS problem is solvable in polynomial time for $k$-perfect graphs.
We further derive two novel families of valid inequalities to strengthen the SDP relaxation.
The first family reduces to a family of inequalities for the Boolean quadric polytope when $k=1$, and the second family generalizes the family of rank inequalities for binary linear programming formulations of the stable set problem.
We efficiently solve the strengthened SDP relaxation using a cutting-plane algorithm that is based on the Alternating Direction Method of Multipliers (ADMM). 
Extensive computational experiments show that the obtained upper bounds outperform the best upper bounds from the literature.
To complement our SDP-based upper bounds, we propose an integer ADMM variant that uses an exact Binary Semidefinite Programming (BSDP) formulation of the M$k$CS problem to produce high-quality feasible solutions. 
To the best of our knowledge, this is the first application of the ADMM to compute integer solutions to a BSDP problem.
\paragraph{Keywords} $k$-colorable subgraph problem; graph coloring; $k$-perfect graphs; cutting-plane methods; integer ADMM; semidefinite programming. 

\paragraph{Mathematics Subject Classification} {05C15, 05C17, 90C22, 90C27, 90C35.}
\end{abstract}

\section*{Acknowledgments}
The authors thank Frank de Meijer for his tabu search code for the graph coloring problem, which is used to cluster the valid inequalities.
The second author would also like to thank Renate van der Knaap for her tabu search algorithm for the M$k$CS problem. 
This work used the Dutch national e-infrastructure with the support of the SURF Cooperative using grant no. EINF-13681.

\section{Introduction}
The maximum $k$-colorable subgraph (M$k$CS) problem is to find the largest induced subgraph in a given graph that can be colored in $k$ colors such that no two adjacent vertices have the same color.
Since a partial $k$-coloring of a graph is a set of pairwise disjoint stable sets in the graph, each one corresponding to a color, the solution for the M$k$CS problem corresponds to the largest possible number of colored vertices in a partial $k$-coloring.
The M$k$CS problem is also known as the maximum $k$-partite induced subgraph problem, as well as the maximum bipartite subgraph problem for the case $k=2$, while for  $k=1$ it coincides with the stable set problem.  
The M$k$CS problem is known to be $\mc{NP}$-hard~\citep{Lewis1980NPHardness}.

The M$k$CS problem arises in various practical applications, including channel assignment in spectrum-sharing networks
\citep{ApplicationChannelAssignment5, ApplicationChannelAssignment1, ApplicationChannelAssignment4, ApplicationChannelAssignment3, ApplicationChannelAssignment2}, very-large-scale integration design \citep{ApplicationVLSI2, ApplicationVLSI1}, human genetic research \citep{ApplicationVLSI2, ApplicationGenetics} and job scheduling and register allocation \citep{ApplicationJobScheduling}. 
The M$k$CS problem is also of interest due to its relation to several well-known combinatorial optimization problems. For instance, it is closely related to the graph coloring problem and the clique cover problem.

Several works address the M$k$CS problem using integer programming, some of which exploit that the M$k$CS problem is invariant under color permutations and graph automorphisms.
\citet{Campelo2010ILPLagr} introduce a Binary Linear Programming (BLP) formulation where color symmetry is handled through the selection of representatives of stable sets. 
They further propose a parallelized subgradient algorithm that uses a Lagrangian relaxation to decompose the M$k$CS problem into smaller weighted stable set problems. 
Lower and upper bounds are computed for instances with up to $500$ vertices and $3$ colors.
\citet{Januschowski2011ILPSym, Januschowski2011MkCS} propose an alternative BLP formulation as the basis of a branch-cut-and-propagate algorithm that leverages both valid inequalities and domain propagation to handle color and graph symmetry. 
They solve instances with up to $1085$ vertices and $25$ colors to optimality.
\citet{Quintero2022Quantum} derive two quadratic unconstrained binary optimization formulations for the M$k$CS problem that are solved using D-Wave’s quantum annealing device. 
The authors consider instances with up to $50$ vertices and $5$ colors.

Another line of research focuses on Semidefinite Programming (SDP).
\citet{Narasimhan1990GenTheta} propose an eigenvalue bound for the M$k$CS problem called the generalized $\vartheta$-number of a graph~$\mc{G}$ (denoted by $\vartheta_k (\mc{G})$), which was later reformulated by \citet{Alizadeh1995SDPGenTheta} as the solution of a semidefinite program. 
This graph parameter coincides with the celebrated $\vartheta$-number by \citet{Lovasz1979} for $k=1$, and provides also a lower bound on the minimum number of colors needed for a valid $k$-multicoloring of the complement of the considered graph.
\citet{Sinjorgo2022GenVartheta} study $\vartheta_k (\mc{G})$ for highly symmetric graphs and provide closed-form solutions  for several graph classes.
\citet{Kuryatnikova2022MkCSProblem} strengthen $\vartheta_k (\mc{G})$ by incorporating nonnegativity constraints, and consider several other SDP relaxations for the  M$k$CS problem.
Their strongest relaxation is derived via a vector-lifting approach, followed by a reduction in the size of the SDP relaxation that exploits the invariance of the M$k$CS problem under color permutations. 
This relaxation is further strengthened using inequalities from the Boolean quadric polytope (BQP).
A weaker but more scalable relaxation is obtained through a matrix-lifting approach.
The authors compare the strength of their relaxations from a theoretical point of view, and provide computational experiments on benchmark instances with up to $500$ vertices. 
Several instances are solved to optimality, and the strongest upper bounds improve upon the upper bounds from \cite{Campelo2010ILPLagr} for all but two instances.

Motivated by this success, we continue the line of research on SDP-based approaches for the M$k$CS problem. 
In particular, rather than using Interior Point Methods (IPMs), the classical approach for solving SDP problems, we consider the Alternating Direction Method of Multipliers (ADMM). 
This is a first-order method introduced in the 1970s to solve general convex optimization problems, see e.g., \cite{Boyd2011ADMMTheory}. 
Compared to IPMs, the time and memory requirements of the ADMM scale better with the order of the matrix variables and the number of constraints.
Indeed, the ADMM, and variants thereof, have been successfully applied to solve large-scale SDP relaxations of various combinatorial optimization problems, see e.g., \cite{Hu2020ADMMQSPP, Li2021PRSMMinCut, Oliveira2018ADMMForQAP, Sinjorgo2025ADMMStability}.

\subsection{Main results and outline}
We start our study of the M$k$CS problem with a Binary Semidefinite Programming (BSDP) formulation proposed by \citet{deMeijer2024BSDP}, from which we derive our basic SDP relaxation that coincides with the matrix-lifting relaxation from \cite{Kuryatnikova2022MkCSProblem}. 
We consider the optimal value of the basic SDP relaxation, denoted by $\theta_k (\mc{G})$, as a graph parameter.
We derive several properties of this graph parameter, including that  $\theta_k (\mc{G})$ is upper bounded by $k \vartheta(G)$ for all $k\geq 1$, and that the sequence $\left( \theta_k(\mc{G}) \right )_k$ is monotonically increasing in $k$. 
We also show that our graph parameter serves as  a lower bound for the $k$-clique cover number of $\mc{G}$.
\citet{lovasz1983perfect} defined a $k$-perfect graph as a graph for which the number of vertices in a maximum $k$-colorable subgraph equals the $k$-clique cover number for each induced subgraph of $\mc{G}$.
Therefore, these two graph parameters can be computed for $k$-perfect graphs in polynomial time up to fixed precision by solving our SDP relaxation for the M$k$CS problem.
\citet{lovasz1983perfect} posed an open question on the computability of these parameters for perfect graphs, and we provide an answer for $k$-perfect graphs where $k>1$. 
The case $k=1$ is a well-known result by \citet{Grtschel1984PolynomialAF}.
Moreover, we provide an answer to  Lov\'asz question for a class of perfect graphs known as totally perfect graphs. 

We also propose two novel families of valid inequalities to strengthen our SDP relaxation.
Our first family of inequalities reduces to a family of inequalities for the BQP by \citet{Padberg1989BQP} when $k=1$.
These new inequalities are not only relevant for our SDP relaxation for the M$k$CS problem, but potentially also for matrix-lifting relaxations of other packing or partitioning problems.
Our second family of inequalities generalizes the family of rank inequalities for BLP formulations of the stable set problem by \citet{Nemhauser1974PropertiesPolyhedra}. 
Given that there are exponentially many generalized BQP and rank inequalities, we also consider computationally tractable special cases of these inequalities that concern triangles, cliques and odd holes.

We solve the strengthened SDP relaxation using a recently proposed extension of the ADMM, called the cutting-plane ADMM (CP-ADMM), which integrates the ADMM with Dykstra's cyclic projection algorithm, see \cite{deMeijer2025ADMMQMSTP, deMeijer2021QCCP, deMeijer2023ADMMPartitioning}.
We improve the performance of the CP-ADMM by limiting the number of new cuts in which each variable may appear; a feature that was not yet used in existing CP-ADMM algorithms.
Extensive computational experiments on benchmark instances show that the strengthened SDP relaxation can be solved efficiently, and that the upper bounds obtained outperform the best upper bounds from the literature.

We  present a novel integer ADMM variant, denoted by INT-ADMM, that is aimed at constructing feasible solutions for the BSDP formulation, thereby yielding valid lower bounds.
Our INT-ADMM is based on the $\ell_p$-Box ADMM introduced by \citet{Wu2019LpBoxADMM}. To the best of our knowledge, we are the first to apply the key idea of the $\ell_p$-Box ADMM to solve a BSDP problem.
The INT-ADMM is not guaranteed to converge to an optimal or even a feasible solution, but our computational experiments show that our algorithm consistently leads to high-quality feasible solutions, often improving upon the best solutions in the literature. 
This success is partly due to an effective mechanism for escaping local optima.

This paper is further structured as follows.
In Section~\ref{Sec: BSDP formulation}, we formally introduce the M$k$CS problem, its BSDP formulation and our basic SDP relaxation.
In Section~\ref{Sec: Graph parameter}, we present our theoretical results on the graph parameter $\theta_k(\mc{G})$.
In Section~\ref{Sec: Valid inequalities}, we derive the valid inequalities.
In Sections~\ref{Sec: CP-ADMM} and~\ref{Sec: INT-ADMM}, we present the CP-ADMM and INT-ADMM, respectively.
In Section~\ref{Sec: Computational experiments}, we report the results of our computational experiments, and finally, in Section~\ref{Sec: Conclusion}, we present our conclusions.

\subsection{Notation} \label{Sec: notation}
We define $[n] \eqdef \{1,\hdots,n\}$, and denote by $\mathbbm{1}_\mc{S} \in \{0,1\}^n$ the indicator vector of $\mc{S} \subseteq [n]$.
We denote by $0_n\in\mathbb{R}^n, e_n\in\mathbb{R}^n, \mathbf{0}_n \in\mathbb{R}^{n\times n}, J_n \in\mathbb{R}^{n\times n}$ and $I_n \in\mathbb{R}^{n\times n}$ the all-zeros vector, all-ones vector, all-zeros matrix, all-ones matrix and identity matrix, respectively.
We sometimes omit the subscripts of these matrices when their size is clear from the context or irrelevant. 
We regularly work with matrices of size $(n+1)\times(n+1)$, in which case the first row and column are indexed by $0$.

The rank of a matrix $X$ is denoted by $\rank{X}$.
The trace inner product of two matrices $A, B \in$~$\mathbb{R}^{m\times n}$ is given by $\IP{A}{B} \eqdef \tr(A^\top B) = \sum_{i=1}^m \sum_{j=1}^n A_{ij} B_{ij}$.
A real symmetric matrix $X$ of size $n\times n$ is denoted by $X \in \mathbb{S}^n$, and we denote by $X \psd \textbf{0}$ that the symmetric matrix $X$ is positive semidefinite (PSD). 
The set of all PSD matrices of size $n\times n$ is denoted by $\mathbb{S}^n_+ \eqdef \{X\in \mathbb{S}^n: X \psd \textbf{0}\}$. 
Similarly, $X \nsd \textbf{0}$ indicates that matrix $X$ is negative semidefinite (NSD), and we define $\mathbb{S}^n_- \eqdef \{X\in \mathbb{S}^n: X \nsd \textbf{0}\}$. 
We write $X\geq \textbf{0}$ to denote that the matrix $X$ is elementwise nonnegative.

The operator $\text{diag}: \mathbb{R}^{n\times n} \to \mathbb{R}^n$ maps a matrix to the vector containing its diagonal entries.
Its adjoint operator $\text{Diag}: \mathbb{R}^{n} \to \mathbb{R}^{n\times n}$ maps a vector to the diagonal matrix with that vector as diagonal. 
For any $\mc{X} \subseteq \mathbb{R}^n$ and $u, w \in \mathbb{R}^n$, the $w$-weighted projection of $u$ on $\mc{X}$ is given by $\mc{P}_\mc{X}^w(u) \eqdef \argmin_{x \in \mc{X}} \left\{\sum_{i \in [n]} w_{i}(x_{i}-u_{i})^2\right\}$.
The (standard) projection of $u$ on $\mc{X}$ is given by $\mc{P}_\mc{X}(u) \eqdef \mc{P}_\mc{X}^{e_n}(u) = \argmin_{x \in \mc{X}} \norm{x-u}^2$.

\section{A BSDP formulation and SDP relaxation for the M$k$CS problem} \label{Sec: BSDP formulation}
Let $\mc{G} = (\mc{V},\mc{E})$ be a simple undirected graph with vertex set $\mc{V} = [n]$ and edge set $\mc{E}$, and let $k \in [n-1]$ be a given number of colors. 
A graph is said to be $k$-colorable if one can assign to each vertex one of the $k$ colors such that no two adjacent vertices have the same color. The M$k$CS problem is to find an induced $k$-colorable subgraph of $\mc{G}$ with maximum cardinality.
Recall that the subgraph of $\mc{G}$ induced by $\mc{S} \subseteq \mc{V}$ is the graph $\mc{G}[\mc{S}] = (\mc{S}, \mc{E}[\mc{S}])$ with vertex set $\mc{S}$ and edge set $\mc{E}[\mc{S}] \eqdef \{\{i,j\} \in \mc{E}: i,j \in \mc{S}\}$.

Let $P \in \{0,1\}^{n \times k}$ be a binary matrix variable such that $P_{ic}=1$ if and only if vertex $i \in [n]$ is assigned color $c \in [k]$. 
Then, the M$k$CS problem can be formulated as the following BLP problem: 
\begin{align} \tag{BLP$_k$} \label{BLP} &\begin{aligned}
    \alpha_k(\mc{G}) := \max \quad & \sum_{i \in [n]} \sum_{c \in [k]} P_{ic} \\
    \text{s.t.} \quad & P_{ic} + P_{jc} \leq 1 & \quad \forall \{i,j\} \in \mc{E}, \; c \in [k] \,\\
    & \sum_{c \in [k]}P_{ic} \leq 1 & \quad \forall i \in [n]\, \\
    & P_{ic} \in \{0,1\} & \quad \forall i \in [n], \; c \in [k].
\end{aligned} \end{align}

Next, we present a derivation from \cite{deMeijer2024BSDP} of a BSDP formulation of the M$k$CS problem using a matrix-lifting approach. 
The key idea of this approach is to substitute the matrix variable $P$ from \eqref{BLP} by another matrix variable $X$ such that $X = PP^\top$. 
This substitution can be performed as follows.
First, using that $P$ is binary, we have that $X_{ii} =\sum_{c \in [k]} P_{ic}^2 = \sum_{c \in [k]} P_{ic}$ for each $i \in [n]$, so we can rewrite the objective as $\IP{I_n}{X}$.
Second, by again exploiting the binarity of $P$, we can replace the constraints $P_{ic} + P_{jc} \leq 1$ for $\{i,j\} \in \mc{E}$ and $c \in [k]$, with the constraints $X_{ij} = \sum_{c \in [k]} P_{ic}P_{jc} = 0$ for $\{i,j\} \in \mc{E}$.
Finally, we consider the following set $\mc{D}^n_k$ that captures the remaining constraints on $P$ and the relation $X=PP^\top$:
\begin{equation}
    \mc{D}^n_k \eqdef \left\{X \in \mathbb{S}^n: X= PP^\top,\, \sum_{c \in [k]} P_{ic} \leq 1 \,\, \forall i \in [n], \,\, P \in \{0,1\}^{n \times k}\right\}. \label{BQP characterization 1}  
\end{equation}
Note that $\mc{D}^n_1$ coincides with the set of extreme points of the BQP introduced in \citep{Padberg1989BQP}.
As implied by Proposition 2 of \cite{Letchford2012BinarySDPMatrices} and Corollary~2 of \cite{deMeijer2024BSDP}, respectively, $\mc{D}^n_k$ can be described independently of the variable $P$ in either of the following two ways:
\begin{align}
    \mc{D}^n_k &= \left\{X \in \mathbb{S}^n: X \psd \textbf{0}, \,\,\rank{X} \leq k, \,\, X \in \{0,1\}^{n \times n}\right\} \label{BQP characterization 2}\\ 
    &= \left\{X \in \mathbb{S}^n: \aug{X} = \begin{pmatrix} k & \diag{X}^\top \\ \diag{X} & X \end{pmatrix} \psd \textbf{0}, \,\, X \in \{0,1\}^{n \times n}\right\}.  \label{BQP characterization 3}
\end{align}
Using the latter characterization, we obtain the following BSDP formulation of the M$k$CS problem:
\begin{align} \tag{BSDP$_k$} \label{BSDP} &\begin{aligned}
    \alpha_k(\mc{G}) = \max \quad & \IP{I_n}{X} \\
    \text{s.t.} \quad & X_{ij} = 0 \qquad \forall \{i,j\} \in \mc{E} \\
    & \aug{X} = \begin{pmatrix} k & \diag{X}^\top \\ \diag{X} & X \end{pmatrix} \psd \textbf{0} \\
    & X \in \{0,1\}^{n \times n}.
\end{aligned} \end{align}
We denote the feasible set of \eqref{BSDP} by $\mc{F}_{\eqref{BSDP}}$.
The binary matrix variable $X$ has the interpretation that a diagonal entry $X_{ii} = 1$ if and only if vertex $i \in [n]$ is assigned a color, and an off-diagonal entry $X_{ij} = 1$ if and only if vertices $i, j \in [n]$ with $i \neq j$ are assigned the same color. 
The appearance of the constant $k$ in the PSD constraint and the binarity of the matrix variable together ensure that $\rank{X} \leq k$, or equivalently, that at most $k$ colors are used.
Note that \eqref{BSDP} is free of color symmetry, whereas \eqref{BLP} is not.

For $k = 1$, i.e., the stable set problem, it is sufficient to impose integrality on the diagonal of $X$ to ensure also integrality on the off-diagonal entries. 
This follows from the nonnegativity of the determinants of the $3\times 3$ principle submatrices of $\aug{X}$, see e.g., Section 3.2 of \cite{Helmberg2000SDPForCO}.
This result does not hold in general for $k>1$.
For instance, consider the graph $\mc{G} = (\mc{V}, \mc{E})$ with $\mc{V} = \{1,2,3\}$ and $\mc{E} = \{\{2,3\}\}$ and let $k=2$. Then $X = \begin{psmallmatrix} 1 & 1/2 & 0\\ 1/2 & 1 & 0 \\ 0 & 0 & 0\end{psmallmatrix}$ does not satisfy integrality on all off-diagonal entries, while $X$ does satisfy all other constraints of \eqref{BSDP}.

By relaxing the integrality constraints on $X$ in \eqref{BSDP} and replacing them with the following bound constraints
\begin{equation}
    X \geq \textbf{0}_n, \quad \diag{X} \leq e_n, \label{eq bounds}
\end{equation}
we obtain our basic SDP relaxation:
\begin{align} \tag{SDP$_k$} \label{SDP} &\begin{aligned}
    \theta_k(\mc{G}) \eqdef \max \quad & \IP{I_n}{X} \\
    \text{s.t.} \quad & X_{ij} = 0 \qquad \forall \{i,j\} \in \mc{E} \\
    & \aug{X} = \begin{pmatrix} k & \diag{X}^\top \\ \diag{X} & X \end{pmatrix} \psd \textbf{0} \\
    & X \geq \textbf{0}_n, \,\, \diag{X} \leq e_n.
\end{aligned} \end{align}
We denote the feasible set of \eqref{SDP} by $\mc{F}_{\eqref{SDP}}$.
The SDP relaxation \eqref{SDP} was first proposed in~\cite{Kuryatnikova2022MkCSProblem}, where $\theta_k(\mc{G})$ is denoted by $\theta^3_k(\mc{G})$.
It was proved in that paper that \eqref{SDP} satisfies the Slater condition.
Observe that \eqref{SDP} is a Doubly Nonnegative relaxation, since it is an SDP relaxation over the set of nonnegative matrices. 
Moreover, note that the constraint $\diag{X} \leq e_n$ is sufficient to ensure that $X \leq J_n$. This follows from the nonnegativity of the determinants of the $2\times 2$ principle submatrices of $X$.
Furthermore, as shown in the following lemma, \eqref{SDP} dominates the LP relaxation of \eqref{BLP}.
\begin{lemma}
    For any graph $\mc{G}$ and $k \in [n-1]$, we have that $\theta_k(\mc{G}) \leq LP_k(\mc{G})$, where $LP_k(\mc{G})$ denotes the optimal value of the LP relaxation of \eqref{BLP}.
\end{lemma}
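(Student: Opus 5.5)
The plan is to go from an SDP solution to an LP solution directly: take any $X \in \mc{F}_{\eqref{SDP}}$ and build a feasible point $P$ of the LP relaxation of \eqref{BLP} with the same objective value, namely $\sum_{i,c} P_{ic} = \IP{I_n}{X}$. Taking $X$ optimal then gives $\theta_k(\mc{G}) \leq LP_k(\mc{G})$. The natural color-symmetric choice is
\[
P_{ic} \eqdef \frac{X_{ii}}{k} \qquad \forall i \in [n],\; c \in [k],
\]
which spreads the diagonal value of vertex $i$ uniformly over the $k$ colors.

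Most of the checks are immediate.
\begin{itemize}
\item Objective: $\sum_{i}\sum_{c} P_{ic} = \sum_i X_{ii} = \IP{I_n}{X}$.
\item Bounds: $X \geq \textbf{0}$ and $\diag{X} \leq e_n$ give $0 \leq P_{ic} \leq 1/k \leq 1$.
\item Assignment constraints: $\sum_{c \in [k]} P_{ic} = X_{ii} \leq 1$.
\end{itemize}

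The only substantive step is the edge constraint $P_{ic} + P_{jc} \leq 1$ for $\{i,j\} \in \mc{E}$. This is equivalent to $X_{ii} + X_{jj} \leq k$ whenever $X_{ij} = 0$. To get it, look at the principal $3\times 3$ submatrix of $\aug{X}$ indexed by $\{0,i,j\}$. Write $a = X_{ii}$ and $b = X_{jj}$, and use $X_{ij}=0$:
\[
\begin{pmatrix} k & a & b \\ a & a & 0 \\ b & 0 & b \end{pmatrix} \psd \textbf{0}.
\]
\begin{itemize}
\item If $a,b>0$: the lower-right block $\Diag{(a,b)}$ is positive definite, so the Schur complement of that block must be nonnegative. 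This gives $k - a^2/a - b^2/b = k - a - b \geq 0$.
\item If $a=0$: the inequality reduces to $b \leq k$, which holds because $b \leq 1 \leq k$. The case $b = 0$ is symmetric.
\end{itemize}
Alternatively, testing the matrix against the vector $(1,-1,-1)$ yields $k - a - b \geq 0$ directly, with no case split. This gives $X_{ii}+X_{jj} \leq k$ for every edge $\{i,j\}$ and completes the feasibility of $P$.

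I expect no real obstacle beyond this Schur complement (or test-vector) computation. The one point to watch is remembering to use $X_{ij}=0$ on edges together with the border row $\diag{X}$ of $\aug{X}$; PSD-ness of $X$ alone does not give the bound $X_{ii} + X_{jj} \leq k$.
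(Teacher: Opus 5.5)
Your proof is correct and follows the paper's argument: the paper also sets $P_{ic} = X_{ii}/k$ and obtains the edge constraint from $\IP{aa^\top}{\aug{X}} \geq 0$ with $a = \begin{psmallmatrix} 1 \\ -\mathbbm{1}_{\{i,j\}} \end{psmallmatrix}$, which is your test vector $(1,-1,-1)$. Your Schur-complement case analysis is a valid alternative derivation of the same inequality $X_{ii}+X_{jj}\leq k$, but it is not needed once you use the test vector.
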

\begin{proof}
   Let $X \in \mc{F}_{\eqref{SDP}}$. It is not difficult to show that the matrix $P \in [0,1]^{n \times k}$ with $P_{ic} = X_{ii}/{k}$ for $i \in [n]$ and $c \in [k]$ is feasible to the LP relaxation of \eqref{BLP}, and that its objective value $\sum_{i \in [n]}\sum_{c \in [k]} P_{ic}$ equals $\IP{I_n}{X}$. 
   In particular, for any $\{i,j\} \in \mc{E}$ and $c \in [k]$, we have $P_{ic} + P_{jc} = \frac{1}{k}(X_{ii} + X_{jj}) \leq 1$, where the inequality follows from the constraint $\IP{aa^\top}{\aug{X}} \geq 0$ for $a = \begin{psmallmatrix} 1 \\ -\mathbbm{1}_{\{i,j\}} \end{psmallmatrix}$, which holds because $\aug{X} \psd \textbf{0}$.
\end{proof}

\section{Graph parameter $\theta_k(\mc{G})$} \label{Sec: Graph parameter}
We consider $\theta_k(\mc{G})$ as a graph parameter and derive several of its properties.
Our main result is that $\theta_k(\mc{G})$ is sandwiched between $\alpha_k(\mc{G})$ and the $k$-clique cover number.
Graphs for which these two values coincide for each induced subgraph are known as $k$-perfect graphs. 
Since $\theta_k(\mc{G})$ can be computed in polynomial time up to fixed precision, it follows that the M$k$CS problem is also polynomially solvable for $k$-perfect graphs.

\citet{Narasimhan1990GenTheta} introduced the generalized $\vartheta$-number of $\mc{G}$ as an eigenvalue bound for $\alpha_k(\mc{G})$, which can be computed as follows \cite{Alizadeh1995SDPGenTheta}:
\begin{align} \tag{$ \vartheta_k$} \label{generalizedTheta} &
\begin{aligned}
    \vartheta_k(\mc{G}) \eqdef
     \max \quad & \IP{J_n}{Z}  \\
    \text{s.t.} \quad & Z_{ij}=0  \qquad \forall \{i,j\} \in \mc{E}  \\
    &  \IP{I_n}{Z} =k \\
    & Z \succeq \mathbf{0}, \, I_n-Z\succeq \mathbf{0}.
\end{aligned} 
\end{align}    
\citet{Kuryatnikova2022MkCSProblem} strengthened $\vartheta_k(\mc{G})$ by adding nonnegativity constraints.
The resulting graph parameter is denoted by  $\vartheta_k'(\mc{G})$, and for $k=1$ it coincides with the Schrijver number {\cite{Schrijver1979}}, which is denoted in the literature by $\vartheta'(\mc{G})$. 
It is known that  $\vartheta'(\mc{G})=\theta_1(\mc{G})$ \cite{Kuryatnikova2022MkCSProblem}.
It was conjectured in the same paper that $\theta_k(\mc{G})$ is at least as strong as  $\vartheta'_k(\mc{G})$ for all $k\geq 1$.
The authors of \cite{Sinjorgo2022GenVartheta} proved that $\theta_k(\mc{G})$ and $\vartheta'_k(\mc{G})$ coincide for any strongly regular graph with parameters $(n,d,\lambda,\mu)$ and restricted eigenvalues $r\geq0$, $s<-1$, provided that $k < (n(r+1))/(r+n-d)$. 

Similarly to the (generalized) $\vartheta$- and $\vartheta'$-numbers,  $\theta_k(\mc{G})$ can also be regarded as a graph parameter. 
Let us first relate  $\theta_k(\mc{G})$ and  $\theta_1(\mc{G})$.
\begin{lemma} \label{Lemma1}
    For any undirected graph $\mc{G}$ with $n$ vertices, and an integer $k\in [n-1]$,  
    $$\theta_k(\mc{G}) \leq  k \theta_1(\mc{G}).$$
\end{lemma}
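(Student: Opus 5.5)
The plan is to scale an optimal solution of \eqref{SDP} for $k$ colors down by $1/k$ and show that the result is feasible for \eqref{SDP} with $k=1$. Let $X$ be optimal for \eqref{SDP} with parameter $k$ and set $Y \eqdef X/k$. The objective is linear, so $\IP{I_n}{Y} = \theta_k(\mc{G})/k$. Feasibility of $Y$ for $k=1$ therefore gives $\theta_k(\mc{G})/k \leq \theta_1(\mc{G})$, which is the claim.

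Most of the constraints carry over directly under positive scaling. We have $Y_{ij}=0$ for $\{i,j\}\in\mc{E}$ and $Y \geq \mathbf{0}_n$. The bound $\diag{Y} = \diag{X}/k \leq e_n/k \leq e_n$ holds because $k\geq 1$.

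The only substantive check is the PSD constraint for $k=1$,
\[
\begin{pmatrix} 1 & \diag{Y}^\top \\ \diag{Y} & Y \end{pmatrix} \psd \mathbf{0}.
\]
By the Schur complement this is equivalent to $Y - \diag{Y}\diag{Y}^\top \psd \mathbf{0}$. From $\aug{X}\psd\mathbf{0}$, again via the Schur complement (the $(0,0)$ entry is $k>0$), we get $X - \tfrac{1}{k}\diag{X}\diag{X}^\top \psd \mathbf{0}$. Substituting $X = kY$ turns this into $kY - k\,\diag{Y}\diag{Y}^\top \psd \mathbf{0}$. Dividing by $k$ gives exactly the required condition.

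A cleaner way to see the same step is the congruence $\aug{Y}_1 = D\aug{X}D$ with $D = \Diag{(1/\sqrt{k}, 1/\sqrt{k}, \dots, 1/\sqrt{k})}$. This sends the top-left entry $k$ to $1$, the border $\diag{X}$ to $\diag{X}/k$, and the block $X$ to $X/k$. Since congruence preserves positive semidefiniteness, this settles the PSD constraint. No step is a real obstacle; the only point that needs care is that the border and the top-left entry scale consistently, and the congruence makes this explicit.
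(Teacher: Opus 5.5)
Your proposal is correct and is the paper's proof: scale an optimal $X$ for \eqref{SDP} by $1/k$, check that the result is feasible for (SDP$_1$), and use linearity of the objective. The paper only asserts that feasibility, while you verify it. The ``congruence'' you use is just $\frac{1}{k}\aug{X}$, a positive scalar multiple of $\aug{X}$, so positive semidefiniteness is immediate and the Schur complement argument is not needed.
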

\begin{proof}
    The result follows trivially for $k=1$.
    Let $k\geq 2$, and let $X^k$ be an optimal solution to~\eqref{SDP}.
    Let $X^1:= \frac{1}{k}X^k$, then $X^1$ is feasible for (SDP$_1$). 
    Therefore,
    $$\theta_k(\mc{G}) = \IP{I_n}{X^k} = k\IP{I_n}{\frac{1}{k} X^k} =  k\IP{I_n}{X^1} \leq k  \theta_1(\mc{G}),$$
    where the inequality follows from the fact that $X^1$ need not be an optimal solution to (SDP$_1$).
\end{proof}

Next, we relate $\theta_k(\mathcal{G})$ to the chromatic number of graph $\mathcal{G}$, denoted by $\chi(\mathcal{G})$.
\begin{lemma}\label{Lemma2}
    Let $\mc{G}=(\mc{V},\mc{E})$  be a given graph and  $k \in \mathbb{N}$ such that $k\geq \chi( \mc{G} )$, then $\theta_k(\mc{G}) = |\mc{V}|$.
\end{lemma}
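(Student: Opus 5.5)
The plan is to sandwich $\theta_k(\mc{G})$ between two quantities that both equal $|\mc{V}| = n$. For the upper bound, the constraint $\diag{X} \leq e_n$ in \eqref{SDP} immediately gives $\IP{I_n}{X} = \sum_{i\in[n]} X_{ii} \leq n$ for every $X \in \mc{F}_{\eqref{SDP}}$. Hence $\theta_k(\mc{G}) \leq n$.

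For the lower bound, I will use that \eqref{SDP} is a relaxation of \eqref{BSDP}, so $\alpha_k(\mc{G}) \leq \theta_k(\mc{G})$. It then suffices to exhibit a feasible point of \eqref{BSDP} with objective value $n$. Since $k \geq \chi(\mc{G})$, there is a proper coloring of all of $\mc{V}$ using at most $\chi(\mc{G}) \leq k$ colors. I define $P \in \{0,1\}^{n\times k}$ by $P_{ic}=1$ if and only if vertex $i$ receives color $c$. Each row then has exactly one $1$, and adjacent vertices never share a color, so $P$ is feasible for \eqref{BLP} with objective value $n$.

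To make the lifted point explicit, set $X = PP^\top$. By the characterization \eqref{BQP characterization 1}, $X \in \mc{D}^n_k$, and therefore by \eqref{BQP characterization 3} the matrix $\aug{X}$ is PSD and $X$ is binary. Moreover, $X_{ij} = \sum_c P_{ic}P_{jc} = 0$ for every $\{i,j\}\in\mc{E}$, and $X_{ii}=1$ for all $i$. So $X$ is feasible for \eqref{BSDP}, and also for \eqref{SDP}, with $\IP{I_n}{X} = n$. Combining the two bounds gives $\theta_k(\mc{G}) = n$.

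There is no real obstacle here. The only point needing care is the case $k > \chi(\mc{G})$, or even $k \geq n$, where some colors stay unused. This is harmless, because $P$ may have zero columns and \eqref{BQP characterization 1} only requires row sums at most $1$. The restriction $k\in[n-1]$ in the definition of \eqref{SDP} does not affect the argument either.
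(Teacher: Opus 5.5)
Your proof is correct and follows the same approach as the paper: the upper bound comes from $\diag{X} \leq e_n$, and the lower bound from $\alpha_k(\mc{G}) = |\mc{V}| \leq \theta_k(\mc{G})$, which holds because a proper coloring with at most $k$ colors exists. You make explicit the feasible point $X = PP^\top$ that the paper leaves implicit, and your remark that unused colors (zero columns of $P$) cause no problem is also correct.
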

\begin{proof}
    Since  $k\geq \chi( \mc{G} )$, we have $\alpha_k(\mc{G}) = |\mc{V}|$, and therefore 
    $|\mc{V}| \leq  \theta_k(\mc{G})$. 
    The fact that $\theta_k(\mc{G})\leq |\mc{V}|$ trivially follows from the constraint $\diag{X} \leq e_n$.
\end{proof}

We summarize the results of the previous lemmas below.
\begin{corollary} \label{Cor1}
For a given graph $\mc{G}=(\mc{V},\mc{E})$ with $n$ vertices and an integer $k\in [n-1]$, we have
$$\theta_k(\mc{G}) \leq \min \left \{ k \vartheta'(\mc{G}), |\mc{V}|\right \},$$
where $\vartheta'(\mc{G})$ is the Schrijver number.
\end{corollary}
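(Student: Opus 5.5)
The bound is the minimum of two quantities, so it suffices to prove each separately and then take the minimum.

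\textbf{First term.} For $\theta_k(\mc{G}) \leq k\vartheta'(\mc{G})$, I will chain Lemma~\ref{Lemma1} with the identity $\theta_1(\mc{G}) = \vartheta'(\mc{G})$ from \cite{Kuryatnikova2022MkCSProblem}, recalled above. This gives
$$\theta_k(\mc{G}) \leq k\,\theta_1(\mc{G}) = k\,\vartheta'(\mc{G}).$$
The only point to check is that the identity is stated for an arbitrary graph $\mc{G}$, with (SDP$_1$) defined exactly as \eqref{SDP} with $k=1$, so that no hypothesis on $\mc{G}$ is needed. The condition $k\in[n-1]$ is exactly what Lemma~\ref{Lemma1} requires.

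\textbf{Second term.} For $\theta_k(\mc{G}) \leq |\mc{V}|$, I will not invoke Lemma~\ref{Lemma2}, because that lemma assumes $k \geq \chi(\mc{G})$. Instead I will reuse the half of its proof that needs no such assumption. Any $X \in \mc{F}_{\eqref{SDP}}$ satisfies $\diag{X} \leq e_n$, so
$$\IP{I_n}{X} = \sum_{i\in[n]} X_{ii} \leq n = |\mc{V}|.$$
Taking the supremum over the feasible set gives $\theta_k(\mc{G}) \leq |\mc{V}|$. Since \eqref{SDP} is feasible (for example $X=\mathbf{0}_n$) and bounded, $\theta_k(\mc{G})$ is a well-defined real number and this bound makes sense.

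Combining the two bounds gives $\theta_k(\mc{G}) \leq \min\{k\vartheta'(\mc{G}), |\mc{V}|\}$. I expect no real obstacle. The only subtlety is the one just handled: the bound $\theta_k(\mc{G}) \leq |\mc{V}|$ must come from the diagonal constraint directly, not from the statement of Lemma~\ref{Lemma2}.
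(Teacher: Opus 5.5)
Your proposal is correct and follows the paper's route. The paper states the corollary as a summary of Lemma~\ref{Lemma1}, combined with the known identity $\theta_1(\mc{G}) = \vartheta'(\mc{G})$, and of the bound $\theta_k(\mc{G}) \leq |\mc{V}|$ coming from the constraint $\diag{X} \leq e_n$. You are also right to take that second bound from the unconditional part of the proof of Lemma~\ref{Lemma2} rather than from its statement, which assumes $k \geq \chi(\mc{G})$; this makes explicit what the paper's one-line summary leaves implicit.
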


It was shown in~\cite{Sinjorgo2022GenVartheta} that the sequence   
$\left ( \vartheta_k (\mc{G}) \right )_k$ is increasing in $k$.
We show below that  $\left (\theta_k(\mc{G}) \right )_k$ is non-decreasing in $k$.

\begin{lemma}
    For any graph $\mc{G}$ and $k \in \mathbb{N}$, $\theta_k(\mc{G}) \leq \theta_{k+1}(\mc{G}).$
\end{lemma}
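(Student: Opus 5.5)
The plan is to show that the feasible set of \eqref{SDP} for $k$ is contained in the feasible set for $k+1$; the objective $\IP{I_n}{X}$ is the same in both problems. Concretely, take an optimal solution $X$ of (SDP$_k$), which exists because the feasible set is compact: it is bounded via $0 \le X \le J_n$ and closed. We then check that the same $X$ is feasible for (SDP$_{k+1}$). This gives $\theta_k(\mc{G}) = \IP{I_n}{X} \le \theta_{k+1}(\mc{G})$.

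Most constraints carry over unchanged, since they do not involve $k$:
\begin{itemize}
\item $X_{ij}=0$ for $\{i,j\}\in\mc{E}$;
\item $X \geq \mathbf{0}_n$;
\item $\diag{X}\le e_n$.
\end{itemize}
Only the PSD constraint depends on $k$. The augmented matrix for $k+1$ decomposes as
$$\begin{pmatrix} k+1 & \diag{X}^\top \\ \diag{X} & X \end{pmatrix} = \begin{pmatrix} k & \diag{X}^\top \\ \diag{X} & X \end{pmatrix} + \begin{pmatrix} 1 & 0_n^\top \\ 0_n & \mathbf{0}_n \end{pmatrix}.$$
This is a sum of the PSD matrix $\aug{X}$ and a rank-one PSD matrix $e_0e_0^\top$, so it is PSD.

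There is no real obstacle; the only point requiring care is noting that increasing the top-left entry preserves positive semidefiniteness. One could alternatively argue via a Schur complement, $X - \frac{1}{k}\diag{X}\diag{X}^\top \succeq \mathbf{0}$ implies the same with $\frac{1}{k+1}$. That route needs $k>0$, which holds, but the decomposition above avoids it entirely.

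If $k+1 > n-1$, so that (SDP$_{k+1}$) falls outside the stated range $k\in[n-1]$, the definition still makes sense. Alternatively, Lemma~\ref{Lemma2} gives $\theta_{k+1}(\mc{G}) = n \ge \theta_k(\mc{G})$ there, since $k+1 \ge n \ge \chi(\mc{G})$.
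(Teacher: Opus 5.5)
Your proposal is correct and follows the paper's proof: both take an optimal solution of (SDP$_k$) and note that it stays feasible for (SDP$_{k+1}$). Your decomposition $\aug{X} + e_0e_0^\top$ supplies the positive semidefiniteness check that the paper leaves as ``not difficult to verify,'' and your remarks on attainment of the optimum and on the case $k+1 > n-1$ add care that the paper does not spell out.
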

\begin{proof}
    Let $X^{k}$ be an optimal solution to \eqref{SDP}.
    It is not difficult to verify that $X^{k}$ is also a feasible solution to the SDP relaxation $(SDP_{k+1})$.
    Therefore, $\theta_k(\mc{G}) =  \IP{I_n}{X^k} \leq \theta_{k+1}(\mc{G})$.
\end{proof}

The following summarizes the relationships among several graph parameters:
\begin{align}  \label{summarize inequalities}
    \alpha_k(\mc{G}) \leq \theta_k(\mc{G}) \leq  k \vartheta'(\mc{G}) \leq k \vartheta(\mc{G}) \leq  \chi_k( \overline{\mc{G}}),
\end{align}
where $\overline{\mc{G}}$ denotes the complement graph of $\mc{G}$, and 
$\chi_k(\overline{\mc{G}})$ the minimum number of colors needed for a valid $k$-multicoloring of $ \overline{\mc{G}}$.
Recall that a valid $k$–multicoloring of a graph is an assignment of $k$ distinct colors to each vertex in the graph such that two adjacent vertices are assigned disjoint sets of colors.
Clearly, $\chi_1(\mc{G}) = \chi(\mc{G}).$
While the first three inequalities in \eqref{summarize inequalities} follow from the previous discussion, for the proof of the last inequality, see e.g., \cite{Sinjorgo2022GenVartheta}.

Note that the parameter $\chi_k( \overline{\mc{G}})$ can be larger than the number of vertices in $\mc{G}$, while $\alpha_k(\mc{G})$ and  $\theta_k(\mc{G})$ are bounded above by $n$.
Therefore, one may ask whether there is a more appropriate graph parameter than $\chi_k( \overline{\mc{G}})$ that serves as an upper bound on $\theta_k(\mc{G})$, thereby forming a sandwich relation. 
We consider here the following parameter
\begin{align}\label{Psik}
   \Psi_k({\mc{G}}) := \min \left \{  k  \chi(\mc{G} -\mc{S}) + |\mc{S}| \,:\, \mc{S} \subseteq \mc{V} \right \},
\end{align}
that was studied by \citet{Greene1976}, and \citet{GreeneKleitman1976}.
Here, $\mc{G} - \mc{S}$ denotes the subgraph of $\mc{G}$ induced by $\mc{V} \setminus \mc{S}$.

Clearly, $ \Psi_k({\mc{G}}) \leq n$ for any $k\geq 1$, and $\Psi_1({\mc{G}})=  \chi({\mc{G}})$.
\citet{Narasimhan1989ThesisMkCSProblem} also proposed replacing $\chi_k(\overline{\mc{G}})$ with $\Psi_k(\overline{\mc{G}})$
in the sandwich inequality $\alpha_k(\mc{G}) \leq \vartheta_k(\mc{G}) \leq  \chi_k(\overline{\mc{G}})$, but they were unable to prove that $\vartheta_k({\mc{G}}) \leq  \Psi_k(\overline{\mc{G}})$.
\citet{Oliveira2024} introduced a spectral nonconvex graph parameter, which is sandwiched between $\alpha_k(\mc{G})$ and $\Psi_k( \overline{\mc{G}})$ and is difficult to compute.
In the sequel, we show that $\theta_k(\mc{G})$ is bounded above by  $\Psi_k(\overline{\mc{G}})$, resulting in the desired sandwich relation.
Let us first note that
$$ \Psi_k(\overline{\mc{G}}) =  \min \left \{  k  \chi( \overline{\mc{G}} -\mc{S}) + |\mc{S}| \,:\, \mc{S} \subseteq \mc{V} \right \} = 
\min \left \{  k  \overline{\chi}( {\mc{G} -\mc{S}}) + |\mc{S}| \,:\, \mc{S} \subseteq \mc{V} \right \}, $$
where $\overline{\chi}(\mc{G})$ denotes the clique cover number of $\mc{G}$. 
Here we exploit the fact that $\overline{\chi}(\mc{H}) = \chi(\overline{\mc{H}})$ for any graph~$\mc{H}$.
Therefore, $\Psi_k(\overline{\mc{G}})$ may be seen as a generalization of the clique cover number.
We therefore refer to the minimization problem above as the $k$-clique cover problem.

First, we prove the following result.
\begin{proposition}\label{prop:compareTheta3}
    Let $\mc{G}=(\mc{V},\mc{E})$  be a graph with $n$ vertices, and let $\mc{S} \subseteq \mc{V}$. Then,  
    $$ \theta_k(\mc{G}) \leq \theta_k(\mc{G} -\mc{S}) + |\mc{S}|,$$
    for any $k \in [n-1]$.
\end{proposition}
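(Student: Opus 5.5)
Take an optimal solution $X$ of \eqref{SDP} for $\mc{G}$ and restrict it to the vertices outside $\mc{S}$. Let $\mc{T} = \mc{V}\setminus\mc{S}$ and let $Y = X[\mc{T}]$ be the principal submatrix of $X$ indexed by $\mc{T}$. We will show that $Y$ is feasible for \eqref{SDP} on $\mc{G}-\mc{S}$. Once that is done, the bound follows by splitting the trace:
\[
\theta_k(\mc{G}) = \IP{I_n}{X} = \sum_{i\in\mc{T}} X_{ii} + \sum_{i\in\mc{S}} X_{ii} \leq \IP{I}{Y} + |\mc{S}| \leq \theta_k(\mc{G}-\mc{S}) + |\mc{S}|.
\]
The first inequality uses $\diag{X}\leq e_n$, and the second uses feasibility of $Y$.

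\textbf{Feasibility of $Y$.} We check each constraint of \eqref{SDP} in turn.
\begin{enumerate}
\item Edge constraints: every edge of $\mc{G}-\mc{S}$ is an edge of $\mc{G}$ with both endpoints in $\mc{T}$. Hence $Y_{ij} = X_{ij} = 0$ for those edges.
\item Nonnegativity and diagonal bounds: $Y \geq \mathbf{0}$ and $\diag{Y} \leq e$ are inherited entrywise from $X$.
\item Semidefiniteness: the augmented matrix $\begin{psmallmatrix} k & \diag{Y}^\top \\ \diag{Y} & Y\end{psmallmatrix}$ is exactly the principal submatrix of $\aug{X}$ indexed by $\{0\}\cup\mc{T}$. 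Principal submatrices of a PSD matrix are PSD, so this constraint holds.
\end{enumerate}

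\textbf{Degenerate cases.} If $\mc{T}$ is empty, the statement reduces to $\theta_k(\mc{G}) \leq n$, which holds by the bound $\diag{X}\leq e_n$. The range condition $k\in[n-1]$ may fail for the smaller graph $\mc{G}-\mc{S}$. We handle this by reading \eqref{SDP} for $\mc{G}-\mc{S}$ as defined for any $k$; in that case the argument of Lemma~\ref{Lemma2} gives the value $|\mc{T}|$, and the inequality still holds trivially.

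No step poses a real obstacle. The only point needing care is recognising that the augmented matrix of $Y$ is a principal submatrix of $\aug{X}$. This works because the border of $\aug{X}$ depends only on diagonal entries, and those are preserved under restriction to $\mc{T}$.
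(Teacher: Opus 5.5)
Your proof is correct and is essentially the paper's argument: you restrict an optimal $X$ to $\mc{T}=\mc{V}\setminus\mc{S}$, check feasibility for the smaller relaxation, and split the trace using $\diag{X}\leq e_n$. Your observation that the bordered matrix is the principal submatrix of $\aug{X}$ indexed by $\{0\}\cup\mc{T}$ is exactly the detail the paper leaves as ``not difficult to verify.'' Your handling of $\mc{T}=\emptyset$ and of $k\geq|\mc{T}|$ via Lemma~\ref{Lemma2} mirrors the paper's case distinction.
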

\begin{proof}
    If $\mc{S}=\mc{V}$, the inequality follows trivially.
    Let us therefore assume that $\mc{S} \neq \mc{V}$. 
    Let $X$ be an optimal solution to \eqref{SDP}, and let $X_{\mc{S}}$ and $X_{\mc{V}- \mc{S}}$ be the submatrices of $X$ indexed by the vertices in the set $\mc{S}$ and $\mc{V}\setminus \mc{S}$, respectively.
    It is not difficult to verify that $X_{\mc{V}- \mc{S}}$ is feasible for the SDP relaxation \eqref{SDP} induced by $\mc{V}\setminus \mc{S}$.
    Now, we consider the following two cases.
    
    Assume that  $k < n-|\mc{S}|$. Then, 
    $$ \theta_k(\mc{G}) = \IP{I_n}{X}  = \IP{I_{n-|\mc{S}|}}{X_{\mc{V}- \mc{S}}} + 
    \IP{I_{|\mc{S}|}}{X_{\mc{S}}} \leq \theta_k(\mc{G}-\mc{S}) + |\mc{S}|. $$
    Assume now that $k\geq n-|\mc{S}|$. 
    Then, we proceed similarly as above:
    $$ \theta_k(\mc{G}) = \IP{I_n}{X} \leq \theta_k(\mc{G}-\mc{S}) + |\mc{S}| = n-|\mc{S}| + |\mc{S}| = n, $$
    where we exploited that $\theta_k(\mc{G}-\mc{S}) = n-|\mc{S}|$ since $k\geq n-|\mc{S}|$. 
\end{proof}

We are now ready to prove our main result.
\begin{theorem}
    Let $\mc{G}=(\mc{V},\mc{E})$ be a graph with $n$ vertices, and let $k \in [n-1]$ be an integer. Then, 
    $$\theta_k(\mc{G}) \leq  \Psi_k(\overline{\mc{G}}).$$
\end{theorem}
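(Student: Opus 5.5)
Fix an optimal set $\mc{S}\subseteq\mc{V}$ attaining the minimum in $\Psi_k(\overline{\mc{G}}) = \min\{k\,\overline{\chi}(\mc{G}-\mc{S}) + |\mc{S}|\}$. By Proposition~\ref{prop:compareTheta3}, $\theta_k(\mc{G}) \leq \theta_k(\mc{G}-\mc{S}) + |\mc{S}|$. It therefore suffices to show that for every graph $\mc{H}$ (here $\mc{H}=\mc{G}-\mc{S}$),
$$\theta_k(\mc{H}) \leq k\,\overline{\chi}(\mc{H}).$$
Two degenerate cases need a remark. If $\mc{S}=\mc{V}$ the claim is immediate from $\theta_k(\mc{G})\le n$. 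If $k$ is not less than the number of vertices of $\mc{H}$, then $\theta_k(\mc{H})$ equals $|V(\mc{H})|$, which is at most $k\,\overline{\chi}(\mc{H})$ trivially. So we may assume $k$ lies in the admissible range for $\mc{H}$.

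To prove $\theta_k(\mc{H})\le k\,\overline{\chi}(\mc{H})$, take a minimum clique cover $C_1,\dots,C_m$ of $\mc{H}$ with $m=\overline{\chi}(\mc{H})$; we may take the cliques to be disjoint. Let $X$ be feasible for (SDP$_k$) on $\mc{H}$. The plan is to show that for each clique $C$,
$$\sum_{i\in C} X_{ii} \le k,$$
and then sum over the $m$ cliques to get $\IP{I}{X}\le km$.

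For a clique $C$, all off-diagonal entries $X_{ij}$ with $i,j\in C$ vanish because every such pair is an edge. Apply $\aug{X}\psd\mathbf{0}$ with the vector $a=\begin{psmallmatrix}t\\ -\mathbbm{1}_C\end{psmallmatrix}$, generalizing the proof of the earlier lemma. This gives
$$kt^2 - 2t\sum_{i\in C}X_{ii} + \sum_{i\in C}X_{ii}\ \ge 0 \quad\text{for all } t.$$
With $s=\sum_{i\in C}X_{ii}$, the discriminant condition yields $s^2\le ks$, so $s\le k$. Equivalently, by a Schur complement argument, $\mathrm{Diag}(x_C)-x_Cx_C^\top/k\psd\mathbf{0}$, and testing it with the all-ones vector gives the same bound.

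No step looks like a real obstacle. The main point is to notice that the per-clique bound follows from the $\aug{X}$ constraint together with the vanishing of entries on edges; diagonal bounds and nonnegativity are not needed. The bookkeeping on the range of $k$ in Proposition~\ref{prop:compareTheta3} is handled by the degenerate cases above. Alternatively, one can skip the proposition entirely: bound $\sum_{i\in\mc{S}}X_{ii}\le|\mc{S}|$ using $\diag{X}\le e_n$, and bound the clique sums as above directly on $X$ for $\mc{G}$.
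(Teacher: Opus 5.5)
Your proof is correct, but it reaches the key bound $\theta_k(\mc{H}) \le k\,\overline{\chi}(\mc{H})$ by a different route than the paper.

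\textbf{The paper's route.} After invoking Proposition~\ref{prop:compareTheta3}, the paper chains Lemma~\ref{Lemma1} ($\theta_k \le k\theta_1$) with facts from outside the paper: $\theta_1 = \vartheta' \le \vartheta$, and the Lov\'asz sandwich theorem $\vartheta(\mc{H}) \le \overline{\chi}(\mc{H})$. It handles the range of $k$ by a three-way case split: $\mc{S}=\mc{V}$, $k<n-|\mc{S}|$, and $k\ge n-|\mc{S}|$.

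\textbf{Your route.} You partition $\mc{H}$ into cliques and bound each clique's diagonal mass by $k$ directly. This uses only $\aug{X}\psd \mathbf{0}$ and the vanishing of $X$ on edges. It is exactly the computation in the paper's own Lemma~\ref{Redundant clique inequalities}. There, $t=1$ already gives $k-\sum_{i\in C}X_{ii}\ge 0$, so the discriminant step is not needed.

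\textbf{What your route buys.} It is self-contained and more elementary, relying on nothing from outside the paper. Your closing variant gives a one-line proof valid for every $\mc{S}$ at once: bound $\sum_{i\in\mc{S}}X_{ii}\le|\mc{S}|$, and bound the clique sums of a clique partition of $\mc{G}-\mc{S}$ directly on a feasible $X$ for $\mc{G}$. This makes Proposition~\ref{prop:compareTheta3} and all case distinctions unnecessary, including your degenerate cases. Indeed, the clique argument never needs $k$ to lie in any admissible range.

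\textbf{What the paper's route buys.} It proves a stronger intermediate statement: $\theta_k(\mc{G}) \le k\,\vartheta'(\mc{G}-\mc{S}) + |\mc{S}|$. This places $\theta_k$ below a Schrijver-number analogue of $\Psi_k(\overline{\mc{G}})$, which can be much smaller than the clique-cover quantity.
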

\begin{proof}
We show that $\theta_k(\mc{G}) \leq k  \overline{\chi}({\mc{G} -\mc{S}}) + |\mc{S}|$ for all $\mc{S} \subseteq \mc{V}$.
To do that we consider three cases.
Assume first that $\mc{S}=\mc{V}$. Then, $k \overline{\chi}({\mc{G}} -\mc{S}) + |\mc{S}| =  k\cdot 0 +n$, from where it follows $\theta_k(\mc{G}) \leq n$.

Let us now assume that $\mc{S}\subseteq \mc{V}$ and $k < n-|\mc{S}|$.
Then,
$$\theta_k(\mc{G}) \leq \theta_k(\mc{G} -\mc{S}) + |\mc{S}| 
\leq k    \theta_1(\mc{G} - \mc{S})  + |\mc{S}|
\leq k  \overline{\chi}({\mc{G} - \mc{S}}) + |\mc{S}|,$$
where we used Proposition~\ref{prop:compareTheta3} for the first inequality, and Lemma~\ref{Lemma1} for the second inequality. 
The last inequality follows from the fact that $\theta_1(\mc{G} -\mc{S}) = \vartheta'(\mc{G} -\mc{S}) \leq \vartheta(\mc{G} -\mc{S})$ and the well-known Lov\'asz sandwich theorem 
$\alpha(\mc{H})\leq \vartheta(\mc{H})\leq \chi (\overline{\mc{H}})$ for any graph $\mc{H}$, see also \eqref{summarize inequalities} for $k=1$.

Assume now that $\mc{S}\varsubsetneq \mc{V}$ and $k \geq n-|\mc{S}|$. Then,
$$\theta_k(\mc{G}) \leq n = n-|\mc{S}| + |\mc{S}| \leq k  \overline{\chi}({\mc{G} - \mc{S}}) + |\mc{S}|,$$
where we exploit Proposition~\ref{prop:compareTheta3} and the fact that $\overline{\chi}({\mc{G} - \mc{S}})\geq 1$.
\end{proof}

Finally, we can establish the following sandwich relation 
$$ \alpha_k(\mc{G}) \leq \theta_k(\mc{G}) \leq  \Psi_k(\overline{\mc{G}}),$$
for $k\in [n-1]$.
Since $\theta_k(\mc{G}) $ can be computed in polynomial time up to fixed precision, it follows  that the M$k$CS problem is also solvable in polynomial time for any graph for which $\alpha_k(\mc{G})= \Psi_k(\overline{\mc{G}})$.
In particular, this is true for $k$-perfect graphs. 
Recall that a graph $\mc{G}$ is called $k$-perfect if $\alpha_k ( \mc{G}[\mc{S}]) =  \Psi_k(\overline{\mc{G}[\mc{S}]})$ holds for all $\mc{S}\subseteq \mc{V}$ \cite{lovasz1983perfect}.
A $k$-perfect graph for $k=1$ is known as a perfect graph. 
Perfect graphs were introduced by Claude Berge in the early sixties, and the Strong Perfect Graph Theorem by \citet{Chudnovsky2006SPGT} fully characterizes perfect graphs.
On the other hand, only a few classes of $k$-perfect graphs for $k>1$ have been identified. 
\citet{GreeneKleitman1976} proved that comparability graphs are $k$-perfect for every integer $k\geq 1$, and so are their complements, as shown by \citet{Greene1976}. 
A graph that is $k$-perfect for every integer $k\geq 1$ is called a totally perfect graph.
\citet{lovasz1983perfect} showed that line graphs of bipartite graphs are totally perfect, and \citet{Berge1992} proved that all balanced graphs are totally perfect.

Our result resolves the problem of finding an efficient algorithm for computing $\alpha_k(\mc{G}) $ and $\Psi_k(\overline{\mc{G}})$ for $k$-perfect graphs where $k>1$.
\citet{Grtschel1984PolynomialAF} showed that there is a polynomial time algorithm for computing these parameters for perfect graphs, thus for the case $k=1$.
To the best of our knowledge, no such result was previously known for $k>1$.
\begin{theorem}
    Let $\mc{G}$  be a $k$-perfect graph and $k\geq 1$.  
    Then, $\alpha_k(\mc{G})$ and $\Psi_k(\overline{\mc{G}})$ can be computed in polynomial time up to fixed precision.
\end{theorem}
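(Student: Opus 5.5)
The plan is to combine the sandwich relation $\alpha_k(\mc{G}) \leq \theta_k(\mc{G}) \leq \Psi_k(\overline{\mc{G}})$ with the polynomial-time solvability of \eqref{SDP} up to fixed precision.

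First, since $\mc{G}$ is $k$-perfect, the defining condition $\alpha_k(\mc{G}[\mc{S}]) = \Psi_k(\overline{\mc{G}[\mc{S}]})$ holds for every $\mc{S}\subseteq\mc{V}$. In particular it holds for $\mc{S}=\mc{V}$, which gives $\alpha_k(\mc{G}) = \Psi_k(\overline{\mc{G}})$. Together with the first inequality in \eqref{summarize inequalities} and the preceding theorem ($\theta_k(\mc{G})\leq\Psi_k(\overline{\mc{G}})$), this collapses the sandwich to
$$\alpha_k(\mc{G}) = \theta_k(\mc{G}) = \Psi_k(\overline{\mc{G}}).$$
For $k\geq n$ the trivial cases apply: both parameters equal $n$, by Lemma~\ref{Lemma2} and the bound $\Psi_k\leq n$.

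Second, I must argue that $\theta_k(\mc{G})$ can be computed to any fixed precision $\eps>0$ in time polynomial in $n$ and $\log(1/\eps)$. The relevant facts are the following.
\begin{itemize}
\item \eqref{SDP} has $O(n^2)$ variables, one PSD constraint of order $n+1$, and polynomially many linear constraints.
\item The feasible set is bounded, since $0\leq X\leq J_n$ as noted after \eqref{SDP}.
\item By \cite{Kuryatnikova2022MkCSProblem}, the Slater condition holds, so a strictly feasible point exists.
\end{itemize}
With a ball inside (in the affine hull determined by the equalities $X_{ij}=0$) and a ball containing the feasible set, both of polynomially bounded radii, the ellipsoid method \cite{Grtschel1984PolynomialAF}, or an interior point method, returns a value within $\eps$ of $\theta_k(\mc{G})$ in polynomial time. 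For the inner ball, I would exhibit an explicit strictly feasible point such as $X=\delta I_n+\delta^2 J'$, with $J'$ supported on non-edges and $\delta$ a small rational like $1/(2n)$, and check directly that its radius is polynomially bounded.

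Finally, because $\alpha_k(\mc{G})$ is an integer, an approximation with $\eps<1/2$ rounds to the exact value. The same number equals $\Psi_k(\overline{\mc{G}})$.

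I expect the main obstacle to be the bookkeeping in the precision step: verifying the inner and outer radii needed for the ellipsoid method's complexity guarantee. I would handle it by citing the standard result for SDPs with explicit bounded, strictly feasible sets, rather than reproving it.
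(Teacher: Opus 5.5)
Your proposal is correct and follows the paper's argument: $k$-perfectness at $\mc{S}=\mc{V}$ collapses the sandwich $\alpha_k(\mc{G}) \leq \theta_k(\mc{G}) \leq \Psi_k(\overline{\mc{G}})$, and the fixed-precision polynomial-time computability of $\theta_k(\mc{G})$ finishes it. Your ellipsoid-method details (bounded feasible set, explicit strictly feasible point, inner and outer radii) and the rounding step, which uses integrality of $\alpha_k(\mc{G})$, spell out what the paper only asserts.
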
 

\section{Valid inequalities} \label{Sec: Valid inequalities}
In this section, we derive several families of valid inequalities that strengthen \eqref{SDP}.
We first propose generalized BQP inequalities (Section~\ref{Sec: Boolean quadric polytope inequalities}), and then derive generalized rank inequalities (Section~\ref{Sec: Rank inequalities}). 
We further consider computationally tractable special cases of our new valid inequalities that concern triangles (Section~\ref{Sec: Triangle inequalities}), cliques (Section \ref{Sec: Clique inequalities}) and odd holes (Section~\ref{Sec: Odd-hole inequalities}).

\subsection{Generalized Boolean quadric polytope inequalities} \label{Sec: Boolean quadric polytope inequalities}
We first present valid inequalities that generalize the \textit{cut inequalities} and \textit{clique inequalities} of \citet{Padberg1989BQP} for the BQP. 
Namely, we prove that the following inequalities are valid for the set $\mc{D}^n_k$, where $k \in [n-1]$:
\begin{alignat}{3}
    & \sum_{i \in \mc{S}}\sum_{j \in \mc{S}'} X_{ij} \leq \sum_{i \in \mc{S}'}X_{ii} + \sum_{i,j \in \mc{S}: i<j} X_{ij} + \sum_{i,j \in \mc{S}': i<j} X_{ij} &\quad& \forall \mc{S}, \mc{S}' \subseteq \mc{V} \text{ with } \mc{S} \cap \mc{S}' = \emptyset \label{eq BQP1} \\
    & \sum_{i \in \mc{S}} X_{ii} \leq \sum_{i,j \in \mc{S}: i<j} X_{ij} + k &\quad& \forall \mc{S} \subseteq \mc{V}. \label{eq BQP2}
\end{alignat}
Inequalities~\eqref{eq BQP1} coincide with Padberg's cut inequalities, and inequalities~\eqref{eq BQP2} coincide with a subset of Padberg's clique inequalities when $k=1$.
Therefore, we will further refer to inequalities~\eqref{eq BQP1} and~\eqref{eq BQP2} as the \textit{generalized BQP inequalities}, whose validity we prove next.
\begin{theorem}\label{thm:bqp1}
    Inequalities~\eqref{eq BQP1} and~\eqref{eq BQP2} are valid for $\mc{D}^n_k$, 
    where $k \in [n-1]$.
\end{theorem}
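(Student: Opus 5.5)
Both inequalities are linear in $X$, and every $X\in\mc{D}^n_k$ has the form $X=PP^\top$ with $P\in\{0,1\}^{n\times k}$ and row sums at most one. My plan is therefore to decompose $X$ by colour classes. For each $c\in[k]$, let $\mc{C}_c\eqdef\{i\in[n]:P_{ic}=1\}$. These classes are pairwise disjoint, and
\[X=\sum_{c\in[k]}\mathbbm{1}_{\mc{C}_c}\mathbbm{1}_{\mc{C}_c}^\top.\]
So $X_{ii}=\sum_c [i\in\mc{C}_c]$ and $X_{ij}=\sum_c[i\in\mc{C}_c][j\in\mc{C}_c]$. Each side of \eqref{eq BQP1} and \eqref{eq BQP2} then becomes a sum over $c$ of quantities depending only on $a_c\eqdef|\mc{S}\cap\mc{C}_c|$ and $b_c\eqdef|\mc{S}'\cap\mc{C}_c|$. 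This reduces both statements to elementary inequalities about nonnegative integers.

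For \eqref{eq BQP1}, colour $c$ contributes $a_cb_c$ to the left-hand side. It contributes $b_c+\binom{a_c}{2}+\binom{b_c}{2}$ to the right-hand side. It therefore suffices to prove, for all integers $a,b\ge 0$,
\[ab\le b+\tfrac{a(a-1)}{2}+\tfrac{b(b-1)}{2}.\]
This is equivalent to $(a-b)^2-(a-b)\ge 0$, i.e.\ $m(m-1)\ge0$ with $m=a-b\in\mathbb{Z}$, which holds for every integer $m$. Summing over $c$ gives \eqref{eq BQP1}. This is exactly Padberg's cut inequality argument applied colourwise, which works because $\mc{D}^n_1$ is the BQP vertex set.

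For \eqref{eq BQP2}, colour $c$ contributes $a_c$ to the left-hand side and $\binom{a_c}{2}$ to the pair sum. Since $a-\binom{a}{2}\le 1$ for every integer $a\ge0$ (equality at $a=1,2$, and the value is $0$ at $a=0$), summing over the $k$ colours gives
\[\sum_{i\in\mc{S}}X_{ii}-\sum_{i<j\in\mc{S}}X_{ij}=\sum_c\Bigl(a_c-\tbinom{a_c}{2}\Bigr)\le k,\]
which is \eqref{eq BQP2}.

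The only point requiring care is the bookkeeping in the colourwise decomposition. Disjointness of $\mc{S}$ and $\mc{S}'$ guarantees that the cross sum $\sum_{i\in\mc{S}}\sum_{j\in\mc{S}'}X_{ij}$ counts each cross pair exactly once. The row-sum constraint on $P$ guarantees that each vertex lies in at most one colour class, so diagonal entries are $0/1$ and the per-colour counts add up correctly. Beyond this, the two integer inequalities are routine.
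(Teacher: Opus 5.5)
Your proposal is correct and is essentially the paper's proof. The paper also reduces colourwise to the integer inequalities $ab\le b+\binom{a}{2}+\binom{b}{2}$ and $a\le\binom{a}{2}+1$, applied to the per-colour counts, and then sums over the $k$ colours. One small remark: the row-sum constraint on $P$ is not actually needed, as the paper itself notes, because $X_{ii}=\sum_c P_{ic}^2=\sum_c P_{ic}$ and $X=\sum_c\mathbbm{1}_{\mc{C}_c}\mathbbm{1}_{\mc{C}_c}^\top$ hold for any binary $P$.
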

\begin{proof}
    Let $X \in \mc{D}^n_k$, which implies that there exists a matrix $P \in \{0,1\}^{n \times k}$ such that $X = PP^\top$. 
    
    First, we prove the validity of inequality~\eqref{eq BQP1} for $\mc{S}, \mc{S}' \subseteq \mc{V}$ with $\mc{S} \cap \mc{S}' = \emptyset$.
    Let $c \in [k]$. 
    It can easily be shown that $xy \leq y + \binom{x}{2} +
    \binom{y}{2}$ for all $x, y \in \mathbb{N}_0 {\eqdef \mathbb{N} \cup \{0\}}$. 
    Taking $x = \sum_{i \in \mc{S}} P_{ic} \in \mathbb{N}_0$ and $y = \sum_{i \in \mc{S}'} P_{ic} \in \mathbb{N}_0$, we obtain 
    $$\sum_{i \in \mc{S}} P_{ic} \sum_{j \in \mc{S}'} P_{jc} \leq \sum_{i \in \mc{S}'} P_{ic} + \binom{\sum_{i \in \mc{S}} P_{ic}}{2} + \binom{\sum_{i \in \mc{S}'} P_{ic}}{2} = \sum_{i \in \mc{S}'} P_{ic} + \sum_{i,j \in \mc{S}: i < j} P_{ic}P_{jc} + \sum_{i,j \in \mc{S}': i < j} P_{ic}P_{jc},$$
    where we also use binarity of $P$ to get $\binom{\sum_{i \in \mc{S}} P_{ic}}{2} = \sum_{i,j \in \mc{S}, i < j} P_{ic} P_{jc}$ and $\binom{\sum_{i \in \mc{S}'} P_{ic}}{2} = \sum_{i,j \in \mc{S}', i < j} P_{ic} P_{jc}$.
    Summing all $k$ such inequalities for $c \in [k]$, we obtain 
    $$\sum_{i \in \mc{S}} \sum_{j \in \mc{S}'} \sum_{c \in [k]} P_{ic}P_{jc} \leq  \sum_{i \in \mc{S}'}\sum_{c \in [k]} P_{ic} + \sum_{i,j \in \mc{S}: i < j} \sum_{c \in [k]} P_{ic}P_{jc} + \sum_{i,j \in \mc{S}': i < j} \sum_{c \in [k]} P_{ic}P_{jc}.$$ 
    Finally, we can substitute $X_{ii} = \sum_{c \in [k]} P_{ic}$ for each $i \in [n]$ and $X_{ij} = \sum_{c \in [k]} P_{ic}P_{jc}$ for each $i, j \in [n]$ ($i \neq j$) to obtain inequality~\eqref{eq BQP1} for $\mc{S}$, $\mc{S}'$.  

    Next, we prove the validity of inequality~\eqref{eq BQP2} for $\mc{S} \subseteq \mc{V}$ in a similar fashion.
    For each $c \in [k]$, we have that
    $$\sum_{i \in \mc{S}} P_{ic} \leq \binom{\sum_{i \in \mc{S}} P_{ic}}{2} + 1 = \sum_{i, j \in \mc{S}: i < j} P_{ic} P_{jc} + 1,$$
    where we used that $x \leq \binom{x}{2} + 1$ for all $x \in \mathbb{N}_0$. 
    Summing all $k$ such inequalities for $c \in [k]$, and replacing the $P$-variables by the $X$-variables, we obtain inequality~\eqref{eq BQP2} for $\mc{S}$.
\end{proof}
Note that in the proof of Theorem~\ref{thm:bqp1} we did not require that $\sum_{c \in [k]}P_{ic} \leq 1$ for all $i \in [n]$.
Furthermore, it is not difficult to verify that inequalities~\eqref{eq BQP2} for sets $\mc{S} \subseteq \mc{V}$ with $|\mc{S}| \leq k$ are already implied by bound constraints~\eqref{eq bounds}.

\subsubsection{Generalized triangle inequalities} \label{Sec: Triangle inequalities}
The exponential number of generalized BQP inequalities makes it intractable to include all of them as cutting planes for strengthening \eqref{SDP}.
Therefore, in practice, one might restrict to the following generalized BQP inequalities that involve only three vertices:
\begin{alignat}{3}
    &X_{i\ell} + X_{j\ell} \leq X_{\ell\ell} + X_{ij}  &\quad& \forall \{i,j,\ell\} \subseteq \mc{V} \label{eq T1} \\
    &X_{ii} + X_{jj} + X_{\ell\ell} \leq X_{ij} + X_{i\ell} + X_{j\ell} + k &\quad& \forall \{i,j,\ell\} \subseteq \mc{V}. \label{eq T2}
\end{alignat}
Inequalities~\eqref{eq T1} follow from inequalities~\eqref{eq BQP1} with $\mc{S} = \{i,j\}$ and $\mc{S}' = \{\ell\}$, and inequalities~\eqref{eq T2} follow from inequalities~\eqref{eq BQP2} with $\mc{S} = \{i,j,\ell\}$. 
Inequalities~\eqref{eq T1}, and inequalities~\eqref{eq T2} for $k=1$, are known as \textit{triangle inequalities}. 
They were introduced in \cite{Padberg1989BQP} and have been used to strengthen various relaxations, including SDP relaxations for the stable set problem \citep{Gruber2003SSRelaxations} and the strongest known SDP relaxation for the M$k$CS problem \citep{Kuryatnikova2022MkCSProblem}.
We also considered the generalized BQP inequalities that involve only two vertices. 
However, preliminary experiments showed that it is not beneficial to add those to \eqref{SDP}.

\subsection{Generalized rank inequalities} \label{Sec: Rank inequalities}
In this section, we introduce four sets of inequalities that generalize the rank inequalities proposed by \citet{Nemhauser1974PropertiesPolyhedra}, see also \cite{Letchford2020CliqueAndNodalInequalities}. 
We extend these inequalities from BLP to BSDP, and from $k=1$ (i.e., the stable set problem) to arbitrary $k \in [n-1]$.
Specific types of rank inequalities were already extended to semidefinite programs in \cite{Pucher2023SSAndColoringBounds, Pucher2025PracticalExperience}.  
These works focus on the M$k$CS for $k=1$ and subgraphs inducing cliques, odd holes, and odd antiholes, while we consider arbitrary values of $k$ and general induced subgraphs.

To state our \textit{generalized rank inequalities}, we require the following definition.
\begin{definition} 
    Let $\mc{S} \subseteq \mc{V}$ be a subset of vertices  and let $\kappa \in [k]$ be a number of colors.
    The $\kappa$-rank of $\mc{S}$ is defined as $r_\kappa(\mc{S}) \eqdef \alpha_\kappa(\mc{G}[\mc{S}])$, which is the maximum number of vertices that can be colored in $\mc{G}[\mc{S}]$ with at most $\kappa$ colors. 
    We also define $r_0(\mc{S}) \eqdef 0$.
\end{definition}
The $1$-rank of a set coincides with the notion of rank used in the rank inequalities for the stable set problem \cite{Letchford2020CliqueAndNodalInequalities, Nemhauser1974PropertiesPolyhedra}.
Throughout this section, we further use that for any feasible solution $X \in \mc{F}_{\eqref{BSDP}}$ and subset of vertices $\mc{S} \subseteq \mc{V}$, the sum $\sum_{i \in \mc{S}} X_{ii}$ gives the number of vertices in $\mc{S}$ that are colored, the sum $\sum_{i,j \in \mc{S}: i < j} X_{ij}$ gives the number of pairs of vertices in $\mc{S}$ that have the same color, and the sum $\sum_{i \in \mc{S}}X_{i\ell}$ gives the number of vertices in $\mc{S}$ that have the same color as some external vertex $\ell \in \mc{V} \setminus \mc{S}$.

Our first class of generalized rank inequalities for $\mc{F}_\eqref{BSDP}$ follows directly from the definition of~$r_k(S)$:
\begin{alignat}{3}
    & \sum_{i \in \mc{S}} X_{ii} \leq r_k(\mc{S}) &\quad& \forall \mc{S} \subseteq \mc{V}. \label{eq rankInternal1}
\end{alignat}

The next inequalities provide an upper bound on the number of  pairs of vertices in a set $\mc{S} \subseteq \mc{V}$ that may be colored with the same color, when using the available $k$ colors:
\begin{alignat}{3}
    & \sum_{i,j \in \mc{S}: i < j} X_{ij} \leq \sum_{\kappa=1}^k\binom{r_\kappa(\mc{S})-r_{\kappa-1}(\mc{S})}{2} &\quad& \forall \mc{S} \subseteq \mc{V}. \label{eq rankInternal2}
\end{alignat}
To prove the validity of these inequalities, we use the following proposition.
\begin{proposition}[Proposition B.2.~of \cite{Marshall2011Majorization}] \label{Weak majorization lemma}
    Let $f$ be a continuous non-decreasing convex function, and let $x = (x_i)_{i=1}^n$ and $y = (y_i)_{i=1}^n$ be two sequences such that $y$ weakly majorizes $x$. 
    This means that $\sum_{i = 1}^m x_{(i)} \leq \sum_{i = 1}^m y_{(i)}$ for each $m \in [n]$, where $x_{(1)} \geq \hdots \geq x_{(n)}$ and $y_{(1)} \geq \hdots \geq y_{(n)}$ denote the elements of $x$ and $y$ in non-increasing order.
    Then $\sum_{i = 1}^n f(x_i) \leq \sum_{i = 1}^n f(y_i)$.
\end{proposition}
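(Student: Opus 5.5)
We may sort both sequences, since $\sum_{i} f(x_i)$ and $\sum_i f(y_i)$ do not change under permutations. Assume from now on that $x_1 \geq \hdots \geq x_n$ and $y_1 \geq \hdots \geq y_n$. The hypothesis then reads $D_m \eqdef \sum_{i=1}^m (y_i - x_i) \geq 0$ for every $m \in [n]$. The plan is a Karamata/Abel-summation argument: write each difference $f(y_i)-f(x_i)$ as a chord slope times $(y_i - x_i)$, and then sum by parts.

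For each $i \in [n]$ we define a slope $c_i$. If $y_i \neq x_i$, set $c_i \eqdef \frac{f(y_i)-f(x_i)}{y_i - x_i}$. If $y_i = x_i$, let $c_i$ be a one-sided derivative of $f$ at $x_i$; we take the right derivative, or the left one at the right endpoint of the domain. In every case $f(y_i) - f(x_i) = c_i (y_i - x_i)$.

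We need two properties of these slopes.
\begin{enumerate}[(i)]
\item $c_i \geq 0$ for all $i$. This holds because $f$ is non-decreasing.
\item $c_1 \geq c_2 \geq \hdots \geq c_n$. This is the standard three-chord property of convex functions: the slope of the chord between points $a$ and $b$ is non-decreasing in each endpoint. Since $x_i \geq x_{i+1}$ and $y_i \geq y_{i+1}$, the chord for index $i$ has both endpoints at least as large as those for index $i+1$, so its slope is at least as large.
\end{enumerate}

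Next we apply summation by parts, with the convention $c_{n+1} \eqdef 0$:
\begin{equation*}
\sum_{i=1}^n \bigl(f(y_i) - f(x_i)\bigr) = \sum_{i=1}^n c_i (y_i - x_i) = \sum_{m=1}^{n} (c_m - c_{m+1}) D_m .
\end{equation*}
Each term on the right is nonnegative: for $m<n$, $c_m - c_{m+1} \geq 0$ by (ii) and $D_m \geq 0$; for $m = n$ the term is $c_n D_n$, which is nonnegative by (i). This yields $\sum_i f(x_i) \leq \sum_i f(y_i)$. Note that the last term is exactly where the monotonicity of $f$ is needed, since weak majorization does not force $D_n = 0$.

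The main obstacle is making (ii) rigorous in the degenerate cases. These are the indices with $x_i = y_i$, where the "chord" collapses to a point and we must check that the chosen one-sided derivative is sandwiched between neighbouring chord slopes. A further case is points at the boundary of the domain, where a one-sided derivative may be infinite. I would handle both at once by a perturbation argument. If some $y_i = x_i$, replace $y_i$ by $y_i + \eps$; this preserves the weak majorization and the ordering after a small adjustment. Prove the inequality for the perturbed sequences, in which case all chords are genuine, and let $\eps \to 0$ using continuity of $f$.
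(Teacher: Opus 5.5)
Your proof is correct, and it takes a different route from the one the paper relies on. The paper does not prove this statement; it imports it from Marshall--Olkin--Arnold. The standard argument there reduces weak majorization to ordinary majorization in two steps:
\begin{enumerate}[(a)]
\item From $x \prec_w y$ one constructs $u$ with $x \le u$ componentwise and $u \prec y$. Monotonicity of $f$ then gives $\sum_i f(x_i) \le \sum_i f(u_i)$.
\item Schur-convexity of $u \mapsto \sum_i f(u_i)$ for convex $f$ (via the doubly stochastic characterization of $\prec$ and Jensen's inequality) gives $\sum_i f(u_i) \le \sum_i f(y_i)$.
\end{enumerate}
Your Abel summation over chord slopes bypasses both the existence lemma for $u$ and the doubly stochastic machinery. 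It is elementary, self-contained, and treats $x \prec_w y$ in one pass. It also makes transparent that convexity controls the terms $(c_m - c_{m+1})D_m$ for $m<n$, while monotonicity enters only through the boundary term $c_n D_n$; with $D_n = 0$ you recover Karamata's inequality. The reference route, in exchange, separates the roles of the two hypotheses. It also applies to any increasing Schur-convex function, not just separable sums.

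The degenerate indices are easier than your sketch suggests. The perturbation, as written, is the weakest part of your argument. Raising a single $y_i$ can break the ordering when $y_{i-1} = y_i$. Re-sorting can then recreate a pair with equal entries; for example, $x = y = (1,1)$ stays degenerate at index $2$. On a domain bounded above, $y_i + \varepsilon$ may also leave the domain.

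Instead, simply discard all indices with $x_i = y_i$. They contribute $0$ to both sides. The surviving indices $j_1 < \dots < j_r$ still give non-increasing subsequences, and their partial differences equal $D_{j_m} \ge 0$, so weak majorization is preserved. Every chord is then genuine. Property (ii) follows from the three-chord lemma through an intermediate chord with endpoints $x_{i+1}, y_i$ or $x_i, y_{i+1}$. These cannot both be degenerate, since $x_{i+1} = y_i$ and $x_i = y_{i+1}$ together force $x_i = y_i$.

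With this fix your argument works on any interval domain, which is what the paper actually needs. Its $f = \max\{\frac{1}{2}x(x-1), 0\}$ is non-decreasing only on $[0,\infty)$, where the nonnegative sequences $b$ and $\delta$ live.
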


We are now ready to prove the following theorem.
\begin{theorem}
    Inequalities~\eqref{eq rankInternal2} are valid for $\mc{F}_\eqref{BSDP}$.
\end{theorem}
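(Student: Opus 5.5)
Let $X \in \mc{F}_{\eqref{BSDP}}$ and fix $\mc{S} \subseteq \mc{V}$. By characterization~\eqref{BQP characterization 1} of $\mc{D}^n_k$, write $X = PP^\top$ with $P \in \{0,1\}^{n\times k}$ and $\sum_{c} P_{ic} \leq 1$. For each color $c \in [k]$, set $x_c \eqdef \sum_{i \in \mc{S}} P_{ic} \in \mathbb{N}_0$, the number of vertices of $\mc{S}$ receiving color $c$. Exactly as in the proof of Theorem~\ref{thm:bqp1}, binarity of $P$ gives
$$\sum_{i,j \in \mc{S}: i<j} X_{ij} = \sum_{c \in [k]} \sum_{i,j \in \mc{S}: i<j} P_{ic}P_{jc} = \sum_{c \in [k]} \binom{x_c}{2}.$$
So we must show $\sum_{c} \binom{x_c}{2} \leq \sum_{\kappa=1}^k \binom{y_\kappa}{2}$, where $y_\kappa \eqdef r_\kappa(\mc{S}) - r_{\kappa-1}(\mc{S})$.

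The plan is to apply Proposition~\ref{Weak majorization lemma} with $f(t) = \binom{t}{2}$. On integers this agrees with $\max\{0, t(t-1)/2\}$, which is continuous, non-decreasing and convex on $\mathbb{R}$. Using this extension rather than $t(t-1)/2$ itself avoids monotonicity issues on $[0,1/2]$.

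Two facts must be verified.
\begin{enumerate}
\item The sequence $y$ is non-increasing. This is equivalent to concavity of $\kappa \mapsto r_\kappa(\mc{S})$, i.e.\ $r_{\kappa+1} - r_\kappa \leq r_\kappa - r_{\kappa-1}$. This is the step I expect to be the main obstacle. If concavity turns out not to be available, I would drop it: the majorization comparison below only uses partial sums of $y$ in its given order, and for weak majorization it suffices to compare against the sorted $y$, whose top-$m$ sum is at least $r_m$.
\item $y$ weakly majorizes $x$. Sort the colors so that $x_{(1)} \geq \dots \geq x_{(k)}$. For each $m \in [k]$, the $m$ largest color classes restricted to $\mc{S}$ are disjoint stable sets of $\mc{G}[\mc{S}]$. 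They therefore form an $m$-colorable induced subgraph, so $\sum_{c=1}^m x_{(c)} \leq r_m(\mc{S}) = \sum_{\kappa=1}^m y_\kappa$ by telescoping. Moreover, the sum of the $m$ largest entries of $y$ is at least $\sum_{\kappa=1}^m y_\kappa$. Hence the top-$m$ partial sums of $x$ are dominated by those of $y$ for every $m$, whether or not $y$ is already sorted. This is exactly weak majorization, and it sidesteps the concavity question in the first item.
\end{enumerate}
One technicality remains: the entries $y_\kappa$ must be nonnegative integers so that $f(y_\kappa) = \binom{y_\kappa}{2}$. This holds because $r_\kappa$ is non-decreasing in $\kappa$ by monotonicity of $\alpha_\kappa$.

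Applying Proposition~\ref{Weak majorization lemma} then yields $\sum_c f(x_c) \leq \sum_\kappa f(y_\kappa)$, which is inequality~\eqref{eq rankInternal2}.
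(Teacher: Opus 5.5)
Your proof is correct and is essentially the paper's argument. It uses the same function $f(t)=\max\{0,t(t-1)/2\}$ and the same chain $\sum_{c=1}^m x_{(c)} \le r_m(\mc{S}) = \sum_{\kappa=1}^m y_\kappa \le \sum_{\kappa=1}^m y_{(\kappa)}$, which gives weak majorization without any concavity of $\kappa \mapsto r_\kappa(\mc{S})$, and then applies Proposition~\ref{Weak majorization lemma}; the one small slip is that $f$ is non-decreasing only on $[0,\infty)$ (it equals $1$ at $t=-1$), not on all of $\mathbb{R}$, which is harmless because every $x_c$ and $y_\kappa$ is nonnegative.
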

\begin{proof}
    Let $f(x) = \max\left\{\frac{1}{2}x(x-1),0\right\}$, which is continuous, non-decreasing and convex in $x$.  
    Consider any assignment of at most $k$ colors to the vertices in some set $\mc{S} \subseteq \mc{V}$ (possibly leaving some vertices uncolored) and let $b = (b_c)_{c=1}^k$ be the sequence where $b_c \geq 0$ denotes the number of vertices in $\mc{S}$ that are assigned color $c \in [k]$.
    Furthermore, let $\delta = (\delta_\kappa)_{\kappa=1}^k$ be the sequence where $\delta_\kappa \eqdef r_\kappa(\mc{S})-r_{\kappa-1}(\mc{S}) \geq 0$ for each number of colors $\kappa \in [k]$ used.
    For each $m \in [k]$, we have $\sum_{c=1}^m b_{(c)} \leq r_m(\mc{S}) = \sum_{\kappa = 1}^m \delta_\kappa \leq \sum_{\kappa = 1}^m \delta_{(\kappa)}$, so $\delta$ weakly majorizes $b$.
    Hence, Proposition~\ref{Weak majorization lemma} applies. 
    Finally, let $X \in \mc{F}_{\eqref{BSDP}}$ be a feasible solution corresponding to the considered color assignment. We then have that $\sum_{i, j \in \mc{S}: i<j} X_{ij} = \sum_{c = 1}^k \binom{b_c}{2} = \sum_{c = 1}^k f(b_c) \leq \sum_{\kappa=1}^k f(\delta_\kappa) = \sum_{\kappa = 1}^k \binom{\delta_\kappa}{2}$.
\end{proof}

Next, we present inequalities that provide an upper bound on the number of vertices in a set $\mc{S} \subseteq \mc{V}$ that may have the same color as an external vertex $\ell \in \mc{V} \setminus \mc{S}$.
If $\ell$ is colored, then by the definition of $r_1(\mc{S})$, $\ell$ can have the same color as up to $r_1(\mc{S})$ vertices from $\mc{S}$.
This leads to the following valid inequalities for $\mc{F}_{\eqref{BSDP}}$:
\begin{alignat}{3}
    & \sum_{i \in \mc{S}} X_{i\ell} \leq r_1(\mc{S}) X_{\ell\ell} &\quad& \forall \mc{S} \subseteq \mc{V}, \ell \in \mc{V} \setminus \mc{S}. \label{eq rankExternal1}
\end{alignat}

Finally, the following complementary inequalities provide a lower bound on the number of vertices in a set $\mc{S} \subseteq \mc{V}$ that may have the same color as an external vertex $\ell \in \mc{V} \setminus \mc{S}$, given that $k$ colors are available:
\begin{alignat}{3}
    & \sum_{i \in \mc{S}} X_{ii} + \left(r_k(\mc{S}) - r_{k-1}(\mc{S})\right) X_{\ell\ell} \leq \sum_{i \in \mc{S}} X_{i\ell} + r_k(\mc{S}) &\quad& \forall \mc{S} \subseteq \mc{V}, \ell \in \mc{V} \setminus \mc{S}. \label{eq rankExternal2}
\end{alignat}
We prove the validity of these inequalities in the following theorem.
\begin{theorem}
    Inequalities~\eqref{eq rankExternal2} are valid for $\mc{F}_\eqref{BSDP}$.
\end{theorem}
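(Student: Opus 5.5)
We prove validity by a case split on whether the external vertex $\ell$ is colored, using the combinatorial interpretation of a feasible $X \in \mc{F}_{\eqref{BSDP}}$ as a partial $k$-coloring. So fix such an $X$, a set $\mc{S} \subseteq \mc{V}$ and a vertex $\ell \in \mc{V}\setminus\mc{S}$. Recall that $\sum_{i \in \mc{S}} X_{ii}$ counts the colored vertices of $\mc{S}$, and $\sum_{i\in\mc{S}} X_{i\ell}$ counts the vertices of $\mc{S}$ sharing the color of $\ell$; this sum is $0$ if $\ell$ is uncolored.

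\emph{Case $X_{\ell\ell}=0$.} Then $X_{i\ell}=0$ for all $i$, since the $2\times 2$ principal minors of $X$ are nonnegative. The inequality reduces to $\sum_{i\in\mc{S}} X_{ii} \leq r_k(\mc{S})$, which is exactly inequality~\eqref{eq rankInternal1}. It holds because the colored vertices of $\mc{S}$ induce a $k$-colorable subgraph of $\mc{G}[\mc{S}]$.

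\emph{Case $X_{\ell\ell}=1$.} Let $c$ be the color of $\ell$, let $b_c = \sum_{i\in\mc{S}} X_{i\ell}$ be the number of vertices of $\mc{S}$ with color $c$, and let $t = \sum_{i\in\mc{S}}X_{ii}$. We must show
\[
t - b_c \leq r_{k-1}(\mc{S}).
\]
The vertices of $\mc{S}$ colored with colors other than $c$ form an induced subgraph of $\mc{G}[\mc{S}]$ that is colored using at most $k-1$ colors. Their number is $t-b_c$, so by the definition of the $(k-1)$-rank, $t-b_c \leq r_{k-1}(\mc{S})$. Rearranging gives $t + (r_k(\mc{S})-r_{k-1}(\mc{S})) \leq b_c + r_k(\mc{S})$, which is the claim. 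When $k=1$ we use $r_0(\mc{S})=0$: the only colored vertices of $\mc{S}$ then have color $c$, so $t=b_c$.

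The only step needing care is recognizing that the right quantity to bound is the number of vertices avoiding $\ell$'s color, controlled by $r_{k-1}$. It is not the number of vertices sharing that color. Once this is seen, the argument is a direct counting argument, and no majorization argument (unlike for \eqref{eq rankInternal2}) is needed.
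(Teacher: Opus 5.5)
Your proof is correct and follows essentially the same route as the paper. In both arguments the key step is that the colored vertices of $\mc{S}$ not sharing $\ell$'s color use at most $k-1$ colors, so there are at most $r_{k-1}(\mc{S})$ of them, while the case $X_{\ell\ell}=0$ reduces to \eqref{eq rankInternal1}. The paper just merges your two cases into a single chain of inequalities, using $X_{\ell\ell}\in\{0,1\}$ and $\sum_{i\in\mc{S}}X_{ii}\le r_k(\mc{S})$.
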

\begin{proof}
    Let $\mc{S} \subseteq \mc{V}$, let $\ell \in \mc{V} \setminus \mc{S}$, and let $X \in \mc{F}_\eqref{BSDP}$.
    Observe that when coloring more than $r_{k-1}(\mc{S})$ vertices in $\mc{S}$ with $k$ colors, then each of the $k$ available colors appears at least $\sum_{i \in \mc{S}}X_{ii}-r_{k-1}(\mc{S}) > 0$ times in $\mc{S}$. 
    Indeed, if some color were to be assigned to fewer than $\sum_{i \in \mc{S}}X_{ii}-r_{k-1}(\mc{S})$ vertices, then the total number of colored vertices in $\mc{S}$ would be less than $\sum_{i \in \mc{S}}X_{ii}$, since the remaining $k-1$ colors can together cover at most $r_{k-1}(\mc{S})$ vertices.
    Thus, depending on the number of colored vertices in $\mc{S}$, if vertex $\ell$ is colored, then $\ell$ has the same color as at least $\max\left\{\sum_{i \in \mc{S}}X_{ii}-r_{k-1}(\mc{S}), 0\right\}$ of the vertices in $\mc{S}$.
    Therefore, we have
    \begin{align*}
    \sum_{i \in \mc{S}} X_{i\ell} &\geq \max\left\{\sum_{i \in \mc{S}}X_{ii}-r_{k-1}(\mc{S}), 0\right\} X_{\ell\ell} \geq \left(\sum_{i \in \mc{S}}X_{ii}-r_{k-1}(\mc{S})\right)X_{\ell\ell} \\
    &\geq \left(\sum_{i \in \mc{S}}X_{ii}-r_{k-1}(\mc{S})\right) - \left(r_{k}(\mc{S})-r_{k-1}(\mc{S})\right)\left(1-X_{\ell\ell}\right),    
    \end{align*}
    where we used for the last inequality that $\sum_{i \in \mc{S}}X_{ii} \leq r_k(\mc{S})$ and that $X_{\ell\ell} \in \{0,1\}$. 
    Rewriting the obtained inequality gives the desired result.  
\end{proof}

Note that it is intractable to add all generalized rank inequalities~\eqref{eq rankInternal1}-\eqref{eq rankExternal2} as cuts to strengthen \eqref{SDP}, since there are exponentially many such inequalities, and, moreover, computing the $\kappa$-rank of an arbitrary set $\mc{S}$ is $\mc{NP}$-hard.
Therefore, in the following sections we restrict attention to sets $\mc{S}$ that are cliques and odd holes, as these sets admit explicit expressions for the $\kappa$-ranks.
In our computational experiments, we further limit the size of the cliques and odd holes considered.

\subsubsection{Generalized clique inequalities} \label{Sec: Clique inequalities}
This section concerns the generalized rank inequalities~\eqref{eq rankInternal1}-\eqref{eq rankExternal2} for subsets of $\mc{V}$ that are cliques. 
Recall that a clique in $\mc{G}$ is a set of vertices $\mc{Q} \subseteq \mc{V}$ that induces a complete subgraph, i.e., every pair of vertices in $\mc{Q}$ is adjacent in $\mc{G}$. 
For any clique $\mc{Q} \subseteq \mc{V}$, we have that $r_\kappa(\mc{Q}) = \min\{\kappa, |\mc{Q}|\}$ for each $\kappa \in [k]$.

We first show that inequalities \eqref{eq rankInternal1} and~\eqref{eq rankInternal2} for cliques are already implied by the constraints of \eqref{SDP}.
\begin{lemma} \label{Redundant clique inequalities}
    Let $X \in \mc{F}_\eqref{SDP}$ and let $\mc{Q} \subseteq \mc{V}$ be a clique.
    Then $\sum_{i \in \mc{Q}} X_{ii} \leq \min\{k,|\mc{Q}|\}$ and $\sum_{i,j \in \mc{Q}: i < j} X_{ij} =0$.
\end{lemma}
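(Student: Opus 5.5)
The second claim is immediate, so I will dispose of it first. Since $\mc{Q}$ is a clique, every pair $i<j$ in $\mc{Q}$ satisfies $\{i,j\}\in\mc{E}$. The edge constraints of \eqref{SDP} then give $X_{ij}=0$, and summing over all such pairs yields $\sum_{i,j\in\mc{Q}: i<j} X_{ij}=0$.

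For the first claim, the bound $\sum_{i\in\mc{Q}}X_{ii}\le|\mc{Q}|$ follows directly from $\diag{X}\le e_n$, so it remains to show $\sum_{i\in\mc{Q}}X_{ii}\le k$. The plan is to evaluate the PSD condition on $\aug{X}$ at a suitable vector, as in the proof of the lemma comparing \eqref{SDP} with the LP relaxation. Take $a=\begin{psmallmatrix} t \\ -\mathbbm{1}_{\mc{Q}}\end{psmallmatrix}$ with a scalar $t$ to be chosen. Because the off-diagonal entries of $X$ inside $\mc{Q}$ vanish, writing $s\eqdef\sum_{i\in\mc{Q}}X_{ii}$ we get
$$0\le \IP{aa^\top}{\aug{X}} = kt^2 - 2ts + s.$$
With $t=1$ this reads $k-2s+s\ge0$, that is, $s\le k$, which is exactly the required bound.

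Combining the two bounds gives $s\le\min\{k,|\mc{Q}|\}$. An alternative route is to take the Schur complement $X-\frac1k\diag{X}\diag{X}^\top\psd\textbf{0}$ and test it with $\mathbbm{1}_{\mc{Q}}$, which gives $s-s^2/k\ge0$ and hence $s\le k$. The only point needing care is that the cross terms $X_{ij}$ within $\mc{Q}$ disappear, and that is guaranteed precisely because $\mc{Q}$ is a clique. I therefore expect no real obstacle in this argument.
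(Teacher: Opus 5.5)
Your proof is correct and follows the paper's argument. The edge constraints make the within-clique off-diagonal entries vanish, $\diag{X}\le e_n$ gives the bound $|\mc{Q}|$, and testing $\aug{X}\psd\textbf{0}$ with $a=\begin{psmallmatrix}1\\ -\mathbbm{1}_{\mc{Q}}\end{psmallmatrix}$ gives the bound $k$.
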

\begin{proof}
    We have that $X_{ij} = 0$ for each $i,j \in \mc{Q}$ ($i \neq j$), and thus it follows that $\sum_{i,j \in \mc{Q}: i<j} X_{ij} = 0$. Moreover, since $\aug{X} \psd \textbf{0}$, we have that $\IP{aa^\top}{\aug{X}} \geq 0$ for all $a \in \mathbb{R}^{n+1}$. 
    In particular, for $a = \begin{pmatrix} 1 \\ -\mathbbm{1}_{\mc{Q}}\end{pmatrix}$ we get $k -\sum_{i \in \mc{Q}} X_{ii} + 2\sum_{i, j \in \mc{Q}: i < j} X_{ij} \geq 0$. 
    Using that $\sum_{i,j \in \mc{Q}: i<j} X_{ij} = 0$, we obtain the inequality $\sum_{i \in \mc{Q}} X_{ii} \leq k$. Finally, the inequality $\sum_{i \in \mc{Q}} X_{ii} \leq |\mc{Q}|$ follows from the constraint $\diag{X} \leq e_n$.
\end{proof}

In contrast, inequalities~\eqref{eq rankExternal1} and~\eqref{eq rankExternal2} for cliques are not implied by the constraints $\mc{F}_\eqref{SDP}$.
For reference, these inequalities read as follows:
\begin{alignat}{3}
    & \sum_{i \in \mc{Q}} X_{i\ell} \leq X_{\ell\ell} &\quad& \forall \text{ clique } \mc{Q} \subseteq \mc{V}, \ell \in \mc{V} \setminus \mc{Q} \label{eq cliqueExternal1} \\
    & \sum_{i \in \mc{Q}} X_{ii} + X_{\ell\ell} \leq \sum_{i \in \mc{Q}} X_{i\ell} + k &\quad& \forall \text{ clique } \mc{Q} \subseteq \mc{V} \text{ with } |\mc{Q}| \geq k,  \ell \in \mc{V} \setminus \mc{Q}. \label{eq cliqueExternal2}
\end{alignat}
We will refer to these inequalities as the \textit{generalized clique inequalities}.
Note that \eqref{eq cliqueExternal1} and \eqref{eq cliqueExternal2} can also be derived from the generalized BQP inequalities~\eqref{eq BQP1} and~\eqref{eq BQP2}, respectively.
To see this, take $\mc{S} = \mc{Q}$ and $\mc{S}' = \{\ell\}$ for inequality~\eqref{eq BQP1} and $\mc{S} = \mc{Q} \cup \{\ell\}$ for inequality~\eqref{eq BQP2}, and use Lemma~\ref{Redundant clique inequalities}. 
Note that inequalities~\eqref{eq cliqueExternal2} for cliques $\mc{Q} \subseteq \mc{V}$ with $|\mc{Q}| < k$ are already implied by bound constraints~\eqref{eq bounds}.

\citet{Pucher2023SSAndColoringBounds} also considered inequalities~\eqref{eq cliqueExternal1} and~\eqref{eq cliqueExternal2} for the stable set problem, i.e., for the M$k$CS problem for $k=1$. 
They found that inequalities~\eqref{eq cliqueExternal2} can be strengthened by extending the external vertex $\ell$ to a second clique $\mc{Q}'$. 
Applying this idea to the M$k$CS problem for $k \in [n-1]$, we obtain the following generalized clique inequalities:
\begin{alignat}{3}
    & \sum_{i \in \mc{Q}} X_{ii} + \sum_{j \in \mc{Q}'} X_{jj} \leq \sum_{i \in \mc{Q}}\sum_{j \in \mc{Q}'} X_{ij} + k &\quad& \forall \text{ cliques } \mc{Q}, \mc{Q}' \subseteq \mc{V} \text{ with } \mc{Q} \cap \mc{Q}' = \emptyset, |\mc{Q}| + |\mc{Q}'| > k. \label{eq cliqueUnion}
\end{alignat}
Note that inequalities~\eqref{eq cliqueUnion} are also a special case of generalized BQP inequalities~\eqref{eq BQP2} (take $\mc{S} = \mc{Q} \cup \mc{Q}'$ and use Lemma~\ref{Redundant clique inequalities}). 
Moreover, observe that inequalities~\eqref{eq cliqueExternal2} are a special case of inequalities~\eqref{eq cliqueUnion}. 
Preliminary experiments confirmed that inequalities~\eqref{eq cliqueUnion} result in stronger bounds than inequalities~\eqref{eq cliqueExternal2} alone.

As implied by the following proposition, when using generalized clique inequalities~\eqref{eq cliqueExternal1} as cuts to strengthen \eqref{SDP}, it suffices to consider these inequalities for maximal cliques only.

\begin{proposition} \label{Prop: only maximal cliqueExternal1}
    Let $X \in \mc{F}_{\eqref{SDP}}$  satisfy inequality~\eqref{eq cliqueExternal1} for a clique $\mc{Q} \subseteq \mc{V}$ and  an external vertex $\ell \in \mc{V} \setminus \mc{Q}$. 
    Then, for each $h \in \mc{Q}$, $X$ also satisfies inequality~\eqref{eq cliqueExternal1} for the clique $\mc{Q} \setminus \{h\}$ and the vertex~$\ell$.
\end{proposition}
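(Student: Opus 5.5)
This is a one-line consequence of the nonnegativity constraint in \eqref{SDP}. The plan is to compare the left-hand side of inequality~\eqref{eq cliqueExternal1} for $\mc{Q}$ with the one for $\mc{Q}\setminus\{h\}$, and observe that they differ only by a single nonnegative entry of $X$.

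First, note that $\mc{Q}\setminus\{h\}$ is again a clique, since every subset of a clique induces a complete subgraph. Also $\ell \in \mc{V}\setminus(\mc{Q}\setminus\{h\})$, because $\ell \notin \mc{Q}$. Hence inequality~\eqref{eq cliqueExternal1} is well defined for the pair $(\mc{Q}\setminus\{h\},\ell)$. If $\mc{Q}\setminus\{h\}=\emptyset$, its left-hand side is the empty sum $0$. The claim then reduces to $0\le X_{\ell\ell}$, which holds since $X \geq \textbf{0}_n$.

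In general, split off the term for $h$ and use $X_{h\ell}\ge 0$, which follows from $X\geq\textbf{0}_n$ in \eqref{SDP}. This gives
$$\sum_{i\in\mc{Q}\setminus\{h\}} X_{i\ell} \;=\; \sum_{i\in\mc{Q}} X_{i\ell} - X_{h\ell} \;\le\; \sum_{i\in\mc{Q}} X_{i\ell} \;\le\; X_{\ell\ell},$$
where the last inequality is the assumed inequality~\eqref{eq cliqueExternal1} for $\mc{Q}$ and $\ell$. This is exactly inequality~\eqref{eq cliqueExternal1} for $\mc{Q}\setminus\{h\}$ and $\ell$.

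The only point needing care is to check that no constraint other than elementwise nonnegativity is used, so the argument is valid for every $X\in\mc{F}_{\eqref{SDP}}$ and not just for integral solutions. No PSD argument is needed. Applying the result repeatedly shows that satisfying~\eqref{eq cliqueExternal1} for a maximal clique implies it for every subclique. This justifies restricting the cutting-plane separation to maximal cliques.
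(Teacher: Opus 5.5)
Your proof is correct and matches the paper's argument: split off the term $X_{h\ell}$, use $X_{h\ell}\ge 0$ from $X\geq\textbf{0}_n$, and combine with the assumed inequality for $\mc{Q}$ and $\ell$. The paper only orders the two steps differently, writing $\le X_{\ell\ell}-X_{h\ell}\le X_{\ell\ell}$; your extra checks (that subcliques are cliques, and the empty-clique case) are harmless additions.
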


\begin{proof}
    Using that $\sum_{i\in \mc{Q}} X_{i\ell} \leq X_{\ell\ell}$ and $X_{h\ell} \geq 0$, we find that $\sum_{i \in \mc{Q} \setminus \{h\}} X_{i\ell} = \sum_{i \in \mc{Q}} X_{i\ell} - X_{h\ell} \leq X_{\ell\ell} - X_{h\ell} \leq X_{\ell\ell}$, which is exactly inequality~\eqref{eq cliqueExternal1} for $\mc{Q} \setminus \{h\}$ and $\ell$.
\end{proof}

Furthermore, once inequalities~\eqref{eq cliqueExternal1} have been added to \eqref{SDP}, it is sufficient to consider inequalities~\eqref{eq cliqueUnion} for pairs of maximal cliques, as implied by the next proposition.
\begin{proposition} \label{Prop: only maximal cliqueUnion}
    Let $X \in \mc{F}_\eqref{SDP}$  satisfy inequality~\eqref{eq cliqueUnion} for disjoint cliques $\mc{Q}, \mc{Q}' \subseteq \mc{V}$. 
    Suppose that $X$ satisfies inequality~\eqref{eq cliqueExternal1} for the clique $\mc{Q}$ and a vertex $h \in \mc{Q}'$. 
    Then, $X$ also satisfies inequality~\eqref{eq cliqueUnion} for the  cliques $\mc{Q}$ and $\mc{Q}'\setminus \{h\}$.
\end{proposition}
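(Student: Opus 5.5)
The plan is a direct ``subtract one inequality from the other'' argument, in the same spirit as the proof of Proposition~\ref{Prop: only maximal cliqueExternal1}. The idea is to obtain the target inequality for $\mc{Q}$ and $\mc{Q}'\setminus\{h\}$ by removing the contribution of $h$ from both sides of inequality~\eqref{eq cliqueUnion} for $\mc{Q},\mc{Q}'$. On the left-hand side this removes $X_{hh}$. On the right-hand side it removes $\sum_{i\in\mc{Q}} X_{ih}$, and the hypothesis that~\eqref{eq cliqueExternal1} holds for $\mc{Q}$ and $h$ says exactly that this removed quantity is at most $X_{hh}$.

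Concretely, I would start from
$$\sum_{i \in \mc{Q}} X_{ii} + \sum_{j \in \mc{Q}'} X_{jj} \leq \sum_{i \in \mc{Q}}\sum_{j \in \mc{Q}'} X_{ij} + k$$
and split off the terms with $j=h$ on both sides. This gives
$$\sum_{i \in \mc{Q}} X_{ii} + \sum_{j \in \mc{Q}'\setminus\{h\}} X_{jj} \leq \sum_{i \in \mc{Q}}\sum_{j \in \mc{Q}'\setminus\{h\}} X_{ij} + k + \Big(\sum_{i\in\mc{Q}} X_{ih} - X_{hh}\Big).$$
The bracketed term is nonpositive by~\eqref{eq cliqueExternal1} for $\mc{Q}$ and the external vertex $h$; note $h\notin\mc{Q}$ because the cliques are disjoint. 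Dropping that term yields the desired inequality.

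The only point needing care is the side condition $|\mc{Q}|+|\mc{Q}'\setminus\{h\}|>k$ in the definition of~\eqref{eq cliqueUnion}.
\begin{itemize}
\item If the condition holds, the argument above is complete.
\item If it fails, so that $|\mc{Q}|+|\mc{Q}'|-1\le k$, the inequality is not formally part of the family. It still holds trivially for any $X\in\mc{F}_\eqref{SDP}$: by $\diag{X}\le e_n$ the left-hand side is at most $|\mc{Q}|+|\mc{Q}'|-1\le k$, and by $X\ge\mathbf{0}$ the right-hand side is at least $k$.
\end{itemize}
The same reasoning covers the degenerate case $\mc{Q}'=\{h\}$.

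I expect no real obstacle: the argument is a single linear combination of two given inequalities. The only thing to check is this bookkeeping of the cardinality condition.
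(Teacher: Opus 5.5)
Your proposal is correct and matches the paper's proof: subtract $X_{hh}$ from both sides of \eqref{eq cliqueUnion} for $\mc{Q},\mc{Q}'$, split off $\sum_{i\in\mc{Q}} X_{ih}$ on the right-hand side, and drop the nonpositive term $\sum_{i\in\mc{Q}} X_{ih} - X_{hh}$ using \eqref{eq cliqueExternal1}. Your case split on whether $|\mc{Q}|+|\mc{Q}'|-1>k$ is harmless but unnecessary, because the linear combination gives the target inequality whether or not that side condition holds; the paper does not comment on the condition.
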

\begin{proof}
    Using that $\sum_{i \in \mc{Q}} X_{ii} + \sum_{j \in \mc{Q}'} X_{jj} \leq \sum_{i \in \mc{Q}}\sum_{j \in \mc{Q}'} X_{ij} + k$ and $\sum_{i \in \mc{Q}} X_{ih} \leq X_{hh}$, we find~$\sum_{i \in \mc{Q}} X_{ii}$
    $+ \sum_{j \in \mc{Q}' \setminus \{h\}} X_{jj}
    = \sum_{i \in \mc{Q}} X_{ii} + \sum_{j \in \mc{Q}'} X_{jj} - X_{hh}
    \leq \sum_{i \in \mc{Q}}\sum_{j \in \mc{Q}'} X_{ij} + k - X_{hh}
    = \sum_{i \in \mc{Q}} \sum_{j \in \mc{Q}' \setminus \{h\}} X_{ij} + \sum_{i \in \mc{Q}} X_{ih} + k - X_{hh}
    = \sum_{i \in \mc{Q}} \sum_{j \in \mc{Q}' \setminus \{h\}} X_{ij} + k + \left(\sum_{i \in \mc{Q}} X_{ih} - X_{hh}\right) \leq \sum_{i \in \mc{Q}} \sum_{j \in \mc{Q}' \setminus \{h\}} X_{ij} + k$, 
    which is exactly inequality~\eqref{eq cliqueUnion} for $\mc{Q}$ and $\mc{Q}'\setminus \{h\}$.
\end{proof}
Note that this proposition also applies when interchanging the roles of $\mc{Q}$ and $\mc{Q}'$.

\subsubsection{Generalized odd-hole inequalities} \label{Sec: Odd-hole inequalities}
We now consider the generalized rank inequalities~\eqref{eq rankInternal1}-\eqref{eq rankExternal2} for subsets of $\mc{V}$ that are odd holes.
We first recall some definitions.
A cycle in $\mc{G}$ is a set of vertices $\mc{C} = \{v_1, v_2, \hdots, v_{|\mc{C}|}\} \subseteq \mc{V}$ with $|\mc{C}| \geq 3$ such that $\{v_i,v_{i+1}\} \in \mc{E}$ for all $i \in [|\mc{C}|-1]$ and $\{v_{|\mc{C}|}, v_1\} \in \mc{E}$.
Additional edges in a cycle that connect nonconsecutive vertices are called chords. A chordless cycle is called a hole. A cycle or hole is called odd (respectively even) if its cardinality is odd (respectively even).
Odd holes of cardinality $3$ are also cliques, so we only consider here odd holes of cardinality at least $5$.
Observe that for any odd hole $\mc{C} \subseteq \mc{V}$, the maximum number of vertices that can be colored in $\mc{G}[\mc{C}]$ with $\kappa \in [k]$ colors equals
$$r_\kappa(\mc{C}) = \min\left\{\frac{|\mc{C}|-1}{2} \kappa, |\mc{C}|\right\} = \begin{cases}
    \frac{|\mc{C}|-1}{2} & \text{if $\kappa=1$} \\
    |\mc{C}|-1 & \text{if $\kappa=2$} \\
    |\mc{C}| & \text{otherwise}.
\end{cases}$$

In preliminary experiments, we considered the strength of \eqref{SDP} when adding inequalities~\eqref{eq rankInternal1}-\eqref{eq rankExternal2} for odd holes. 
Among these inequalities, the ones of type~\eqref{eq rankExternal1} led to the largest improvements in the SDP bounds. 
Motivated by this, we focus on these inequalities here. In the following, we will refer to them as \textit{generalized odd-hole inequalities}, and they read as follows:
\begin{alignat}{3}
    & \sum_{i \in \mc{C}} X_{i\ell} \leq \frac{|\mc{C}|-1}{2} X_{\ell\ell} &\quad& \forall \text{ odd hole } \mc{C} \subseteq \mc{V}, \ell \in \mc{V} \setminus \mc{C}. \label{eq HoleExternal1}
\end{alignat}
These inequalities were first proposed by \citet{Pucher2025PracticalExperience} for SDP relaxations for the stable set problem.

Let us briefly consider even holes.
For any even hole $\mc{C} \subseteq \mc{V}$ we have that $r_1(\mc{C}) = \frac{|\mc{C}|}{2}$ and $r_\kappa(\mc{C}) = |\mc{C}|$ for all $\kappa \geq 2$. 
Therefore, for any even hole $\mc{C}$ we can replace the factor $\frac{|\mc{C}|-1}{2}$ in inequalities~\eqref{eq HoleExternal1} by $\frac{|\mc{C}|}{2}$ to obtain valid inequalities.
However, as proven in the next proposition, such inequalities are already implied by triangle inequalities~\eqref{eq T1}.
\begin{proposition} \label{Prop: no even holes}
    Let $X \in \mc{F}_{\eqref{SDP}}$ satisfy all triangle inequalities~\eqref{eq T1}. 
    Let $\mc{C} \subseteq \mc{V}$, $|\mc{C}| \geq 4$ be an even hole and let $\ell \in \mc{V} \setminus \mc{C}$ be an external vertex. 
    Then $X$ satisfies $\sum_{i \in \mc{C}} X_{i\ell} \leq \frac{|\mc{C}|}{2} X_{\ell\ell}$.
\end{proposition}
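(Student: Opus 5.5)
The even hole splits into $|\mc{C}|/2$ disjoint pairs of consecutive vertices, each an edge of $\mc{G}$. The plan is to bound the contribution of each pair to $\sum_{i\in\mc{C}} X_{i\ell}$ by $X_{\ell\ell}$ and then sum.

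Label the hole $\mc{C} = \{v_1, \dots, v_{2m}\}$ with $2m = |\mc{C}|$, so that consecutive vertices (cyclically) are adjacent. Partition $\mc{C}$ into the $m$ pairs $\{v_{2t-1}, v_{2t}\}$ for $t \in [m]$. Each such pair is an edge of $\mc{G}$, because consecutive vertices of the cycle are adjacent. This is the only property of the hole we need, so chordlessness plays no role.

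For a fixed pair $\{i,j\} = \{v_{2t-1}, v_{2t}\}$, apply the triangle inequality~\eqref{eq T1} to the triple $\{i,j,\ell\}$. This is allowed since $\ell \notin \mc{C}$, so the three vertices are distinct. It gives
$$X_{i\ell} + X_{j\ell} \leq X_{\ell\ell} + X_{ij}.$$
Because $\{i,j\} \in \mc{E}$, the constraint $X_{ij} = 0$ of \eqref{SDP} holds, and so $X_{i\ell} + X_{j\ell} \leq X_{\ell\ell}$.

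Summing this bound over the $m$ disjoint pairs, which cover $\mc{C}$ exactly once, yields
$$\sum_{i \in \mc{C}} X_{i\ell} \leq m X_{\ell\ell} = \frac{|\mc{C}|}{2} X_{\ell\ell}.$$

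There is no real obstacle. The only point to check is that the orientation of~\eqref{eq T1} is the right one, namely with $\ell$ as the vertex whose diagonal entry $X_{\ell\ell}$ appears on the right-hand side. Since the inequality is imposed for every triple and every choice of distinguished vertex, this choice is available.
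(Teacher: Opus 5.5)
Your proposal is correct and matches the paper's proof: pair consecutive vertices of the even hole into $|\mc{C}|/2$ disjoint edges, apply \eqref{eq T1} to each pair with $\ell$ as the distinguished vertex, use $X_{ij}=0$ on edges, and sum. Your side remark is also right: chordlessness is never used, so the argument works for any even cycle.
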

\begin{proof}
    Without loss of generality, we can label the vertices of $\mc{C}$ as $1, 2, \hdots, |\mc{C}|$ so that $\{i, i+1\} \in \mc{E}$ for each odd $i \in \{1, \hdots, |\mc{C}|-1\}$. 
    Therefore, for each such $i$, we can combine the triangle inequality~\eqref{eq T1} for triplet $\{i,i+1, \ell\}$ with the constraint $X_{i,i+1} = 0$ to obtain that $X_{i\ell} + X_{i+1, \ell} \leq X_{\ell\ell}$. 
    Summing all $\frac{|\mc{C}|}{2}$ such inequalities, we obtain $\sum_{i \in \mc{C}} X_{i\ell} \leq \frac{|\mc{C}|}{2} X_{\ell\ell}$.
\end{proof}

In a similar vein, note that inequalities~\eqref{eq HoleExternal1} also hold for odd cycles containing chords, as such cycles $\mc{C}$ satisfy $r_1(\mc{C}) \leq \frac{|\mc{C}|-1}{2}$. 
However, we do not need to consider such inequalities to strengthen~\eqref{SDP}, as implied by the following proposition.

\begin{proposition} \label{Prop: no cycles with chords}
    Let $X \in \mc{F}_{\eqref{BSDP}}$, let $\mc{C} \subseteq \mc{V}$ be an odd cycle with $|\mc{C}| \geq 5$ that contains at least one chord, and let $\ell \in \mc{V}\setminus \mc{C}$ be an external vertex. 
    Then, the valid inequality $\sum_{i \in \mc{C}} X_{i\ell} \leq \frac{|\mc{C}|-1}{2} X_{\ell\ell}$ can be expressed as the sum of an odd-hole inequality of the form~\eqref{eq HoleExternal1} and a number of triangle inequalities~\eqref{eq T1}.   
\end{proposition}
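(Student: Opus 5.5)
The plan is to induct on $|\mc{C}|$ by splitting the cycle along a chord into an odd part and an even part. The even part will be absorbed by triangle inequalities~\eqref{eq T1}, exactly as in the proof of Proposition~\ref{Prop: no even holes}. The odd part will either be a hole, giving an inequality~\eqref{eq HoleExternal1}, or will again have a chord, in which case the induction hypothesis applies. I read "odd hole" in~\eqref{eq HoleExternal1} as allowing $|\mc{C}|=3$, i.e.\ a triangle, in which case the inequality is a clique inequality~\eqref{eq cliqueExternal1}. The formula with factor $\frac{|\mc{C}|-1}{2}=1$ still makes sense there, and this case genuinely occurs as a base case.

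\textbf{Splitting along a chord.} Write $m=|\mc{C}|$, which is odd, and label $\mc{C}=\{v_1,\hdots,v_m\}$ in cyclic order. Let $\{v_a,v_b\}$ with $a<b$ be a chord. It splits $\mc{C}$ into two cycles:
\begin{itemize}
\item $\mc{C}_1=\{v_a,v_{a+1},\hdots,v_b\}$;
\item $\mc{C}_2=\{v_b,\hdots,v_m,v_1,\hdots,v_a\}$.
\end{itemize}
Both use the chord as an edge, and $|\mc{C}_1|+|\mc{C}_2|=m+2$. Since the chord joins nonconsecutive vertices, both cycles have length at least $3$. Because $m+2$ is odd, exactly one of them is odd; call it $\mc{C}_{\mathrm{o}}$ with $p=|\mc{C}_{\mathrm{o}}|$ odd. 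The other, $\mc{C}_{\mathrm{e}}$, has $q=|\mc{C}_{\mathrm{e}}|\ge 4$ even.

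\textbf{Absorbing the even part.} The vertex set $\mc{C}_{\mathrm{e}}\setminus\{v_a,v_b\}$ is a path of $q-2$ consecutive cycle vertices, and $q-2$ is even. I will pair it into $(q-2)/2$ disjoint pairs $\{i,j\}$ of consecutive vertices, so each pair is an edge of $\mc{G}$. For each such pair, triangle inequality~\eqref{eq T1} on $\{i,j,\ell\}$ together with $X_{ij}=0$ gives
$$X_{i\ell}+X_{j\ell}\le X_{\ell\ell}.$$
Summing over the pairs bounds $\sum_{i\in \mc{C}_{\mathrm{e}}\setminus\{v_a,v_b\}}X_{i\ell}$ by $\frac{q-2}{2}X_{\ell\ell}$.

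\textbf{Handling the odd part and counting.} The remaining vertices are exactly $\mc{C}_{\mathrm{o}}$. If $\mc{C}_{\mathrm{o}}$ is chordless, it is an odd hole (or a triangle when $p=3$), and inequality~\eqref{eq HoleExternal1} gives $\sum_{i\in\mc{C}_{\mathrm{o}}}X_{i\ell}\le \frac{p-1}{2}X_{\ell\ell}$. If $\mc{C}_{\mathrm{o}}$ still has a chord, then $p<m$, so the induction hypothesis writes its inequality as one odd-hole inequality plus triangle inequalities. The coefficients then add up correctly:
$$\frac{p-1}{2}+\frac{q-2}{2}=\frac{m+2-3}{2}=\frac{m-1}{2}.$$
So the target inequality is exactly the sum of one odd-hole inequality and a collection of triangle inequalities. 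Since this is a sum of valid inequalities, it also re-proves validity.

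\textbf{Main obstacle.} The delicate points are bookkeeping rather than depth. First, the pairs in the even part must be genuine edges and must not use $v_a,v_b$; this is why I pair only the interior path of $\mc{C}_{\mathrm{e}}$. Second, the case $p=3$ must be accepted as a (degenerate) odd hole inequality; otherwise the statement would fail for cycles whose chords all cut off triangles. I expect to remark this explicitly.
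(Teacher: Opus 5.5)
Your proposal is correct and follows essentially the same route as the paper: split along a chord, absorb the interior of the even side with triangle inequalities~\eqref{eq T1} on consecutive pairs (using $X_{ij}=0$), and either apply~\eqref{eq HoleExternal1} to the odd side or recurse, with the same coefficient count. You explicitly note that the odd side may be a triangle, where the hole inequality reduces to~\eqref{eq cliqueExternal1}; the paper's proof relies on this only implicitly, through its definition of holes as chordless cycles with at least $3$ vertices.
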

\begin{proof}
    Without loss of generality, we can label the vertices of $\mc{C}$ as $1, 2, \dots, |\mc{C}|$ so that $\{i,i+1\} \in \mc{E}$ for each $i \in \mc{C}$ (where we use the convention that addition is taken modulo $|\mc{C}|$ when dealing with cycles) and that $\{1,p\} \in \mc{E}$ for some even $p \in \{4, \hdots, |\mc{C}|-1\}$.
    Note that $\mc{C}' = \{p, \hdots, |\mc{C}|, 1\}$ is an odd cycle, so $\sum_{i \in \{p, \hdots, |\mc{C}|, 1\}} X_{i\ell} \leq \frac{|\mc{C}|-p+1}{2}X_{\ell\ell}$. 
    Moreover, for each even $i \in \{2, \hdots, p-2\}$ we can combine the triangle inequality~\eqref{eq T1} for triplet $\{i,i+1, \ell\}$ with the constraint $X_{i,i+1} = 0$ to obtain that $X_{i\ell} + X_{i+1, \ell} \leq X_{\ell\ell}$. 
    Summing all $\frac{p-2}{2}$ such inequalities and the inequality for the odd cycle $\mc{C}'$, we obtain $\sum_{i \in \mc{C}} X_{i\ell} \leq \frac{|\mc{C}|-1}{2}X_{\ell\ell}$.
    If $\mc{C}'$ is chordless, the proposition follows. 
    Otherwise, the same decomposition can be applied recursively.
\end{proof}

\section{The cutting-plane ADMM} \label{Sec: CP-ADMM}
This section describes the CP-ADMM that is used to compute valid upper bounds on $\alpha_k(\mc{G})$.
In Section~\ref{Sec: ADMM scheme}, we present the ADMM scheme for solving SDP relaxations of \eqref{BSDP}, and in Section~\ref{Sec: Cutting-plane strategy}, we explain how this scheme is embedded into a cutting-plane framework.
Whereas our implementation incorporates the valid inequalities derived in the previous section, our algorithm is described independently of the specific inequalities used, emphasizing that the CP-ADMM can handle any valid inequality of the form $\IP{A}{X} \leq b$.

\subsection{The ADMM scheme} \label{Sec: ADMM scheme}
At a given iteration of the CP-ADMM, we consider a fixed index set $\mc{I}$ of valid inequalities for \eqref{BSDP}, and apply the ADMM to solve
\begin{equation} \tag{SDP$_k$-$\mc{I}$} \label{SDP-I}
    \theta_k^\mc{I}(\mc{G}) \eqdef  \max\left\{ \IP{I_n}{X}: X \in \mc{F}_\eqref{SDP}, \IP{A_i}{X} \leq b_i \,\, \forall i \in \mc{I}\right\}.
\end{equation}

To apply the ADMM, we first reformulate \eqref{SDP-I} by separating the affine constraints from the PSD constraint.
To this end, let
\begin{equation}
    \aug{\mc{X}}_\mc{I} \eqdef \left\{\begin{pmatrix} k & \diag{X}^\top \\ \diag{X} & X\end{pmatrix} \in \mathbb{S}^{n+1}: X_{ij} = 0 \,\,\forall \{i,j\} \in \mc{E},\,\, \textbf{0}_n \leq X \leq J_n,\,\, \IP{A_i}{X} \leq b_i \,\,\forall i \in \mc{I}\right\}, \label{eq: def affine set}
\end{equation}
and let $\aug{I} \eqdef \begin{pmatrix} 0 & 0_n^\top \\ 0_n & I_n \end{pmatrix}$,
from where it follows that
\begin{equation}
    \theta_k^\mc{I}(\mc{G}) = \max \left\{\IP{\aug{I}}{\aug{X}}: \aug{X}=Y,  \,\, \aug{X} \in \aug{\mc{X}}_\mc{I},  \,\, Y \in \mathbb{S}_+^{n+1}\right\}. \label{ADMM split}
\end{equation}
Note that $\aug{\mc{X}}_{\mc{I}}$ includes the redundant upper bound constraints $X_{ij} \leq 1$ for off-diagonal entries $i, j \in [n]$ with $i \neq j$. 
Despite being redundant for \eqref{SDP-I}, these constraints do fasten the convergence of the ADMM, see e.g., \cite{deMeijer2025ADMMQMSTP, deMeijer2023ADMMPartitioning, Li2021PRSMMinCut, Oliveira2018ADMMForQAP}.

Next, we consider the augmented Lagrangian function of~\eqref{ADMM split} with respect to the coupling constraint $\aug{X}=Y$, which is defined as
\begin{equation}
    L_{\beta}(\aug{X}, Y, \Lambda) \eqdef \IP{\aug{I}}{\aug{X}} - \IP{\Lambda}{\aug{X}-Y} - \frac{\beta}{2}\norm{\aug{X}-Y}^2, \label{ADMM augmented Lagrangian}
\end{equation}
where $\Lambda \in \mathbb{S}^{n+1}$ is the dual variable associated to the coupling constraint $\aug{X}=Y$, and $\beta > 0$ is a fixed penalty parameter (we specify our parameter values in Table~\ref{tab:parameters CP-ADMM} in Section~\ref{Sec: Computational experiments}). 
Starting from an initial triple $(\aug{X}^0, Y^0, \Lambda^0)$, and given some stepsize parameter $\gamma$, the ADMM iteratively updates the current triple $(\aug{X}^t, Y^t, \Lambda^t)$ using the following update scheme:
\begin{align}
    \aug{X}^{t+1} \eqdef& \argmax_{\aug{X} \in \aug{\mc{X}}_\mc{I}} L_\beta(\aug{X}, Y^t, \Lambda^t) \label{ADMM-X-update}\\
    Y^{t+1} \eqdef& \argmax_{Y \in \mathbb{S}_+^{n+1}} L_\beta(\aug{X}^{t+1}, Y, \Lambda^{t})\label{ADMM-Y-update}\\
    \Lambda^{t+1} \eqdef& \Lambda^t + \gamma \cdot \beta (\aug{X}^{t+1}-Y^{t+1}). \label{ADMM-L-update}
\end{align}
When $\gamma \in (0, \frac{1+\sqrt{5}}{2})$, the iterates $(\aug{X}^t, Y^t)$ converge with rate $\mc{O}(1/t)$ to an optimal solution of~\eqref{ADMM split}, see \cite{He2016ConvergenceSymmetricADMM}.

The efficiency of the ADMM depends on the complexity of the subproblems~\eqref{ADMM-X-update} and~\eqref{ADMM-Y-update}.
It can be shown that these problems reduce to the following projection problems, see \cite{Oliveira2018ADMMForQAP}:
\begin{align}
    &\argmax_{\aug{X} \in \aug{\mc{X}}_\mc{I}} L_\beta(\aug{X}, Y^t, \Lambda^t) 
    = \argmin_{\aug{X} \in \aug{\mc{X}}_\mc{I}} \norm{\aug{X}-\left(Y^t+\frac{1}{\beta}\left(\aug{I} - \Lambda^t\right)\right)}^2
    = \mc{P}_{\aug{\mc{X}}_\mc{I}}\left(Y^t + \frac{1}{\beta}\left(\aug{I} - \Lambda^t\right)\right) \label{ADMM-X-projection}\\
    &\argmax_{Y \in \mathbb{S}^{n+1}_+} L_\beta(\aug{X}^{t+1}, Y, \Lambda^t) = \argmin_{Y \in \mathbb{S}^{n+1}_+} \norm{Y- \left(\aug{X}^{t+1} + \frac{1}{\beta}\Lambda^t\right)}^2 = \mc{P}_{\mathbb{S}^{n+1}_+}\left(\aug{X}^{t+1} + \frac{1}{\beta}\Lambda^t\right).\label{ADMM-Y-projection}
\end{align}
The projection of a symmetric matrix onto $\mathbb{S}^{n+1}_+$ is obtained through a spectral decomposition. 
This decomposition is carried out in single precision instead of double precision to reduce its computational cost, see \cite{Sinjorgo2025ADMMStability}.

We next discuss the projection onto $\aug{\mc{X}}_\mc{I}$.
Note that every matrix $\aug{X} \in \aug{\mc{X}}_\mc{I}$ is uniquely determined by the entries $\aug{X}_{ij}$ with $1 \leq i \leq j\leq n, \{i,j\} \not \in \mc{E}$. 
Therefore, we can reduce the projection problem onto $\aug{\mc{X}}_\mc{I}$ to a \textit{weighted} projection problem (see Section~\ref{Sec: notation}) where we only consider the $m \eqdef n + \binom{n}{2}-|\mc{E}|$ free entries. 
Concretely, given a matrix $\aug{U} = \begin{pmatrix} u_{00} & u_0^\top \\ u_0 & U \end{pmatrix} \in \mathbb{S}^{n+1}$, its projection onto $\aug{\mc{X}}_\mc{I}$ proceeds as follows:
\begin{enumerate}
    \item To account for the zeroth row and column, replace the diagonal of $U$ by $\frac{1}{3}\diag{U} + \frac{2}{3}u_0$, and then construct the vector $u \in \mathbb{R}^m$ that contains only the free entries of the newly obtained matrix.
    \item Find $u' \eqdef \mc{P}^w_{\mc{X}^{vec}_\mc{I}}(u)$, where weight vector $w \in \{2,3\}^m$ assigns weight $2$ to upper-triangular entries (accounting for the lower-triangular entries) and weight $3$ to diagonal entries (accounting for the entries on the zeroth row and column), and
    \begin{equation}
        \mc{X}^{vec}_\mc{I} \eqdef \left \{x \in \mathbb{R}^m: 0_m \leq x \leq e_m,\quad a_i^\top x \leq b_i \quad \forall i \in \mc{I} \right \}.
    \end{equation}
    The vectors $a_i \in \mathbb{R}^m$ are obtained from the matrices $A_i \in\mathbb{S}^{n+1}$.
    \item Map the vector $u' \in \mc{X}^{vec}_\mc{I}$ to the corresponding matrix in $\aug{\mc{X}}_\mc{I}$.
\end{enumerate}
To find $\mc{P}^w_{\mc{X}^{vec}_\mc{I}}(u)$, we first rewrite $\mc{X}^{vec}_\mc{I}$ as the intersection of simpler sets:
\begin{equation}
    \mc{X}^{vec}_\mc{I} = [0,1]^m \cap \left(\bigcap_{i \in \mc{I}}\mc{H}_i \right), \quad \text{where } \mc{H}_i = \left\{x \in \mathbb{R}^m: a_i^\top x \leq b_i\right\} \quad \forall i \in \mc{I}.
\end{equation}
These sets are all easy to project onto. Indeed, the weighted projection onto the box is given by
\begin{equation}
    \mc{P}^w_{[0,1]^m}(x) = \mc{P}_{[0,1]^m}(x) = \left(\min\{\max\{x_j, 0\}, 1\}\right)_{j \in [m]}, \label{eq: box projection}
\end{equation}
and for each valid inequality $i \in \mc{I}$, the weighted projection onto its corresponding halfspace is given by
\begin{equation}
    \mc{P}^w_{\mc{H}_i}(x) = x - \frac{\max\{a_i^\top x - b_i, 0\}}{a_i^\top\Diag{w}^{-1} a_i} \Diag{w}^{-1}a_i. \label{eq: cut projection}
\end{equation}
When $\mc{I} = \emptyset$, we have that $\mc{X}^{vec}_\mc{I} = [0,1]^m$, so the projection is straightforward.
Otherwise, we apply Dykstra's cyclic projection algorithm \citep{Boyle1986Dykstra}, which is an iterative algorithm for projecting onto the intersection of convex sets.
This algorithm was successfully used within the CP-ADMM algorithms from \cite{deMeijer2025ADMMQMSTP, deMeijer2021QCCP, deMeijer2023ADMMPartitioning}.
Importantly, these papers accelerate Dykstra's algorithm by partitioning the valid inequalities into clusters of non-overlapping inequalities so that each iteration of Dykstra's algorithm requires only one projection per cluster, rather than one projection per inequality.
We follow this approach, and refer to the aforementioned papers for the details.

\subsection{The cutting-plane framework} \label{Sec: Cutting-plane strategy}
We are now ready to present the full algorithm in which the ADMM is embedded in a cutting-plane framework, see Algorithm~\ref{Alg: ADMM}.

\begin{algorithm}[H]
\caption{CP-ADMM}
\label{Alg: ADMM}
\begin{algorithmic}[1]
\footnotesize
\Require{$\mc{G} = (\mc{V}, \mc{E})$, $k$, parameter values from Table~\ref{tab:parameters CP-ADMM} in Section~\ref{Sec: Computational experiments}}
\State Initialization: set $\aug{X}^0 \leftarrow \begin{pmatrix} k & e_n^\top \\ e_n & I_n \end{pmatrix}$, $Y^0 \leftarrow \begin{pmatrix} k & e_n^\top \\ e_n & I_n \end{pmatrix}$, $\Lambda^0 \leftarrow \mathbf{0}_{n+1}$, $t \leftarrow 0$, $\mc{I} \leftarrow \emptyset$
\While{the outer stopping criteria are not met} \Comment{see Section~\ref{Sec: Stopping criteria}}
    \While{the inner stopping criteria are not met} \Comment{see Section~\ref{Sec: Stopping criteria}}
        \State $\aug{X}^{t+1} \leftarrow \mc{P}_{\aug{\mc{X}}_\mc{I}}\left(Y^t + \frac{1}{\beta}\left(\aug{I} - \Lambda^t\right)\right)$
        \State $Y^{t+1} \leftarrow \mc{P}_{\mathbb{S}^{n+1}_+}\left(\aug{X}^{t+1} + \frac{1}{\beta}\Lambda^t\right)$
        \State $\Lambda^{t+1} \leftarrow \Lambda^t + \gamma \cdot \beta (\aug{X}^{t+1}-Y^{t+1})$
        \State $t \leftarrow t+1$
    \EndWhile
    \State \textbf{end}
    \State $UB \leftarrow \max_{\aug{X} \in \aug{\mc{X}}_\mc{I}}\IP{\aug{I}-\mc{P}_{\mathbb{S}^{n+1}_{-}}(\Lambda^t)}{\aug{X}}$  \Comment{see Section~\ref{Sec: Valid upper bounds}}
    \State Identify inequalities that are violated by $\aug{X}^{t}$ and add a selection of those to $\mc{I}$ \Comment{see Section~\ref{Sec: Cut separation and selection}}
\EndWhile
\State \textbf{end}
\Ensure{$UB$, $\aug{X}^t$}
\end{algorithmic}
\end{algorithm}

At each iteration of the outer while-loop (further referred to as an \textit{outer iteration}), we apply the ADMM to approximately solve \eqref{SDP-I} for a given set of valid inequalities $\mc{I}$.
The ADMM is executed within the inner while-loop until its stopping criteria are met (see Section~\ref{Sec: Stopping criteria}), yielding a final ADMM iterate $(\aug{X}^t, Y^t, \Lambda^t)$ for the current set $\mc{I}$. 
Subsequently, we check whether $\aug{X}^t$ violates any valid inequalities that are not yet included in $\mc{I}$.
If so, a selection of those is added to $\mc{I}$ (see Section~\ref{Sec: Cut separation and selection}), after which we apply the ADMM to solve the strengthened relaxation in the next outer iteration.
This procedure is repeated until some stopping criterion for the outer while-loop is met (see Section~\ref{Sec: Stopping criteria}).
Along the way, we find stronger valid upper bounds on $\alpha_k(\mc{G})$ using a method that is described in Section~\ref{Sec: Valid upper bounds}.

The algorithm is initialized with $\mc{I} = \emptyset$, meaning that the first outer iteration amounts to solving \eqref{SDP}. 
This initial iteration is significantly simpler than the subsequent ones, as the projection onto $\mc{X}^{vec}_\emptyset$ is a projection onto the box and does not require Dykstra's cyclic projection algorithm.
Therefore, initially the most time-consuming step is the projection onto $\mathbb{S}^{n+1}_+$, but once we start adding cuts, the bottleneck becomes Dykstra's algorithm. 
In order to improve computational efficiency, we employ a warm-start strategy where each outer iteration is initialized with the final solution from the previous outer iteration.

\subsubsection{Stopping criteria} \label{Sec: Stopping criteria}
We terminate an inner while-loop when the primal and dual residual satisfy
$$\max\left\{\frac{\norm{X^{t+1}-Y^{t+1}}}{1 + \norm{X^{t+1}}}, \beta \frac{\norm{X^{t+1}-X^t}}{1 + \norm{X^{t+1}}}\right\} \leq \eps_\texttt{ADMM},$$
see e.g., \cite{Oliveira2018ADMMForQAP}, 
or when we reach a maximum of \texttt{maxInnerIter} iterations for the inner while-loop.
In the final outer iteration, we decrease $\eps_\texttt{ADMM}$ (see also \cite{deMeijer2023ADMMPartitioning}) and increase \texttt{maxInnerIter} to obtain a more accurate final valid upper bound.

We terminate the outer while-loop (and thus the entire algorithm) when
(i) the current valid upper bound rounded down to the nearest integer equals some known lower bound,
(ii) the improvement of the current valid upper bound over the previous one is less than \texttt{minImpr},
(iii) there are fewer than \texttt{minIneq} violated inequalities,
or (iv) a time limit \texttt{timeLimitGlobal} is reached.

\subsubsection{Valid upper bounds} \label{Sec: Valid upper bounds}
We obtain valid upper bounds on $\alpha_k(\mc{G})$ from the ADMM's output using a procedure proposed by \citet{Oliveira2018ADMMForQAP}.
The dual function of~\eqref{ADMM split}, with dual variable $\Lambda \in \mathbb{S}^{n+1}_-$, is given by
\begin{equation}
    d(\Lambda)
    \eqdef \max_{\aug{X} \in \aug{\mc{X}}_\mc{I}, Y \in \mathbb{S}^{n+1}_+} \IP{\aug{I}}{\aug{X}} - \IP{\Lambda}{\aug{X}-Y}
    = \max_{\aug{X} \in \aug{\mc{X}}_\mc{I}} \IP{\aug{I}-\Lambda}{\aug{X}}. \label{ADMM valid UB} 
\end{equation}
It follows from weak duality that $d(\Lambda) \geq \theta_k^{\mc{I}}(\mc{G})$
for any $\Lambda \in \mathbb{S}^{n+1}_-$.
In particular, this holds for $\Lambda = \mc{P}_{\mathbb{S}^{n+1}_{-}}(\Lambda^t)$, where $\Lambda^t$ is any intermediate iterate of the dual variable in the ADMM.

Note that $d(\mc{P}_{\mathbb{S}^{n+1}_{-}}(\Lambda^t))$ can be computed in polynomial time as $\max_{\aug{X} \in \aug{\mc{X}}_\mc{I}} \IP{\aug{I}-\mc{P}_{\mathbb{S}^{n+1}_{-}}(\Lambda^t)}{\aug{X}}$ is a linear program and $\mc{P}_{\mathbb{S}^{n+1}_{-}}(\Lambda^t)$ can be found through a spectral decomposition of $\Lambda^t$. When $\mc{I} = \emptyset$, this linear program has a closed-form solution, namely the matrix $\aug{X} = \begin{psmallmatrix} k & \diag{X}^\top \\ \diag{X} & X\end{psmallmatrix}$ with $X$ given by
\begin{equation} 
        X_{ij} = 
        \begin{cases}
            0 &\text{if } \{i,j\} \in \mc{E}, \\
            \mathbbm{1}_{\{C_{ii} + 2C_{i0} > 0\}} &\text{if } i=j, \\
            \mathbbm{1}_{\{C_{ij} > 0\}} &\text{otherwise,}
        \end{cases} \quad\forall i, j \in [n],
\end{equation}
where $C = \aug{I}-\mc{P}_{\mathbb{S}^{n+1}_{-}}(\Lambda^t)$.

\subsubsection{Cut separation and selection} \label{Sec: Cut separation and selection}
We consider the following classes of valid inequalities and corresponding separation procedures:
\begin{itemize}
    \item Generalized triangle inequalities~\eqref{eq T1} and~\eqref{eq T2}: we enumerate all $\binom{n}{3}$ triplets of vertices.
    \item Generalized clique inequalities~\eqref{eq cliqueExternal1} and~\eqref{eq cliqueUnion}: to keep separation tractable, we initially restrict to maximal cliques up to size $5$ and all cliques of size $6$, where our focus on maximal cliques is justified by Propositions~\ref{Prop: only maximal cliqueExternal1} and~\ref{Prop: only maximal cliqueUnion}. 
    We enumerate these cliques using a variant of the Bron-Kerbosch algorithm \cite{Bron1973MaxCliques} with a time limit \texttt{timeLimitCliques}.
    For inequalities~\eqref{eq cliqueExternal1} we consider at most \texttt{maxCliques} cliques per {outer iteration} (where we randomly select a subset of the cliques if there are more than \texttt{maxCliques} options).
    Similarly, for inequalities~\eqref{eq cliqueUnion} we consider at most \texttt{maxCliquePairs} pairs of cliques per outer iteration.

    Whenever we identify a violated inequality of the form~\eqref{eq cliqueExternal1} for a non-maximal clique $\mc{Q}$ of size $6$ and external vertex $\ell$, we greedily extend the clique to a larger clique to obtain a stronger cut.     
    Namely, starting from $\mc{Q}$, we iteratively add the vertex $i$ with the highest value of $X_{i \ell}$ among the vertices that are adjacent to all vertices in the current clique, continuing until the clique is maximal.
    \item Generalized odd-hole inequalities~\eqref{eq HoleExternal1}: to keep separation tractable, we restrict to $5$-holes. 
    Recall from Propositions~\ref{Prop: no even holes} and~\ref{Prop: no cycles with chords} that we do not need to consider even holes or cycles with chords. 
    We use a time limit \texttt{timeLimitHoles} for enumerating the $5$-holes, and consider at most \texttt{maxHoles} 5-holes per outer iteration.
\end{itemize}
At every outer iteration of the CP-ADMM, we add up to \texttt{maxIneq} of the most violated inequalities, where an inequality only counts as violated if its violation is at least \texttt{minViol}.
Preliminary experiments showed that the generalized clique inequalities~\eqref{eq cliqueExternal1} are the most effective cuts among those considered. 
Therefore, we implemented a two-phase approach in which we first consider only inequalities~\eqref{eq cliqueExternal1}. 
Once either the improvement of the current valid upper bound over the previous one is less than \texttt{minImprPhase1} or there are fewer than \texttt{minIneqPhase1} violated inequalities of type~\eqref{eq cliqueExternal1}, we transition to a second phase where the remaining inequality types are also considered.

Finally, we limit the maximum number of new cuts in which each variable may appear to\linebreak \texttt{maxCutsPerVar}. This promotes cut diversity and makes it easier to partition the cuts into a small number of clusters of non-overlapping cuts, speeding up the algorithm considerably. To the best of our knowledge, we are the first to implement this feature in the context of the CP-ADMM.

\section{The integer ADMM} \label{Sec: INT-ADMM}
In this section, we present the INT-ADMM for computing feasible solutions to \eqref{BSDP}. Our algorithm is based on the $\ell_p$-Box ADMM from \cite{Wu2019LpBoxADMM}. 
That algorithm is designed to find feasible solutions to arbitrary binary optimization programs, and was successfully used e.g., in \cite{Jiao2021PenalizedLpBox} and \cite{Wu2018LpBoxUsage}.
The $\ell_p$-Box ADMM relies on the observation that, for any $p > 0$,
\begin{equation}
    \{0,1\}^n = [0,1]^n \cap \left\{x \in \mathbb{R}^n: \norm{x - \frac{1}{2}e}_p^p = \frac{n}{2^p}\right\}. \label{integer = box union sphere}
\end{equation}
That is, the set of binary vectors can be written as the intersection of the unit box with the $\ell_p$-sphere centered at $\frac{1}{2}e$ with radius $\frac{n^{1/p}}{2}$. 
For $p=2$, there exists a closed-form expression for projecting onto a sphere.

Extending this idea to \eqref{BSDP}, we first split its constraints into three components, each admitting an efficient projection: we have a variable $\aug{X}$ that satisfies the affine (including box) constraints, a variable $Y$ that satisfies the PSD constraint, and in addition, a third variable $Z$ that satisfies a sphere constraint.
In more detail, given that we work in $\mathbb{S}^{n+1}$ rather than $\mathbb{R}^n$, and also that entry $(0,0)$ should be equal to $k$, we consider the $\ell_2$-sphere in $\mathbb{S}^{n+1}$ centered at $C \eqdef \frac{1}{2}J_{n+1} + \begin{pmatrix} k & 0_n^\top \\ 0_n & \textbf{0}_n \end{pmatrix}$ with radius $(n+1)/2$:
\begin{equation}
    \mc{Z} = \left\{Z \in \mathbb{S}^{n+1} : \norm{Z - C}^2 = \frac{(n+1)^2}{4}\right\}.
\end{equation}
Note that $\mc{Z}$ is a non-convex set, which impacts the convergence of the INT-ADMM, as explained in Section~\ref{Sec: Convergence INT-ADMM}.
Then, letting $\aug{\mc{X}} \eqdef \aug{\mc{X}}_\emptyset$ (see~\eqref{eq: def affine set}), we can rewrite \eqref{BSDP} as follows:
\begin{equation}
    \alpha_k(\mc{G}) = \max \left\{\IP{\aug{I}}{\aug{X}}: \aug{X}=Y, \aug{X} = Z, \aug{X} \in \aug{\mc{X}}, Y \in \mathbb{S}_+^{n+1}, Z \in \mc{Z}\right\}. \label{INT-ADMM split}
\end{equation}

Next, we consider the augmented Lagrangian function of~\eqref{INT-ADMM split} with respect to the two coupling constraints $\aug{X} = Y$ and $\aug{X} = Z$, which is given by
\begin{equation}
    L_{\beta_Y, \beta_Z}(\aug{X}, Y, Z, \Lambda, M) \eqdef \IP{\aug{I}}{\aug{X}} - \IP{\Lambda}{\aug{X}-Y} - \IP{M}{\aug{X}-Z} - \frac{\beta_Y}{2}\norm{\aug{X}-Y}^2 - \frac{\beta_Z}{2}\norm{\aug{X}-Z}^2, \label{INT-ADMM augmented Lagrangian}
\end{equation}
where $\Lambda, M \in \mathbb{S}^{n+1}$ are the dual variables associated to the coupling constraints $\aug{X}=Y$ and $\aug{X}=Z$, respectively, and $\beta_Y, \beta_Z > 0$ are fixed penalty parameters.
We then obtain the following update rules:
\begin{align}
    \aug{X}^{t+1} &= \argmax_{\aug{X} \in \aug{\mc{X}}} L_{\beta_Y, \beta_Z}(\aug{X},Y^t,Z^t,\Lambda^t,M^t) \label{INT-ADMM X-update}\\
    Y^{t+1} &= \argmax_{Y \in \mathbb{S}^{n+1}_+} L_{\beta_Y, \beta_Z}(\aug{X}^{t+1},Y,Z^t,\Lambda^t,M^t) \label{INT-ADMM Y-update}\\
    Z^{t+1} &= \argmax_{Z \in \mc{Z}} L_{\beta_Y, \beta_Z}(\aug{X}^{t+1},Y^{t+1},Z,\Lambda^t,M^t) \label{INT-ADMM Z-update}\\
    \Lambda^{t+1} &= \Lambda^t + \beta_Y (\aug{X}^{t+1}-Y^{t+1}) \label{INT-ADMM L-update}\\
    M^{t+1} &= M^t + \beta_Z (\aug{X}^{t+1}-Z^{t+1}). \label{INT-ADMM M-update}
\end{align}
The maximization problems~\eqref{INT-ADMM X-update}-\eqref{INT-ADMM Z-update} can be rewritten as the following projection problems:
\begin{align}
    & \argmax_{\aug{X} \in \aug{\mc{X}}} L_{\beta_Y, \beta_Z}(\aug{X},Y^t,Z^t,\Lambda^t,M^t) = \mc{P}_{\mc{\aug{X}}}\left(\frac{1}{\beta_Y + \beta_Z}\left(\beta_Y Y^t + \beta_Z Z^t + \aug{I} - \Lambda^t - M^t\right)\right) \label{INT-ADMM X-projection}\\
    & \argmax_{Y \in \mathbb{S}^{n+1}_+} L_{\beta_Y, \beta_Z}(\aug{X}^{t+1},Y,Z^t,\Lambda^t,M^t) = \mc{P}_{ \mathbb{S}^{n+1}_+}\left(\aug{X}^{t+1} + \frac{1}{\beta_Y}\Lambda^t\right) \label{INT-ADMM Y-projection}\\
    & \argmax_{Z \in \mc{Z}} L_{\beta_Y, \beta_Z}(\aug{X}^{t+1},Y^{t+1},Z,\Lambda^t,M^t) = \mc{P}_{\mc{Z}}\left(\aug{X}^{t+1} + \frac{1}{\beta_Z}M^t\right). \label{INT-ADMM Z-projection}
\end{align}
The projections onto $\aug{\mc{X}}$ and $\mathbb{S}^{n+1}_+$ were already discussed in Section~\ref{Sec: ADMM scheme}.
The projection onto $\mc{Z}$ admits the following closed-form expression:
\begin{equation}
    \mc{P}_{\mc{Z}}(Z) = \frac{n+1}{2} \frac{Z-C}{\norm{Z-C}} + C.
\end{equation}

\subsection{Achieving convergence and escaping local optima} \label{Sec: Convergence INT-ADMM}
Given that $\mc{Z}$ is non-convex, the INT-ADMM is not guaranteed to converge, and if it converges, the obtained solution is not necessarily optimal for~\eqref{INT-ADMM split}. 
Preliminary experiments showed that the solution quality strongly depends on the choice of $\beta_Y$ and $\beta_Z$. 
As proposed in~\cite{Wu2019LpBoxADMM}, we set $\beta_Y = \beta_Z \eqdef \beta$, and we gradually increase $\beta$ over the course of the algorithm. 
We initialize $\beta$ to $\beta^0$ and then multiply by a factor $\beta_{incr} > 1$ after every iteration. 
Therefore, the emphasis initially lies on obtaining a high objective value, while over time the focus shifts to enforcing feasibility.
Using the parameter values specified in Table~\ref{tab:parameters INT-ADMM} in Section~\ref{Sec: Computational experiments}, our algorithm consistently converged on all benchmark instances to a solution satisfying the following criterion:
$$\max\left\{\frac{\norm{\aug{X}^t- Y^t}}{1 + \norm{\aug{X}^t}}, \frac{\norm{\aug{X}^t - Z^t}}{1 + \norm{\aug{X}^t}}\right\} \leq \eps_\texttt{INT-ADMM}.$$
Upon meeting this criterion, the matrix $\aug{X}^t$ (whose entries are close to integer) is rounded to a fully integer matrix, after which its feasibility is verified.

Our algorithm also contains a novel mechanism for escaping local optima. Namely, whenever the above convergence criterion is satisfied, we save the feasible solution found, and then multiply $\beta$ by a factor $\beta_{decr} \in (0,1)$ (taking care that $\beta$ does not drop below a minimum value $\beta_{min})$, thereby shifting the emphasis back to the objective value. 
The INT-ADMM then continues for at least $10$ more iterations until the convergence criterion is met again. 
This process is repeated until the algorithm has not improved the best solution so far for \texttt{maxTriesWithoutImpr} consecutive tries, or the current solution value matches a known upper bound.

\subsection{Reducing runtime}
Finally, we discuss some additional features of the INT-ADMM that are aimed to reduce runtime.

First, we initialize the INT-ADMM using the final solution $\aug{X}^*$ obtained from the CP-ADMM. 
Specifically, we set $\aug{X}^0 = \mc{P}_{\aug{\mc{X}}}(\aug{X}^*)$, $Y^0 = \mc{P}_{\mathbb{S}^{n+1}_+}(\aug{X}^*)$, $Z^0 = \mc{P}_{\mc{Z}}(\aug{X}^*)$, $\Lambda^0 = \beta_Y(\aug{X}^0 - Y^0)$ and $M^0 = \beta_Z(\aug{X}^0 - Z^0)$.
Furthermore, we experimented with additionally including in $\mc{Z}$ some of the equality constraints that are already in $\aug{\mc{X}}$.
Among the tested variants, only adding the constraint $Z_{00} = k$ proved beneficial.
The projection onto $\mc{Z}' \eqdef \{Z \in \mc{Z}: Z_{00} = k\}$ is given by 
\begin{equation}
    \mc{P}_{\mc{Z}'}(Z) = \frac{\sqrt{(n+1)^2-1}}{2} \frac{Z'-C}{\norm{Z'-C}} + C - \begin{pmatrix} 1/2 & 0_n^\top \\ 0_n & \textbf{0}_n \end{pmatrix}, \quad\text{where } Z' = Z + \begin{pmatrix} \frac{1}{2} + k -Z_{00} & 0_n^\top \\ 0_n & \textbf{0}_n \end{pmatrix}.
\end{equation}
Finally, observe that the $Y$- and $Z$-updates \eqref{INT-ADMM Y-projection} and~\eqref{INT-ADMM Z-projection} are independent of each other and could therefore be carried out in parallel. 
In our implementation, however, these updates are performed sequentially.

\section{Computational experiments} \label{Sec: Computational experiments}
This section presents the results of our computational experiments.
In Section~\ref{Sec: Computational setup}, we describe our computational setup.
In Section~\ref{Sec: Comparison with methods from the literature}, we compare the performance of our methods with methods from the literature, and we evaluate the importance of the mechanism to escape local optima in the INT-ADMM.
In Sections~\ref{Sec: Effect of valid inequalities} and~\ref{Sec: maxCutsPerVar}, we compare various configurations of the CP-ADMM, focusing, respectively, on the valid inequalities considered and on the effect of limiting the maximum number of new cuts per variable.
Finally, in Section~\ref{Sec: Chromatic number}, we use the upper bounds obtained with the CP-ADMM to derive lower bounds on the chromatic number of benchmark graphs.

\subsection{Computational setup} \label{Sec: Computational setup}
In Sections~\ref{Sec: Comparison with methods from the literature}--\ref{Sec: maxCutsPerVar}, we consider 68 benchmark instances used in \cite{Campelo2010ILPLagr} and \cite{Januschowski2011ILPSym}.
Specifically, we consider instances whose graphs originate from the Second DIMACS Implementation Challenge for the maximum clique problem \citep{Johnson1996DIMACS} or from the COLOR02 symposium \citep{Johnson2002COLOR02}.
All graphs in our tables that end with the letter ``c'' are complements of the original DIMACS graphs.
Many of these instances were also used in \cite{Kuryatnikova2022MkCSProblem}. 
We only consider instances that were not solved to optimality in that paper, since our upper bounds, when rounded down to the nearest integer, are at least as strong as the bounds reported there on all benchmark instances.

Our algorithms were implemented in MATLAB R2024b and the code is available upon request.
The experiments were executed on a virtual machine with an AMD EPYC-Rome processor, 2.0 GHz with 64 GB of RAM, running Ubuntu 22.04.5 LTS, and using a single CPU thread.
Linear programs were solved with Gurobi 13.00 using the parser CVX, version 2.2 \cite{CVX}.

Tables~\ref{tab:parameters CP-ADMM} and~\ref{tab:parameters INT-ADMM} report the parameter values used for the CP-ADMM and INT-ADMM, respectively.
For $\eps_\texttt{ADMM}$ and \texttt{maxInnerIter}, we present in parentheses the alternative values used in the final outer iteration of the CP-ADMM.

\begin{table}[H]
\centering
\caption{Parameter values for the CP-ADMM used in the computational experiments}
\label{tab:parameters CP-ADMM}
\begin{tabular}{@{}llllllll@{}}
\toprule
Parameter & Value &  & Parameter & Value &  & Parameter & Value \\ \cmidrule(r){1-2} \cmidrule(lr){4-5} \cmidrule(l){7-8} 
$\beta$ & $1.2$ &  & \texttt{minIneq} & $n/4$ &  & \texttt{timeLimitGlobal} & $3600$s \\
$\gamma$ & $1.617$ &  & \texttt{minIneqPhase1} & $n$ &  & \texttt{timeLimitCliques} & $10$s \\
$\eps_\texttt{ADMM}$ & $10^{-4}$ ($\to 10^{-5}$) &  & \texttt{maxIneq} & $5n$ &  & \texttt{timeLimitHoles} & $10$s \\
$\eps_{Dyk}$ & $10^{-2}$ &  & \texttt{maxCutsPerVar} & $5$ &  & \texttt{maxCliques} & $100{,}000$ \\
\texttt{maxInnerIter} & $2000$ ($\to 10{,}000$) &  & \texttt{minImpr} & $0.025$ &  & \texttt{maxCliquePairs} & $100{,}000$ \\
\texttt{minViol} & $10^{-2}$ &  & \texttt{minImprPhase1} & $0.25$ &  & \texttt{maxHoles} & $100{,}000$ \\ \bottomrule
\end{tabular}
\end{table}

\begin{table}[H]
\centering
\caption{Parameter values for the INT-ADMM used in the computational experiments.}
\label{tab:parameters INT-ADMM}
\begin{tabular}{@{}llllllll@{}}
\toprule
Parameter & Value &  & Parameter & Value &  & Parameter & Value \\ \cmidrule(r){1-2} \cmidrule(lr){4-5} \cmidrule(l){7-8} 
$\beta^0$ & $0.05$ &  & $\beta_{decr}$ & $0.5$ &  & $\eps_\texttt{INT-ADMM}$ & $10^{-3}$ \\
$\beta_{incr}$ & $1.0001$ &  & $\beta_{min}$ & 0.001 &  & \texttt{maxTriesWithoutImpr} & $3$ \\ \bottomrule
\end{tabular}
\end{table}

We also ran two heuristics from \cite{Kuryatnikova2022MkCSProblem} on our machine.
The first heuristic (HEUR1) reduces a M$k$CS instance with graph $\mc{G}$ to a maximum stable set instance defined on the Cartesian product of the complete graph on $k$ vertices and $\mc{G}$.
The resulting instance is then solved using a stable set heuristic.
The second heuristic (HEUR2) is a tabu search heuristic.
The maximum of the two obtained lower bounds is used for the optimality check in the stopping criteria of the CP-ADMM (see Section~\ref{Sec: Stopping criteria}).
Even though the INT-ADMM often provides better lower bounds than the two heuristics, its higher computational cost makes it less suitable for use in the stopping criteria of the CP-ADMM.
The upper bound obtained using the CP-ADMM is used for the optimality check in the INT-ADMM (see Section~\ref{Sec: Convergence INT-ADMM}).

\subsection{Comparison with methods from the literature} \label{Sec: Comparison with methods from the literature}
In Tables~\ref{Table: ResultsCC} and~\ref{Table: ResultsJP}, we compare the performance of our methods with methods from the literature on the instances considered in \cite{Campelo2010ILPLagr} and \cite{Januschowski2011ILPSym}, respectively.

In both tables, each row corresponds to an instance specified by the first four columns: the name of the graph, its number of vertices ($n$) and its density (given by $\rho = |\mc{E}|/\binom{n}{2}$), and the number of colors ($k$).
For the instances from \cite{Januschowski2011ILPSym}, Table~\ref{Table: ResultsJP} additionally reports the optimal values $\alpha_k(\mc{G})$ provided in that paper. 
No runtimes on individual instances are given there, but the authors report that their best method could prove optimality within about  one hour per instance. 
The next part of each table contains upper bounds, with corresponding runtimes (in seconds) in parentheses. 
For the instances from \cite{Campelo2010ILPLagr}, we first report the best upper bound obtained in that work, where a time limit of 1800 seconds was used. 
Runtimes are omitted here because of differences in hardware.
We further present the bound $\theta_k(\mc{G})$ as computed in \cite{Kuryatnikova2022MkCSProblem} using the solver Mosek. We also present the strongest bound from that paper, namely the bound $\theta^1_k(\mc{G})$ that is strengthened using BQP cuts. The authors only report results on instances where the solver did not run out of memory; the other entries in the tables are left blank.
The runtimes reported in that paper are included for reference, but should be interpreted with care as a different machine was used  for that work (a computer with two processors Intel\textsuperscript{\textregistered} Xeon\textsuperscript{\textregistered} Gold 6126 CPU @ 2.60 GHz and 512 GB of RAM).
The following columns present the bounds and runtimes of the standard ADMM without cuts (denoted by STD-ADMM) and the CP-ADMM, respectively.
Recall that the valid upper bounds computed by the STD-ADMM approximate $\theta_k(\mc{G})$.
As will be motivated in the next section, the CP-ADMM is configured here to use generalized clique and odd-hole inequalities, but no generalized triangle inequalities. 
Bounds preceded by a ``$\leq$''-sign indicate that a method was terminated early because the upper bound rounded down equaled a lower bound given by HEUR1 or HEUR2. 
Moreover, the CP-ADMM was not run on instances where the STD-ADMM already provided an optimal bound.
The final part of each table concerns the methods for finding lower bounds.
For the instances of \cite{Campelo2010ILPLagr}, we first report the best lower bound from that work.
Columns HEUR1, HEUR2 and INT-ADMM report the lower bounds and runtimes of the respective methods.
To evaluate the role of the mechanism to escape local optima in the INT-ADMM (see Section~\ref{Sec: Convergence INT-ADMM}), the columns INT-ADMM$^-$ report the objective value of the first feasible solution obtained by the algorithm, together with the corresponding runtime.
The best upper and lower bounds for each instance are depicted in bold.

Furthermore, Table~\ref{Table: Additional} reports for each instance the number of iterations required by each ADMM-based method. For the CP-ADMM, the table additionally specifies the number of outer iterations (in parentheses) and the number of added cuts.

The results can be summarized as follows:
\begin{itemize}
    \item The upper bounds computed by the STD-ADMM approximate well the bounds $\theta_k(\mc{G})$ computed in \cite{Kuryatnikova2022MkCSProblem} using Mosek, but the STD-ADMM typically requires much less runtime, and never ran out of memory.
    \item The CP-ADMM produces the strongest upper bounds on almost all instances, often improving upon the best previously known bound by multiple integers. 
    On the few instances where the CP-ADMM does not return the best bound, its bound after rounding down matches the best previously known integer bound.
    Furthermore, the CP-ADMM never ran out of memory and is often significantly faster than the IPM-based cutting plane algorithm of \cite{Kuryatnikova2022MkCSProblem} for finding the strengthened $\theta^1_k(\mc{G})$ bound, which is, on most instances, the best previously known upper bound. 
    There are also instances where the CP-ADMM does not improve upon the STD-ADMM, either because no violated inequalities are identified, or because the added inequalities do not lead to stronger bounds.
\end{itemize}

\begin{landscape}
\begin{table}[h]
\centering
\caption{Results on instances considered in \cite{Campelo2010ILPLagr}.}
\label{Table: ResultsCC}
\setlength{\tabcolsep}{4pt}
\resizebox{\linewidth}{!}{
\begin{tabular}{@{}lrrrrrrrrrrrrrrrrrrrrrrrrrrrrrrr@{}}
\toprule
\multicolumn{4}{c}{Instance} & \multicolumn{1}{c}{} & \multicolumn{13}{c}{Upper bounds} & \multicolumn{1}{c}{} & \multicolumn{13}{c}{Lower bounds} \\ \cmidrule(r){1-4} \cmidrule(lr){6-18} \cmidrule(l){20-32} 
\multicolumn{1}{c}{graph} & \multicolumn{1}{c}{$n$} & \multicolumn{1}{c}{$\rho$} & \multicolumn{1}{c}{$k$} & \multicolumn{1}{c}{} & \multicolumn{1}{c}{UB\cite{Campelo2010ILPLagr}} & \multicolumn{1}{c}{} & \multicolumn{2}{c}{$\theta_k(\mc{G})$} & \multicolumn{1}{c}{} & \multicolumn{2}{c}{$\theta^1_k(\mc{G})$+cuts} & \multicolumn{1}{c}{} & \multicolumn{2}{c}{STD-ADMM} & \multicolumn{1}{c}{} & \multicolumn{2}{c}{CP-ADMM} & \multicolumn{1}{c}{} & \multicolumn{1}{c}{LB\cite{Campelo2010ILPLagr}} & \multicolumn{1}{c}{} & \multicolumn{2}{c}{HEUR1} & \multicolumn{1}{c}{} & \multicolumn{2}{c}{HEUR2} & \multicolumn{1}{c}{} & \multicolumn{2}{c}{\makebox[2em][c]{INT-ADMM$^-$}} & \multicolumn{1}{c}{} & \multicolumn{2}{c}{INT-ADMM} \\ \midrule
C125.9c & 125 & 0.10 & 2 &  & 79.4 &  & 74.63 & (29) &  & 74.10 & (1130) &  & 74.64 & (1) &  & \textbf{70.46} & (262) &  & 61 &  & 61 & ($<$1) &  & \textbf{64} & (2) &  & 62 & (6) &  & \textbf{64} & (40) \\
 &  &  & 3 &  & 115.6 &  & 107.27 & (25) &  & 105.31 & (1806) &  & 107.28 & (1) &  & \textbf{102.29} & (241) &  & 81 &  & 83 & ($<$1) &  & 87 & (2) &  & 88 & (1) &  & \textbf{90} & (55) \\
keller4c & 171 & 0.35 & 2 &  & 27.9 &  & 26.93 & (62) &  & 26.93 & (1217) &  & 26.94 & (29) &  & \textbf{24.69} & (752) &  & 20 &  & 18 & ($<$1) &  & \textbf{22} & (4) &  & \textbf{22} & (6) &  & \textbf{22} & (70) \\
 &  &  & 3 &  & 41.9 &  & 40.40 & (68) &  & 40.40 & (1190) &  & 40.40 & (29) &  & \textbf{36.93} & (245) &  & 30 &  & 31 & ($<$1) &  & 32 & (4) &  & \textbf{33} & (5) &  & \textbf{33} & (77) \\
gen200\_p0.9\_44c & 200 & 0.10 & 2 &  & 88.0 &  & 88.00 & (258) &  & 87.99 & (13570) &  & 88.02 & (5) &  & \textbf{87.17} & (161) &  & 64 &  & 71 & ($<$1) &  & \textbf{82} & (6) &  & 77 & (3) &  & \textbf{82} & (103) \\
 &  &  & 3 &  & 132.0 &  & 131.94 & (258) &  & 131.66 & (16242) &  & 131.96 & (9) &  & \textbf{129.84} & (154) &  & 93 &  & 105 & ($<$1) &  & 113 & (6) &  & 112 & (3) &  & \textbf{116} & (205) \\
gen200\_p0.9\_55c & 200 & 0.10 & 2 &  & 109.0 &  & 100.84 & (338) &  & 100.36 & (14187) &  & 100.85 & (4) &  & \textbf{98.73} & (139) &  & 69 &  & 83 & ($<$1) &  & \textbf{94} & (6) &  & 90 & (3) &  & \textbf{94} & (99) \\
 &  &  & 3 &  & 161.6 &  & 146.22 & (312) &  & 145.34 & (10893) &  & 146.24 & (4) &  & \textbf{142.13} & (257) &  & 99 &  & 112 & ($<$1) &  & 127 & (6) &  & 127 & (4) &  & \textbf{129} & (179) \\
san200\_0.9\_2c & 200 & 0.10 & 2 &  & 117.0 &  & 106.61 & (353) &  & 106.03 & (13590) &  & 106.63 & (3) &  & \textbf{104.40} & (37) &  & 75 &  & 97 & ($<$1) &  & 99 & (6) &  & 99 & (4) &  & \textbf{101} & (65) \\
 &  &  & 3 &  & 184.0 &  & 152.84 & (328) &  & 151.16 & (11889) &  & 152.88 & (3) &  & \textbf{147.79} & (366) &  & 98 &  & 127 & ($<$1) &  & 131 & (4) &  & 137 & (3) &  & \textbf{138} & (87) \\
san200\_0.7\_2c & 200 & 0.30 & 2 &  & 36.0 &  & 35.63 & (220) &  & 35.60 & (12066) &  & 35.63 & (5) &  & \textbf{34.93} & (63) &  & 31 &  & 28 & ($<$1) &  & 30 & (5) &  & \textbf{34} & (3) &  & \textbf{34} & (3) \\
 &  &  & 3 &  & 54.0 &  & 53.24 & (190) &  & 53.22 & (5508) &  & 53.24 & (4) &  & \textbf{51.58} & (95) &  & 43 &  & 41 & ($<$1) &  & 42 & (5) &  & \textbf{49} & (3) &  & \textbf{49} & (69) \\
brock200\_4c & 200 & 0.34 & 2 &  & 51.8 &  & 42.24 & (115) &  & 42.24 & (1697) &  & 42.25 & (1) &  & \textbf{40.97} & (40) &  & 27 &  & 30 & ($<$1) &  & 30 & (5) &  & 29 & (13) &  & \textbf{31} & (137) \\
 &  &  & 3 &  & 76.6 &  & 63.36 & (117) &  & 63.36 & (1636) &  & 63.37 & (1) &  & \textbf{61.40} & (44) &  & 40 &  & 42 & ($<$1) &  & 45 & (5) &  & \textbf{46} & (16) &  & \textbf{46} & (106) \\
brock200\_2c & 200 & 0.50 & 2 &  & 30.1 &  & 28.26 & (60) &  & 28.26 & (1291) &  & 28.26 & (2) &  & \textbf{27.64} & (47) &  & 19 &  & 19 & ($<$1) &  & 20 & (4) &  & \textbf{21} & (22) &  & \textbf{21} & (114) \\
 &  &  & 3 &  & 44.9 &  & 42.39 & (61) &  & 42.39 & (1168) &  & 42.39 & (1) &  & \textbf{41.29} & (52) &  & 28 &  & 28 & ($<$1) &  & 29 & (4) &  & 30 & (27) &  & \textbf{31} & (142) \\
c-fat200-5c & 200 & 0.57 & 2 &  & \textbf{116.0} &  & 120.69 & (35) &  & 119.42 & (7649) &  & 120.73 & (24) &  & \textbf{$\leq$116.77} & (81) &  & \textbf{116} &  & \textbf{116} & ($<$1) &  & 115 & (11) &  & 114 & (32) &  & \textbf{116} & (107) \\
 &  &  & 3 &  & \textbf{172.0} &  & 181.03 & (33) &  & 175.24 & (6955) &  & 181.09 & (11) &  & \textbf{$\leq$172.88} & (49) &  & \textbf{172} &  & \textbf{172} & ($<$1) &  & \textbf{172} & (11) &  & 169 & (4) &  & \textbf{172} & (5) \\
C250.9c & 250 & 0.10 & 2 &  & 134.5 &  & 111.63 & (1952) &  &  &  &  & 111.64 & (5) &  & \textbf{109.04} & (94) &  & 74 &  & 81 & ($<$1) &  & 85 & (10) &  & 80 & (4) &  & \textbf{86} & (248) \\
 &  &  & 3 &  & 201.6 &  & 167.15 & (1861) &  &  &  &  & 167.22 & (3) &  & \textbf{163.58} & (42) &  & 106 &  & 116 & ($<$1) &  & 122 & (10) &  & 114 & (6) &  & \textbf{125} & (223) \\
p\_hat300-2c & 300 & 0.51 & 2 &  & 62.1 &  & 52.96 & (626) &  &  &  &  & 52.97 & (77) &  & \textbf{52.79} & (179) &  & 42 &  & 43 & ($<$1) &  & 45 & (11) &  & 45 & (6) &  & \textbf{46} & (207) \\
 &  &  & 3 &  & 91.0 &  & 77.28 & (718) &  &  &  &  & 77.29 & (48) &  & \textbf{77.09} & (186) &  & 55 &  & 58 & ($<$1) &  & 61 & (12) &  & \textbf{62} & (6) &  & \textbf{62} & (210) \\
p\_hat300-1c & 300 & 0.76 & 2 &  & 20.9 &  & \textbf{20.04} & (107) &  &  &  &  & \textbf{20.04} & (78) &  & \textbf{20.04} & (103) &  & 14 &  & 14 & ($<$1) &  & 14 & (11) &  & \textbf{16} & (32) &  & \textbf{16} & (196) \\
 &  &  & 3 &  & 31.0 &  & \textbf{30.06} & (109) &  &  &  &  & \textbf{30.06} & (78) &  & \textbf{30.06} & (103) &  & 20 &  & 21 & ($<$1) &  & 21 & (12) &  & 23 & (52) &  & \textbf{24} & (321) \\
MANN-a27c & 378 & 0.01 & 2 &  & \textbf{261.0} &  &  &  &  &  &  &  & 261.31 & (85) &  & 261.06 & (51) &  & 246 &  & 249 & ($<$1) &  & \textbf{250} & (22) &  & 191 & (38) &  & 240 & (451) \\
 &  &  & 3 &  & \textbf{378.0} &  &  &  &  &  &  &  & 378.12 & (4) &  & 378.12 & (5) &  & 368 &  & 370 & ($<$1) &  & \textbf{372} & (16) &  & 302 & (7) &  & 367 & (486) \\
san400-0-9-1c & 400 & 0.10 & 2 &  & 199.0 &  &  &  &  &  &  &  & 173.11 & (11) &  & \textbf{170.83} & (100) &  & 132 &  & 147 & ($<$1) &  & 108 & (26) &  & 151 & (12) &  & \textbf{155} & (442) \\
 &  &  & 3 &  & 289.9 &  &  &  &  &  &  &  & 245.77 & (15) &  & \textbf{240.69} & (352) &  & 163 &  & 202 & ($<$1) &  & 159 & (23) &  & 201 & (7) &  & \textbf{205} & (618) \\
c-fat500-10c & 500 & 0.63 & 2 &  & \textbf{252.0} &  &  &  &  &  &  &  & 252.19 & (143) &  & \textbf{} &  &  & \textbf{252} &  & \textbf{252} & ($<$1) &  & 251 & (85) &  & 250 & (292) &  & 250 & (839) \\
 &  &  & 3 &  & \textbf{376.0} &  &  &  &  &  &  &  & 376.42 & (143) &  & \textbf{} &  &  & \textbf{376} &  & \textbf{376} & ($<$1) &  & \textbf{376} & (69) &  & \textbf{376} & (22) &  & \textbf{376} & (22) \\ \bottomrule
\end{tabular}}
\end{table}

\begin{table}[h]
\centering
\caption{Results on instances considered in \cite{Januschowski2011ILPSym}.}
\label{Table: ResultsJP}
\setlength{\tabcolsep}{4pt}
\resizebox{0.93\linewidth}{!}{
\begin{tabular}{@{}lrrrrrrrrrrrrrrrrrrrrrrrrrrrr@{}}
\toprule
\multicolumn{5}{c}{Instance} & \multicolumn{1}{c}{} & \multicolumn{11}{c}{Upper   bounds} & \multicolumn{1}{c}{} & \multicolumn{11}{c}{Lower   bounds} \\ \cmidrule(r){1-5} \cmidrule(lr){7-17} \cmidrule(l){19-29} 
\multicolumn{1}{c}{graph} & \multicolumn{1}{c}{$n$} & \multicolumn{1}{c}{$\rho$} & \multicolumn{1}{c}{$k$} & \multicolumn{1}{c}{$\alpha_k(\mc{G})$} & \multicolumn{1}{c}{} & \multicolumn{2}{c}{$\theta_k(\mc{G})$} & \multicolumn{1}{c}{} & \multicolumn{2}{c}{$\theta^1_k(\mc{G})$+cuts} & \multicolumn{1}{c}{} & \multicolumn{2}{c}{STD-ADMM} & \multicolumn{1}{c}{} & \multicolumn{2}{c}{CP-ADMM} & \multicolumn{1}{c}{} & \multicolumn{2}{c}{HEUR1} & \multicolumn{1}{c}{} & \multicolumn{2}{c}{HEUR2} & \multicolumn{1}{c}{} & \multicolumn{2}{c}{\makebox[2em][c]{INT-ADMM$^-$}} & \multicolumn{1}{c}{} & \multicolumn{2}{c}{INT-ADMM} \\ \midrule
queen6\_6 & 36 & 0.46 & 6 & 32 &  & 35.84 & ($<$1) &  & 35.81 & (3) &  & 35.84 & ($<$1) &  & \textbf{33.60} & (38) &  & 30 & ($<$1) &  & \textbf{32} & (1) &  & \textbf{32} & (4) &  & \textbf{32} & (9) \\
myciel5 & 47 & 0.22 & 4 & 44 &  & \textbf{47.00} & ($<$1) &  & \textbf{47.00} & (4) &  & \textbf{47.00} & ($<$1) &  & \textbf{47.00} & (1) &  & \textbf{44} & ($<$1) &  & \textbf{44} & (1) &  & \textbf{44} & (4) &  & \textbf{44} & (13) \\
 &  &  & 5 & 46 &  & \textbf{47.00} & ($<$1) &  & \textbf{47.00} & (7) &  & \textbf{47.00} & ($<$1) &  & \textbf{47.00} & ($<$1) &  & \textbf{46} & ($<$1) &  & \textbf{46} & (5) &  & 44 & (6) &  & \textbf{46} & (17) \\
1-Insertions\_4 & 67 & 0.10 & 3 & 63 &  & \textbf{67.00} & (1) &  & \textbf{67.00} & (34) &  & \textbf{67.00} & ($<$1) &  & \textbf{67.00} & (1) &  & \textbf{63} & ($<$1) &  & \textbf{63} & (2) &  & \textbf{63} & (5) &  & \textbf{63} & (19) \\
1-FullIns\_4 & 93 & 0.14 & 3 & 87 &  & 92.59 & (11) &  & 91.33 & (554) &  & 92.60 & (2) &  & \textbf{89.83} & (178) &  & \textbf{87} & ($<$1) &  & \textbf{87} & (2) &  & \textbf{87} & (1) &  & \textbf{87} & (21) \\
myciel6 & 95 & 0.17 & 3 & 83 &  & 95.00 & (4) &  & 93.32 & (366) &  & 95.01 & ($<$1) &  & \textbf{89.59} & (336) &  & \textbf{83} & ($<$1) &  & \textbf{83} & (2) &  & 80 & (6) &  & 80 & (28) \\
4-FullIns\_3 & 114 & 0.08 & 3 & 106 &  & 107.40 & (20) &  & \textbf{107.25} & (901) &  & 107.41 & ($<$1) &  & 107.26 & (45) &  & 105 & ($<$1) &  & \textbf{106} & (3) &  & \textbf{106} & (3) &  & \textbf{106} & (29) \\
5-FullIns\_3 & 154 & 0.07 & 3 & 144 &  & 145.33 & (89) &  & 145.23 & (3551) &  & 145.35 & (1) &  & \textbf{145.22} & (49) &  & \textbf{144} & ($<$1) &  & \textbf{144} & (5) &  & \textbf{144} & (1) &  & \textbf{144} & (46) \\
c-fat200-1 & 200 & 0.08 & 10 & 180 &  & 184.67 & (300) &  & \textbf{184.65} & (5485) &  & 184.68 & (9) &  & 184.68 & (10) &  & \textbf{180} & ($<$1) &  & \textbf{180} & (6) &  & \textbf{180} & (16) &  & \textbf{180} & (98) \\
sanr200\_0.9 & 200 & 0.90 & 4 & 16 &  & \textbf{17.91} & (2) &  & \textbf{17.91} & (918) &  & \textbf{17.91} & (41) &  & \textbf{17.91} & (60) &  & \textbf{16} & ($<$1) &  & 15 & (5) &  & \textbf{16} & (26) &  & \textbf{16} & (108) \\
san200\_0.9\_2 & 200 & 0.90 & 4 & 16 &  & 17.21 & (3) &  & \textbf{17.20} & (2036) &  & 17.21 & (41) &  & 17.21 & (61) &  & \textbf{16} & ($<$1) &  & \textbf{16} & (5) &  & \textbf{16} & (19) &  & \textbf{16} & (106) \\
gen200\_p0.9\_55 & 200 & 0.90 & 4 & 17 &  & \textbf{18.15} & (4) &  & \textbf{18.15} & (1386) &  & \textbf{18.15} & (27) &  & \textbf{18.15} & (46) &  & 16 & ($<$1) &  & 15 & (5) &  & \textbf{17} & (18) &  & \textbf{17} & (99) \\
2-FullIns\_4 & 212 & 0.07 & 3 & 202 &  & 207.65 & (993) &  &  &  &  & 207.69 & (8) &  & \textbf{206.50} & (936) &  & 201 & ($<$1) &  & \textbf{202} & (6) &  & 189 & (2) &  & \textbf{202} & (106) \\
DSJC250.9 & 250 & 0.90 & 4 & 18 &  & \textbf{19.72} & (15) &  &  &  &  & 19.73 & (59) &  & 19.73 & (84) &  & 17 & ($<$1) &  & 16 & (8) &  & \textbf{18} & (34) &  & \textbf{18} & (157) \\
3-FullIns\_4 & 405 & 0.04 & 5 & 402 &  &  &  &  &  &  &  & 405.03 & (24) &  & \textbf{404.29} & (103) &  & \textbf{402} & ($<$1) &  & \textbf{402} & (21) &  & 377 & (8) &  & \textbf{402} & (248) \\
 &  &  & 6 & 404 &  &  &  &  &  &  &  & \textbf{405.05} & (6) &  & 405.06 & (90) &  & \textbf{404} & (1) &  & \textbf{404} & (22) &  & 386 & (5) &  & \textbf{404} & (280) \\
DSJR500.1 & 500 & 0.03 & 8 & 459 &  &  &  &  &  &  &  & 459.81 & (4) &  & \textbf{459.55} & (21) &  & 445 & (2) &  & 438 & (26) &  & 403 & (16) &  & \textbf{458} & (667) \\
 &  &  & 9 & 477 &  &  &  &  &  &  &  & 477.82 & (4) &  & \textbf{477.54} & (19) &  & 466 & (2) &  & 459 & (25) &  & 441 & (11) &  & \textbf{477} & (95) \\
 &  &  & 10 & 489 &  &  &  &  &  &  &  & 489.84 & (6) &  & \textbf{489.55} & (18) &  & 480 & (3) &  & 477 & (27) &  & 474 & (14) &  & \textbf{489} & (165) \\
 &  &  & 11 & 496 &  &  &  &  &  &  &  & 496.81 & (4) &  & \textbf{496.55} & (15) &  & 489 & (3) &  & 493 & (35) &  & 492 & (12) &  & \textbf{496} & (32) \\
c-fat500-2 & 500 & 0.07 & 15 & 300 &  &  &  &  &  &  &  & \textbf{300.06} & (3) &  & \textbf{} &  &  & \textbf{300} & (3) &  & \textbf{300} & (55) &  & \textbf{300} & (32) &  & \textbf{300} & (32) \\
 &  &  & 17 & 340 &  &  &  &  &  &  &  & \textbf{340.10} & (3) &  & \textbf{} &  &  & \textbf{340} & (5) &  & \textbf{340} & (45) &  & \textbf{340} & (46) &  & \textbf{340} & (46) \\
 &  &  & 20 & 400 &  &  &  &  &  &  &  & \textbf{400.16} & (3) &  & \textbf{} &  &  & \textbf{400} & (7) &  & \textbf{400} & (56) &  & \textbf{400} & (33) &  & \textbf{400} & (33) \\
 &  &  & 22 & 440 &  &  &  &  &  &  &  & \textbf{440.19} & (4) &  & \textbf{} &  &  & \textbf{440} & (10) &  & \textbf{440} & (51) &  & 433 & (76) &  & \textbf{440} & (98) \\
 &  &  & 25 & 490 &  &  &  &  &  &  &  & \textbf{$\leq$490.15} & (35) &  & \textbf{} &  &  & \textbf{490} & (13) &  & \textbf{490} & (47) &  & \textbf{490} & (85) &  & \textbf{490} & (85) \\
c-fat500-10 & 500 & 0.37 & 10 & 40 &  &  &  &  &  &  &  & \textbf{$\leq$40.09} & (36) &  & \textbf{} &  &  & \textbf{40} & ($<$1) &  & \textbf{40} & (48) &  & \textbf{40} & (163) &  & \textbf{40} & (163) \\
 &  &  & 15 & 60 &  &  &  &  &  &  &  & \textbf{$\leq$60.05} & (36) &  & \textbf{} &  &  & \textbf{60} & (1) &  & \textbf{60} & (50) &  & \textbf{60} & (159) &  & \textbf{60} & (159) \\
 &  &  & 17 & 68 &  &  &  &  &  &  &  & \textbf{$\leq$68.06} & (36) &  & \textbf{} &  &  & \textbf{68} & (2) &  & \textbf{68} & (50) &  & \textbf{68} & (151) &  & \textbf{68} & (151) \\
 &  &  & 20 & 80 &  &  &  &  &  &  &  & \textbf{80.04} & (34) &  & \textbf{} &  &  & \textbf{80} & (3) &  & \textbf{80} & (58) &  & \textbf{80} & (136) &  & \textbf{80} & (136) \\
 &  &  & 22 & 88 &  &  &  &  &  &  &  & \textbf{88.05} & (32) &  & \textbf{} &  &  & \textbf{88} & (3) &  & \textbf{88} & (60) &  & \textbf{88} & (151) &  & \textbf{88} & (151) \\
 &  &  & 25 & 100 &  &  &  &  &  &  &  & \textbf{100.05} & (28) &  & \textbf{} &  &  & \textbf{100} & (5) &  & \textbf{100} & (56) &  & \textbf{100} & (133) &  & \textbf{100} & (133) \\
4-FullIns\_4 & 690 & 0.03 & 5 & 684 &  &  &  &  &  &  &  & 686.59 & (326) &  & \textbf{686.38} & (3167) &  & \textbf{684} & (2) &  & \textbf{684} & (50) &  & 643 & (84) &  & \textbf{684} & (970) \\
 &  &  & 6 & 687 &  &  &  &  &  &  &  & 690.24 & (329) &  & \textbf{689.63} & (2300) &  & \textbf{687} & (3) &  & \textbf{687} & (42) &  & 625 & (31) &  & \textbf{687} & (669) \\
 &  &  & 7 & 689 &  &  &  &  &  &  &  & \textbf{690.23} & (347) &  & \textbf{690.23} & (515) &  & \textbf{689} & (3) &  & \textbf{689} & (38) &  & 659 & (50) &  & \textbf{689} & (1056) \\
5-FullIns\_4 & 1085 & 0.02 & 5 & 1076 &  &  &  &  &  &  &  & 1078.80 & (853) &  & \textbf{1078.52} & (3600) &  & \textbf{1076} & (6) &  & \textbf{1076} & (113) &  & 823 & (190) &  & \textbf{1076} & (3011) \\
 &  &  & 6 & 1079 &  &  &  &  &  &  &  & \textbf{1082.51} & (891) &  & 1082.64 & (3278) &  & \textbf{1079} & (10) &  & \textbf{1079} & (96) &  & 946 & (165) &  & \textbf{1079} & (3312) \\
 &  &  & 7 & 1082 &  &  &  &  &  &  &  & 1085.96 & (881) &  & \textbf{1085.52} & (3491) &  & \textbf{1082} & (12) &  & \textbf{1082} & (90) &  & 814 & (109) &  & \textbf{1082} & (2869) \\
 &  &  & 8 & 1084 &  &  &  &  &  &  &  & \textbf{1085.68} & (917) &  & \textbf{1085.68} & (937) &  & \textbf{1084} & (17) &  & \textbf{1084} & (97) &  & 971 & (131) &  & 1082 & (2865) \\ \bottomrule
\end{tabular}}
\end{table}
\end{landscape}

\begin{landscape}
\begin{table}[h]
\centering
\caption{Additional results on the number of iterations and cuts.}
\label{Table: Additional}
\setlength{\tabcolsep}{4pt}
\resizebox{0.95\linewidth}{!}{%
\begin{tabular}{@{}lrrrrrrrrrrrr@{\hspace{1cm}}lrrrrrrrrrrr@{}}
\toprule
\multicolumn{12}{c}{Instances considered in \cite{Campelo2010ILPLagr}} & \multicolumn{1}{c}{} & \multicolumn{12}{c}{Instances considered in \cite{Januschowski2011ILPSym}} \\ \cmidrule(r){1-12} \cmidrule(l){14-25} 
\multicolumn{2}{c}{Instance} & \multicolumn{1}{c}{} & \multicolumn{1}{c}{STD-ADMM} & \multicolumn{1}{c}{} & \multicolumn{3}{c}{CP-ADMM} & \multicolumn{1}{c}{} & \multicolumn{1}{c}{INT-ADMM$^-$} & \multicolumn{1}{c}{} & \multicolumn{1}{c}{INT-ADMM} & \multicolumn{1}{c}{} & \multicolumn{2}{c}{Instance} & \multicolumn{1}{c}{} & \multicolumn{1}{c}{STD-ADMM} & \multicolumn{1}{c}{} & \multicolumn{3}{c}{CP-ADMM} & \multicolumn{1}{c}{} & \multicolumn{1}{c}{INT-ADMM$^-$} & \multicolumn{1}{c}{} & \multicolumn{1}{c}{INT-ADMM} \\ \cmidrule(r){1-2} \cmidrule(lr){4-4} \cmidrule(lr){6-8} \cmidrule(lr){10-10} \cmidrule(lr){12-12} \cmidrule(lr){14-15} \cmidrule(lr){17-17} \cmidrule(lr){19-21} \cmidrule(lr){23-23} \cmidrule(l){25-25} 
\multicolumn{1}{c}{graph} & \multicolumn{1}{c}{$k$} & \multicolumn{1}{c}{} & \multicolumn{1}{c}{\#iterations} & \multicolumn{1}{c}{} & \multicolumn{2}{c}{\#iterations} & \multicolumn{1}{c}{\#cuts} & \multicolumn{1}{c}{} & \multicolumn{1}{c}{\#iterations} & \multicolumn{1}{c}{} & \multicolumn{1}{c}{\#iterations} & \multicolumn{1}{c}{} & \multicolumn{1}{c}{graph} & \multicolumn{1}{c}{$k$} & \multicolumn{1}{c}{} & \multicolumn{1}{c}{\#iterations} & \multicolumn{1}{c}{} & \multicolumn{2}{c}{\#iterations} & \multicolumn{1}{c}{\#cuts} & \multicolumn{1}{c}{} & \multicolumn{1}{c}{\#iterations} & \multicolumn{1}{c}{} & \multicolumn{1}{c}{\#iterations} \\ \midrule
C125.9c & 2 &  & 501 &  & 3008 & (13) & 3395 &  & 3266 &  & 22973 &  & queen6\_6 & 6 &  & 680 &  & 5569 & (12) & 627 &  & 13460 &  & 33684 \\
 & 3 &  & 548 &  & 2845 & (15) & 3411 &  & 750 &  & 31325 &  & myciel5 & 4 &  & 34 &  & 34 & (2) & 51 &  & 12441 &  & 38842 \\
keller4c & 2 &  & 10000 &  & 12625 & (9) & 3608 &  & 1792 &  & 22029 &  &  & 5 &  & 28 &  & 28 & (1) & 0 &  & 17687 &  & 50207 \\
 & 3 &  & 10000 &  & 3583 & (11) & 4487 &  & 1522 &  & 24570 &  & 1-Insertions\_4 & 3 &  & 54 &  & 75 & (3) & 410 &  & 5724 &  & 25286 \\
gen200\_p0.9\_44c & 2 &  & 1284 &  & 4779 & (9) & 4092 &  & 614 &  & 25324 &  & 1-FullIns\_4 & 3 &  & 1916 &  & 5994 & (11) & 3928 &  & 516 &  & 19638 \\
 & 3 &  & 2581 &  & 2932 & (10) & 5017 &  & 624 &  & 50164 &  & myciel6 & 3 &  & 99 &  & 6689 & (13) & 5608 &  & 5455 &  & 26107 \\
gen200\_p0.9\_55c & 2 &  & 1049 &  & 2258 & (10) & 3945 &  & 644 &  & 24090 &  & 4-FullIns\_3 & 3 &  & 304 &  & 2946 & (6) & 898 &  & 2058 &  & 19497 \\
 & 3 &  & 1144 &  & 2552 & (12) & 4785 &  & 1000 &  & 43577 &  & 5-FullIns\_3 & 3 &  & 487 &  & 2169 & (6) & 1058 &  & 347 &  & 17594 \\
san200\_0.9\_2c & 2 &  & 750 &  & 1492 & (6) & 3217 &  & 1007 &  & 15809 &  & c-fat200-1 & 10 &  & 2439 &  & 2439 & (1) & 0 &  & 4060 &  & 24600 \\
 & 3 &  & 925 &  & 2800 & (15) & 5335 &  & 660 &  & 21110 &  & sanr200\_0.9 & 4 &  & 10000 &  & 10000 & (1) & 0 &  & 6342 &  & 26154 \\
san200\_0.7\_2c & 2 &  & 1235 &  & 2208 & (6) & 2185 &  & 602 &  & 602 &  & san200\_0.9\_2 & 4 &  & 10000 &  & 10000 & (1) & 0 &  & 4586 &  & 25966 \\
 & 3 &  & 995 &  & 1957 & (9) & 4185 &  & 667 &  & 16397 &  & gen200\_p0.9\_55 & 4 &  & 6659 &  & 6659 & (1) & 0 &  & 4317 &  & 24179 \\
brock200\_4c & 2 &  & 137 &  & 622 & (6) & 2668 &  & 2940 &  & 32005 &  & 2-FullIns\_4 & 3 &  & 1967 &  & 10987 & (7) & 5288 &  & 381 &  & 24844 \\
 & 3 &  & 118 &  & 521 & (7) & 3274 &  & 3722 &  & 24646 &  & DSJC250.9 & 4 &  & 10000 &  & 10000 & (1) & 0 &  & 5908 &  & 27719 \\
brock200\_2c & 2 &  & 469 &  & 1090 & (5) & 998 &  & 4989 &  & 26193 &  & 3-FullIns\_4 & 5 &  & 2002 &  & 1400 & (4) & 1551 &  & 638 &  & 20615 \\
 & 3 &  & 304 &  & 883 & (6) & 1436 &  & 6205 &  & 32366 &  &  & 6 &  & 433 &  & 3903 & (2) & 178 &  & 371 &  & 23060 \\
c-fat200-5c & 2 &  & 7387 &  & 7970 & (5) & 3255 &  & 8075 &  & 27039 &  & DSJR500.1 & 8 &  & 196 &  & 248 & (3) & 817 &  & 940 &  & 39031 \\
 & 3 &  & 3566 &  & 3298 & (6) & 4443 &  & 1122 &  & 1273 &  &  & 9 &  & 227 &  & 232 & (3) & 476 &  & 639 &  & 5464 \\
C250.9c & 2 &  & 1066 &  & 2142 & (8) & 3420 &  & 691 &  & 42300 &  &  & 10 &  & 301 &  & 266 & (2) & 258 &  & 784 &  & 9332 \\
 & 3 &  & 644 &  & 1309 & (5) & 3217 &  & 1069 &  & 38292 &  &  & 11 &  & 209 &  & 252 & (2) & 166 &  & 667 &  & 1811 \\
p\_hat300-2c & 2 &  & 10000 &  & 10634 & (2) & 201 &  & 718 &  & 26862 &  & c-fat500-2 & 15 &  & 153 &  &  &  &  &  & 1835 &  & 1835 \\
 & 3 &  & 6219 &  & 10478 & (2) & 175 &  & 797 &  & 26903 &  &  & 17 &  & 157 &  &  &  &  &  & 2508 &  & 2508 \\
p\_hat300-1c & 2 &  & 10000 &  & 10000 & (1) & 0 &  & 4045 &  & 25305 &  &  & 20 &  & 177 &  &  &  &  &  & 1846 &  & 1846 \\
 & 3 &  & 10000 &  & 10000 & (1) & 0 &  & 6725 &  & 41314 &  &  & 22 &  & 207 &  &  &  &  &  & 4333 &  & 5608 \\
MANN-a27c & 2 &  & 8394 &  & 2875 & (3) & 559 &  & 3479 &  & 41660 &  &  & 25 &  & 2000 &  &  &  &  &  & 5033 &  & 5033 \\
 & 3 &  & 395 &  & 395 & (1) & 0 &  & 675 &  & 43893 &  & c-fat500-10 & 10 &  & 2000 &  &  &  &  &  & 9396 &  & 9396 \\
san400-0-9-1c & 2 &  & 984 &  & 1551 & (5) & 4511 &  & 991 &  & 35926 &  &  & 15 &  & 2000 &  &  &  &  &  & 9057 &  & 9057 \\
 & 3 &  & 1323 &  & 2607 & (12) & 7296 &  & 597 &  & 49768 &  &  & 17 &  & 2000 &  &  &  &  &  & 8439 &  & 8439 \\
c-fat500-10c & 2 &  & 10000 &  &  &  &  &  & 17398 &  & 48553 &  &  & 20 &  & 1927 &  &  &  &  &  & 7912 &  & 7912 \\
 & 3 &  & 10000 &  &  &  &  &  & 1399 &  & 1399 &  &  & 22 &  & 1758 &  &  &  &  &  & 8754 &  & 8754 \\
 &  &  &  &  &  &  &  &  &  &  &  &  &  & 25 &  & 1542 &  &  &  &  &  & 7773 &  & 7773 \\
 &  &  &  &  &  &  &  &  &  &  &  &  & 4-FullIns\_4 & 5 &  & 10000 &  & 10638 & (5) & 3899 &  & 2801 &  & 32419 \\
 &  &  &  &  &  &  &  &  &  &  &  &  &  & 6 &  & 10000 &  & 10566 & (4) & 3823 &  & 1025 &  & 21502 \\
 &  &  &  &  &  &  &  &  &  &  &  &  &  & 7 &  & 10000 &  & 10079 & (2) & 207 &  & 1627 &  & 33925 \\
 &  &  &  &  &  &  &  &  &  &  &  &  & 5-FullIns\_4 & 5 &  & 10000 &  & 9933 & (3) & 1542 &  & 2449 &  & 39010 \\
 &  &  &  &  &  &  &  &  &  &  &  &  &  & 6 &  & 10000 &  & 10306 & (3) & 981 &  & 2076 &  & 42540 \\
 &  &  &  &  &  &  &  &  &  &  &  &  &  & 7 &  & 10000 &  & 10349 & (3) & 1371 &  & 1395 &  & 37081 \\
 &  &  &  &  &  &  &  &  &  &  &  &  &  & 8 &  & 10000 &  & 10000 & (1) & 0 &  & 1676 &  & 36490 \\ \bottomrule
\end{tabular}}
\end{table}
\end{landscape}

\begin{itemize}
    \item The INT-ADMM always returned a high-quality feasible solution. For the instances considered in \cite{Campelo2010ILPLagr}, the INT-ADMM improves upon the best previously known lower bound on 20 instances, matches the best known bound on 7 instances (with the solution being optimal in at least 3 cases), and is worse on only 3 instances.
    For the instances considered in \cite{Januschowski2011ILPSym}, the INT-ADMM finds an optimal solution for all instances except myciel6 with $k = 3$ and 5-FullIns\_4 with $k=8$ (where HEUR1 and HEUR2 both find an optimal solution), and  DSJR500.1 with $k = 8$ (where the INT-ADMM improves upon HEUR1 and HEUR2).
    \item The mechanism to escape local optima in the INT-ADMM helps to find significantly improved solutions, but the mechanism is computationally costly as it requires a large number of iterations.
\end{itemize}

\subsection{Comparing the effectiveness of valid inequalities} \label{Sec: Effect of valid inequalities}
In this section, we compare several configurations of the CP-ADMM that include inequalities \eqref{eq T1}, \eqref{eq T2}, \eqref{eq cliqueExternal1}, \eqref{eq cliqueUnion} and/or \eqref{eq HoleExternal1}.
For the results presented in this subsection only, each configuration was run with a time limit of 900 seconds.
We restrict attention to 36 out of the 68 benchmark instances for which (i) at least one configuration yields a stronger bound than $\theta_k(\mc{G})$, (ii) none of the configurations attains optimality, and (iii) the majority of the configurations terminate within the time limit.

Figures~\ref{Fig: boxplotsBounds} and~\ref{Fig: boxplotsRuntimes} summarize the results using box and whisker plots. 
Each box extends from the first quartile to the third quartile of the data, with a solid line marking the median. Whiskers extend to the farthest data point lying within 1.5 times the inter-quartile range from the box. 
Outliers are shown by individual markers, and means are indicated by dashed lines.

Figure~\ref{Fig: boxplotsBounds} shows the effect of our valid inequalities on the obtained upper bounds. 
The plot on the left compares upper bounds obtained by configurations in which inequalities of a single type are added to \eqref{SDP}. 
The right plot focuses on the marginal benefit of various configurations that include generalized clique inequalities~\eqref{eq cliqueExternal1} on top of the configuration that only includes those inequalities.
Figure~\ref{Fig: boxplotsRuntimes} concerns the runtimes of each of the configurations. 
\begin{figure}[H]
    \centering
    \caption{Effect of valid inequalities on the obtained upper bounds.}
    \label{Fig: boxplotsBounds}
    \includegraphics[width=1\linewidth]{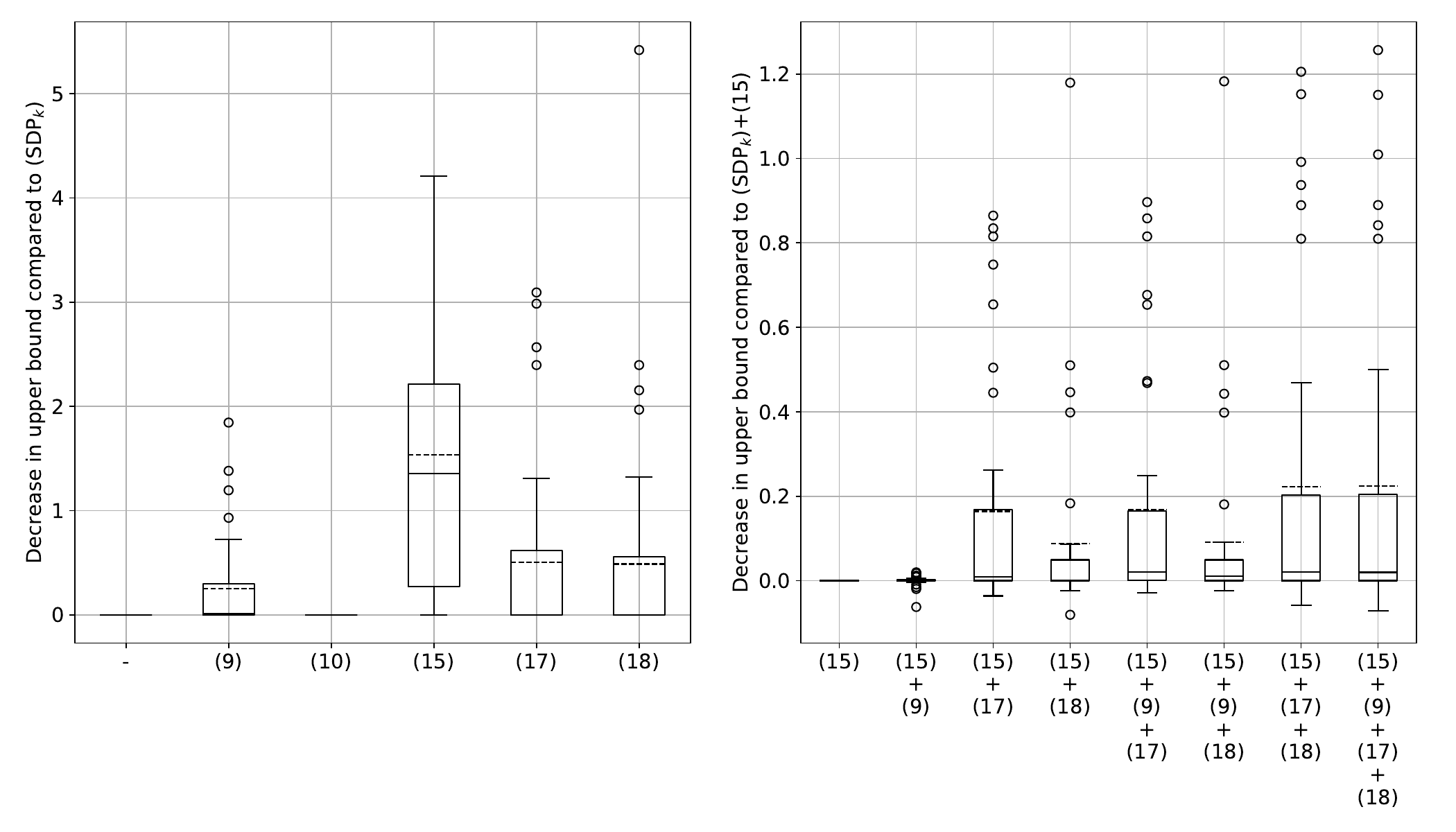}
\end{figure}
\vspace{-1cm}
\begin{figure}[H]
    \centering
    \caption{Required runtime per configuration.}
    \label{Fig: boxplotsRuntimes}
    \includegraphics[width=0.8\linewidth]{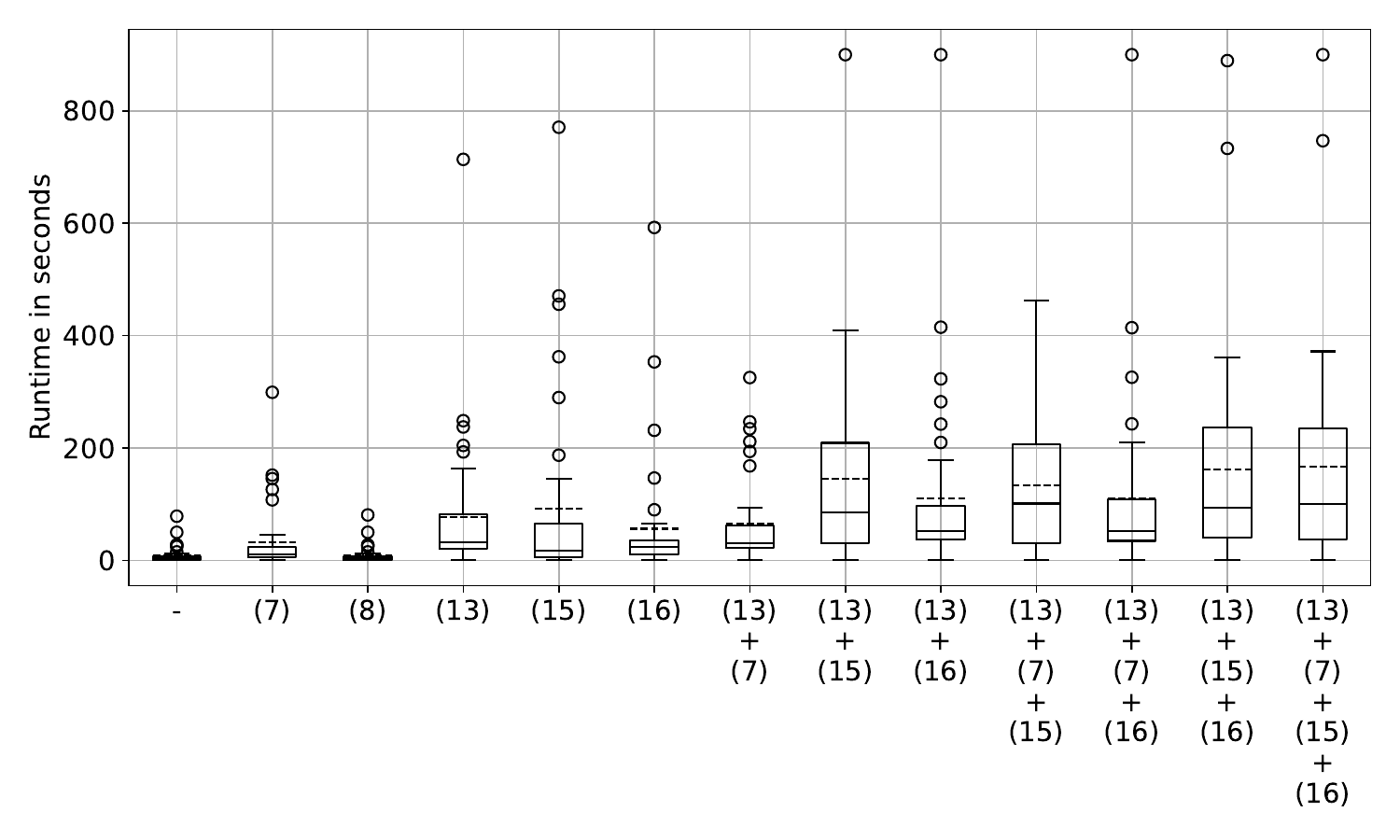}
\end{figure}
\vspace{-0.5cm}

When adding inequalities of a single type to \eqref{SDP}, the generalized clique inequalities~\eqref{eq cliqueExternal1} typically yield the best bounds by a considerable margin. 
On the other hand, generalized triangle inequalities~\eqref{eq T2} do not provide any improvement in bounds when tested on 11 instances for which $k \leq 2$. 
Recall that those inequalities are redundant for  $k>2$.
Conversely, triangle inequalities~\eqref{eq T1} do yield improved bounds.
Better bounds are typically obtained by adding generalized clique inequalities~\eqref{eq cliqueUnion} or generalized odd-hole inequalities~\eqref{eq HoleExternal1}, with their relative effectiveness varying across the instances.
In particular, inequalities \eqref{eq HoleExternal1} are highly effective for the instance with graph myciel6, which does not contain any cliques of size $3$ or larger (see the outlier on the top-right of the first plot). 
This instance is excluded from the second plot, as it would otherwise dominate the scale of the y-axis.
In terms of computational cost, the CP-ADMM typically requires most runtime to solve the relaxations with generalized clique inequalities~\eqref{eq cliqueExternal1} and~\eqref{eq cliqueUnion}, followed by the relaxation with generalized odd-hole inequalities~\eqref{eq HoleExternal1}.

Considering the marginal benefit of various configurations that include~\eqref{eq cliqueExternal1} on top of the configuration that only includes~\eqref{eq cliqueExternal1}, we observe the following.
The largest marginal benefits are obtained using generalized clique inequalities~\eqref{eq cliqueUnion}, but these bound improvements come at a significant computational cost. 
The generalized odd-hole inequalities~\eqref{eq HoleExternal1} also provide significant marginal improvements for many instances, but at a lower computational cost.
Despite being effective on their own, the generalized triangle inequalities~\eqref{eq T1} only lead to limited improvements on top of the other configurations that include inequalities~\eqref{eq cliqueExternal1}. 
This observation is in line with the computational results of \cite{Pucher2025PracticalExperience}, where the M$k$CS problem is considered for $k=1$.
Finally, we observe that considering more inequality types can sometimes lead to slightly weaker bounds. 
In these cases, the CP-ADMM is terminated before all violated inequalities are added to the relaxation, see Section~\ref{Sec: Stopping criteria}.

\subsection{Limiting the number of new cuts per variable} \label{Sec: maxCutsPerVar}
In this section, we report on the considerable benefit of limiting the maximum number of new cuts in which each variable may appear. 
Recall that in our default configuration of the CP-ADMM, \texttt{maxCutsPerVar} is set to $5$.
On the 45 instances where the default configuration added at least one cut, we also ran a version of the algorithm without the \texttt{maxCutsPerVar} limit.
On average, the default configuration requires only 50\% the runtime, 73\% the number of cuts and 42\% the amount of time per inner iteration compared to the configuration without the limit. 
One explanation for the decrease in runtime is that the \texttt{maxCutsPerVar} limit makes it easier to partition the cuts into a small number of clusters of non-overlapping cuts, thereby reducing the runtime required for Dykstra's cyclic projection algorithm.
On instances where the configuration without the \texttt{maxCutsPerVar} limit reaches the time limit, the default configuration often yields stronger upper bounds, as it can perform more iterations in the same amount of time.

\subsection{Lower bounds on the chromatic number} \label{Sec: Chromatic number}
\citet{Kuryatnikova2022MkCSProblem} exploited upper bounds for the M$k$CS problem to compute lower bounds for the chromatic number $\chi(\mc{G})$ of a graph. 
In particular, they used the following relation
\begin{equation}
    \chi(\mc{G}) \geq \max \left \{ k: UB_k(\mc{G}) < n \right \} + 1,
\end{equation}
where $UB_k(\mc{G})$ is an upper bound on $\alpha_k(\mc{G})$ for $k \in [n-1]$.
In this section, we show that our improved upper bounds for the M$k$CS problem yield improved lower bounds on $\chi(\mc{G})$.

To compute the required upper bounds, we use a slightly modified version of the CP-ADMM.
Namely, we adapt its stopping criteria by setting \texttt{minIneq} to $1$ and by not requiring any longer a minimum improvement \texttt{minImpr} in the valid upper bound per outer iteration.
Moreover, the CP-ADMM is terminated early whenever a valid upper bound strictly smaller than $n$ is found. 
We additionally compute a valid upper bound after every 100 inner iterations during the first outer iteration.

A straightforward approach would be to sequentially call the CP-ADMM for $k=1, 2, \hdots$ until reaching a value of $k$ for which $UB_k(\mc{G}) \geq n$. 
Instead, we employ a more efficient search strategy that reduces the number of calls to the CP-ADMM. Namely, we exploit the following relation:
\begin{equation}
    \chi(\mc{G}) \geq \ceil{\frac{kn}{\floor{UB_k(\mc{G})}}} \geq k + 1 \quad\forall k \in [n-1] \text{ with } UB_k(\mc{G}) < n.
\end{equation}
Concretely, we proceed iteratively over values of $k$, starting from $k=1$. 
For each considered value of~$k$, we compute an upper bound $UB_k(\mc{G})$ on $\alpha_k(\mc{G})$ using the CP-ADMM. If $UB_k(\mc{G}) < n$, we update $k$ to $\ceil{\frac{kn}{\floor{ UB_k(\mc{G})}}}$ and continue the algorithm. 
Otherwise, the algorithm terminates and the final value of $k$ is a valid lower bound on $\chi(\mc{G})$.
We also tested a binary search variant of the algorithm, but it performed worse in terms of runtime on average. 

We evaluate the obtained lower bounds on 36 benchmark graphs, consisting of 21 graphs from the COLOR02 symposium \citep{Johnson2002COLOR02} that were considered in \cite{Dukanovic2007SDPGraphColoring}, 3 further COLOR02 graphs considered in \cite{Gaar2020SDPGraphColoring}, and 3 Queen graphs and 9 random graphs considered in \cite{Pucher2025PracticalExperience}. 
We compare our bounds against SDP-based bounds reported in \cite{Dukanovic2007SDPGraphColoring, Gaar2020SDPGraphColoring, Kuryatnikova2022MkCSProblem, Pucher2025PracticalExperience}.

Our results are as follows.
On 3 of the 36 graphs, our algorithm improved upon the best previously known bounds:
\begin{itemize}
    \item On graph random\_100\_0.50 (with $n=100$ and $\rho = 0.50$), we obtain a lower bound of $12$, improving upon the lower bound $11$ found in \cite{Pucher2025PracticalExperience}.
    \item On graph DSJC125.5 (with $n=125$ and $\rho = 0.50$), we obtain a lower bound of $14$, improving upon the lower bound $12$ found in \cite{Dukanovic2007SDPGraphColoring} and \cite{Kuryatnikova2022MkCSProblem}.
    \item On graph DSJC125.9 (with $n=125$ and $\rho = 0.90$), we obtain a lower bound of $43$, improving upon the lower bound $38$ found in \cite{Dukanovic2007SDPGraphColoring} and \cite{Kuryatnikova2022MkCSProblem}.
\end{itemize}
On the graph myciel3 (with $n=20$ and $\rho = 0.36$), we obtain a lower bound of $3$, whereas the algorithm from \cite{Gaar2020SDPGraphColoring} yields a lower bound of $4$.
On the remaining 32 instances, our algorithm matches the best known lower bound.
Our algorithm required less than 3 minutes per instance, except for the instance with graph 4-Insertions\_4,  where the algorithm was terminated after 1 hour.

\section{Conclusion} \label{Sec: Conclusion}
We considered the M$k$CS problem and studied an SDP relaxation that was derived from a BSDP formulation of the problem.  
Among other results, we proved that the optimal solution of the SDP relaxation \eqref{SDP}, denoted by $\theta_k(\mc{G})$, is sandwiched between $\alpha_k(\mc{G})$ and the $k$-clique cover number $\Psi_k(\overline{\mc{G}})$, implying that the M$k$CS problem is solvable in polynomial time up to fixed precision for $k$-perfect graphs. 
Moreover, we proposed two novel families of valid inequalities, namely generalized BQP inequalities and generalized rank inequalities. 
We further considered computationally tractable special cases of these inequalities, which we used to strengthen the SDP relaxation.
We efficiently solved the strengthened relaxation using the CP-ADMM. 
We also introduced the INT-ADMM, which yields feasible solutions for the BSDP formulation of the problem. 
The INT-ADMM was obtained from the standard ADMM by adding a third matrix variable that satisfies a sphere constraint. 
While this introduces nonconvexity, a gradual increase of the penalty parameter $\beta$, combined with a mechanism to escape local optima, led in practice to high-quality feasible solutions.

Our computational experiments showed that our valid upper and lower bounds outperform existing bounds on most benchmark instances.
Furthermore, we compared the effectiveness of our valid inequalities, showing in particular that while the generalized triangle inequalities are effective on their own, they have little added value on top of the generalized clique and odd-hole inequalities. 
We also found that limiting the number of new cuts per variable considerably improves the performance of the CP-ADMM. 
In addition, our valid upper bounds for the M$k$CS problem were shown to yield competitive lower bounds for the graph coloring problem.

This research can be extended in several directions.
For example, it would be interesting to incorporate the CP-ADMM and INT-ADMM in a branch-and-bound framework to obtain optimal solutions.
Moreover, the proposed generalized BQP inequalities may be useful to strengthen matrix-lifting relaxations of other packing or partitioning problems, while the generalized rank inequalities could be used for problems involving conflicts, such as the multiple knapsack problem with conflicts, the bin packing problem with conflicts, or quadratic variants thereof.
Finally, we expect that the INT-ADMM can be successfully applied for other optimization problems that can be formulated as a BSDP problem, especially when the SDP relaxation obtained by relaxing integrality can be solved efficiently by the STD-ADMM or CP-ADMM. 

\bibliographystyle{apalike}
\bibliography{references}

@article{Wu2019LpBoxADMM,
  author={Wu, Baoyuan and Ghanem, Bernard},
  journal={IEEE Transactions on Pattern Analysis and Machine Intelligence}, 
  title={$\ell_p$-Box {ADMM}: A Versatile Framework for Integer Programming}, 
  year={2019},
  volume={41},
  number={7},
  pages={1695-1708},
  doi={10.1109/TPAMI.2018.2845842}
}

@article{Oliveira2018ADMMForQAP,
    author = {Danilo Elias Oliveira and Henry Wolkowicz and Yangyang Xu},
    year = {2018},
    title = {{ADMM} for the {SDP} relaxation of the {QAP}},
    journal = {Mathematical Programming Computation},
    volume = {10},
    issue = {4},
    pages = {631-658},
    url = {https://doi.org/10.1007/s12532-018-0148-3}
}

@article{He2016ConvergenceSymmetricADMM,
    author = {He, Bingsheng and Ma, Feng and Yuan, Xiaoming},
    title = {Convergence Study on the Symmetric Version of {ADMM} with Larger Step Sizes},
    journal = {SIAM Journal on Imaging Sciences},
    volume = {9},
    number = {3},
    pages = {1467-1501},
    year = {2016},
    doi = {10.1137/15M1044448},
    URL = {https://doi.org/10.1137/15M1044448}
}

@article{Wu2018LpBoxUsage,
  author={Wu, Qiong and Zhang, Fan and Wang, Hao and Lin, Jun and Liu, Yang},
  journal={IEEE Communications Letters}, 
  title={Parameter-Free  $\ell_p$-Box Decoding of {LDPC} Codes}, 
  year={2018},
  volume={22},
  number={7},
  pages={1318-1321},
  doi={10.1109/LCOMM.2018.2830787}
}

@article{Jiao2021PenalizedLpBox,
  author={Jiao, Xiaopeng and Liu, Haiyang and Mu, Jianjun and He, Yu-Cheng},
  journal={IEEE Transactions on Vehicular Technology}, 
  title={$l_2$-Box {ADMM} Decoding for {LDPC} Codes Over {ISI} Channels}, 
  year={2021},
  volume={70},
  number={4},
  pages={3966-3971},
  doi={10.1109/TVT.2021.3068398}}

@article{deMeijer2024BSDP,
    title={On integrality in semidefinite programming for discrete optimization},
    author={Frank {de Meijer} and Renata Sotirov},
    journal = {SIAM Journal on Optimization},
    volume = {34},
    number = {1},
    pages = {1071-1096},
    year={2024}
}

@misc{Pucher2025PracticalExperience,
    author = {Dunja Pucher and Franz Rendl},
    year = {2025},
    title = {Practical Experience with Stable Set and Coloring Relaxations},
    note = {Peprint, \url{https://arxiv.org/abs/2401.17069}},
    url={https://arxiv.org/abs/2401.17069}
}

@article{Pucher2023SSAndColoringBounds,
    author = {Dunja Pucher and Franz Rendl},
    year = {2023},
    title = {Stable-set and coloring bounds based on 0-1 quadratic optimization},
    journal = {Applied Set-Valued Analysis and Optimization},
    volume = {5},
    number = {2},
    pages = {233-251}
}

@article{Nemhauser1974PropertiesPolyhedra,
    author = {Nemhauser, G. L. and  Trotter, L. E.},
    title = {Properties of vertex packing and independence system polyhedra},
    journal = {Mathematical Programming},
    volume = {6},
    number = {1},
    pages = {48-61},
    year = {1974},
    doi = {10.1007/BF01580222},
    URL = {https://doi.org/10.1007/BF01580222}
}

@article{Letchford2020CliqueAndNodalInequalities,
    title = {The stable set problem: Clique and nodal inequalities revisited},
    journal = {Computers \& Operations Research},
    volume = {123},
    pages = {105024},
    year = {2020},
    issn = {0305-0548},
    doi = {https://doi.org/10.1016/j.cor.2020.105024},
    url = {https://www.sciencedirect.com/science/article/pii/S0305054820301416},
    author = {Adam N. Letchford and Fabrizio Rossi and Stefano Smriglio}
}

@article{Padberg1989BQP,
    author = {Padberg, Manfred},
    title = {The {B}oolean quadric polytope: Some characteristics, facets and relatives},
    year = {1989},
    journal = {Mathematical Programming},
    volume = {45},
    number = {1},
    pages = {139-172},
    doi = {10.1007/BF0158910},
    url = {https://doi.org/10.1007/BF01589101}
}

@article{Januschowski2011MkCS,
    title = {The maximum $k$-colorable subgraph problem and orbitopes},
    journal = {Discrete Optimization},
    volume = {8},
    number = {3},
    pages = {478-494},
    year = {2011},
    issn = {1572-5286},
    doi = {https://doi.org/10.1016/j.disopt.2011.04.002},
    url = {https://www.sciencedirect.com/science/article/pii/S1572528611000272},
    author = {Tim Januschowski and Marc E. Pfetsch},
}

@article{Kuryatnikova2022MkCSProblem,
    author = {Kuryatnikova, Olga and Sotirov, Renata and Vera, Juan C.},
    title = {The Maximum $k$-Colorable Subgraph Problem and Related Problems},
    journal = {INFORMS Journal on Computing},
    volume = {34},
    number = {1},
    pages = {656-669},
    year = {2022},
    doi = {10.1287/ijoc.2021.1086},
    URL = {https://doi.org/10.1287/ijoc.2021.1086},
    eprint = {https://doi.org/10.1287/ijoc.2021.1086}
}

@article{deMeijer2021QCCP,
    author = {Frank {de Meijer} and Renata Sotirov},
    title = {{SDP}-Based Bounds for the Quadratic Cycle Cover Problem via Cutting-Plane Augmented {L}agrangian Methods and Reinforcement Learning},
    journal = {INFORMS Journal on Computing},
    volume = {33},
    number = {4},
    pages = {1262-1276},
    year = {2021},
    doi = {10.1287/ijoc.2021.1075},
    URL = {https://doi.org/10.1287/ijoc.2021.1075},
    eprint = {https://doi.org/10.1287/ijoc.2021.1075}
}

@book{Marshall2011Majorization,
  title={Inequalities: Theory of majorization and its applications},
  author={Marshall, Albert W and Olkin, Ingram and Arnold, Barry C},
  year={2011},
  publisher={Springer, New York, NY}
}

@book{Helmberg2000SDPForCO,
  title={Semidefinite Programming for Combinatorial Optimization},
  author={C. Helmberg},
  year={2000},
  publisher={Konrad-Zuse-Zentrum für Informationstechnik},
  address={Berlin}
}

@article{Gruber2003SSRelaxations,
    author = {Gruber, Gerald and Rendl, Franz},
    title = {Computational Experience with Stable Set Relaxations},
    journal = {SIAM Journal on Optimization},
    volume = {13},
    number = {4},
    pages = {1014-1028},
    year = {2003},
    doi = {10.1137/S1052623401394092},
    URL = {https://doi.org/10.1137/S1052623401394092},
    eprint = {https://doi.org/10.1137/S1052623401394092}
}

@article{Hu2020ADMMQSPP,
    author = {Hu, Hao and Sotirov, Renata},
    title = {On Solving the Quadratic Shortest Path Problem},
    journal = {INFORMS Journal on Computing},
    volume = {32},
    number = {2},
    pages = {219-233},
    year = {2020},
    doi = {10.1287/ijoc.2018.0861},
    URL = {https://doi.org/10.1287/ijoc.2018.0861},
    eprint = {https://doi.org/10.1287/ijoc.2018.0861}
}

@article{deMeijer2023ADMMPartitioning,
    title = {Partitioning through projections: Strong {SDP} bounds for large graph partition problems},
    journal = {Computers \& Operations Research},
    volume = {151},
    pages = {106088},
    year = {2023},
    issn = {0305-0548},
    doi = {https://doi.org/10.1016/j.cor.2022.106088},
    url = {https://www.sciencedirect.com/science/article/pii/S0305054822003185},
    author = {Frank {de Meijer} and Renata Sotirov and Angelika Wiegele and Shudian Zhao}
}

@misc{Sinjorgo2025ADMMStability,
      title={{SDP} bounds on the stability number via {ADMM} and intermediate levels of the {L}asserre hierarchy}, 
      author={Lennart Sinjorgo and Renata Sotirov and Juan C. Vera},
      year={2025},
      note = {Preprint, \url{https://arxiv.org/abs/2506.08648}},
      eprint={2506.08648},
      archivePrefix={arXiv},
      primaryClass={math.OC},
      url={https://arxiv.org/abs/2506.08648}, 
}

@article{deMeijer2025ADMMQMSTP,
    title = {Spanning and splitting: Integer semidefinite programming for the quadratic minimum spanning tree problem},
    journal = {European Journal of Operational Research},
    year = {2025},
    volume = {331},
    number = {2},
    pages = {381-395},
    issn = {0377-2217},
    doi = {https://doi.org/10.1016/j.ejor.2025.10.051},
    url = {https://www.sciencedirect.com/science/article/pii/S0377221725008938},
    author = {Frank {de Meijer} and Melanie Siebenhofer and Renata Sotirov and Angelika Wiegele}
}

@article{Letchford2012BinarySDPMatrices,
    author = {Adam N. Letchford and Michael Sørensen},
    year = {2012},
    title = {Binary positive semidefinite matrices and associated integer polytopes},
    journal = {Mathematical Programming},
    pages = {253-271},
    volume = {131},
    issue = {1},
    url = {https://doi.org/10.1007/s10107-010-0352-z}
}

@article{Li2021PRSMMinCut,
    author = {Li, Xinxin and Pong, Ting Kei and Sun, Hao and Wolkowicz, Henry},
    year = {2021},
    title = {A strictly contractive {P}eaceman-{R}achford splitting method for the doubly nonnegative relaxation of the minimum cut problem},
    journal = {Computational Optimization and Applications},
    pages = {853-891},
    volume = {78},
    issue = {3},
    url = {https://doi.org/10.1007/s10589-020-00261-4}
}

@InProceedings{Boyle1986Dykstra,
    author="Boyle, James P.
    and Dykstra, Richard L.",
    editor="Dykstra, Richard
    and Robertson, Tim
    and Wright, Farroll T.",
    title="A method for finding projections onto the intersection of convex sets in {H}ilbert spaces",
    booktitle="Advances in Order Restricted Statistical Inference",
    year="1986",
    publisher="Springer, New York, NY",
    pages="28--47",
    isbn="978-1-4613-9940-7"
}

@article{Boyd2011ADMMTheory,
    author = {Boyd, Stephen and Parikh, Neal and Chu, Eric and Peleato, Borja and Eckstein, Jonathan},
    year = {2011},
    pages = {1-122},
    title = {Distributed Optimization and Statistical Learning via the Alternating Direction Method of Multipliers},
    volume = {3},
    journal = {Foundations and Trends in Machine Learning},
    doi = {10.1561/2200000016}
}

@phdthesis{Narasimhan1989ThesisMkCSProblem,
  author = {Narasimhan, Giri},
  title  = {The maximum $k$-colorable subgraph problem},
  school = {University of Wisconsin-Madison},
  year   = {1989}
}

@article{Bron1973MaxCliques,
    author = {Bron, Coen and Kerbosch, Joep},
    title = {Algorithm 457: Finding all cliques of an undirected graph},
    year = {1973},
    issue_date = {Sept. 1973},
    publisher = {Association for Computing Machinery},
    address = {New York, NY},
    volume = {16},
    number = {9},
    issn = {0001-0782},
    url = {https://doi.org/10.1145/362342.362367},
    doi = {10.1145/362342.362367},
    journal = {Communications of the ACM},
    month = sep,
    pages = {575–577},
    numpages = {3}
}

@article{Alizadeh1995SDPGenTheta,
    author={Alizadeh,Farid},
    year={1995},
    month={02},
    title={Interior Point Methods in Semidefinite Programming with Applications to Combinatorial Optimization},
    journal={SIAM Journal on Optimization},
    volume={5},
    number={1},
    pages={13-39},
    isbn={10526234},
    language={English},
    url={https://tilburguniversity.idm.oclc.org/login?url=https://www.proquest.com/scholarly-journals/interior-point-methods-semidefinite-programming/docview/920195812/se-2},
}

@InProceedings{Januschowski2011ILPSym,
    author="Januschowski, Tim
    and Pfetsch, Marc E.",
    editor="Achterberg, Tobias
    and Beck, J. Christopher",
    title="Branch-Cut-and-Propagate for the Maximum k-Colorable Subgraph Problem with Symmetry",
    booktitle="Integration of AI and OR Techniques in Constraint Programming for Combinatorial Optimization Problems",
    year="2011",
    publisher="Springer, Berlin,  Heidelberg",
    pages="99--116",
    isbn="978-3-642-21311-3"
}

@article{Campelo2010ILPLagr,
    title = {A Combined Parallel {L}agrangian Decomposition and Cutting-Plane Generation for Maximum Stable Set Problems},
    journal = {Electronic Notes in Discrete Mathematics},
    volume = {36},
    pages = {503-510},
    year = {2010},
    note = {ISCO 2010 - International Symposium on Combinatorial Optimization},
    issn = {1571-0653},
    doi = {https://doi.org/10.1016/j.endm.2010.05.064},
    url = {https://www.sciencedirect.com/science/article/pii/S157106531000065X},
    author = {Manoel Campêlo and Ricardo C. Corrêa}
}

@article{Quintero2022Quantum,
    title = {Characterization of {QUBO} reformulations for the maximum $k$-colorable subgraph problem},
    journal = {Quantum Information Processing},
    volume = {21},
    pages = {89},
    year = {2022},
    doi = {10.1007/s11128-022-03421-z},
    url = {https://doi.org/10.1007/s11128-022-03421-z},
    author = {Quintero, Rodolfo and Bernal, David and Terlaky, Tamás and Zuluaga, Luis F.}
}

@article{Narasimhan1990GenTheta,
    title = {A generalization of {L}ovász $\vartheta$ function},
    author = {G. Narasimhan and R. Manber},
    journal = {DIMACS Series in Discrete Mathematics and Theoretical Computer Science},
    volume = {1},
    pages = {19-27},
    number = {},
    year = {1990},
    doi = {},
}

@article{Lewis1980NPHardness,
title = {The node-deletion problem for hereditary properties is {NP}-complete},
journal = {Journal of Computer and System Sciences},
volume = {20},
number = {2},
pages = {219-230},
year = {1980},
issn = {0022-0000},
doi = {https://doi.org/10.1016/0022-0000(80)90060-4},
url = {https://www.sciencedirect.com/science/article/pii/0022000080900604},
author = {John M. Lewis and Mihalis Yannakakis}
}

@article{ApplicationChannelAssignment1,
author = {Halldórsson, Magnús and Li, Li and Joseph Halpern and Mirrokni, Vahab},
year = {2004},
month = {07},
pages = {235-248},
title = {On spectrum sharing games},
volume = {22},
number = {4},
journal = {Distributed Computing},
doi = {10.1007/s00446-010-0098-0}
}

@INPROCEEDINGS{ApplicationChannelAssignment2,
  author={Subramanian, Anand Prabhu and Gupta, Himanshu and Das, Samir R. and Buddhikot, Milind M.},
  booktitle={2007 2nd IEEE International Symposium on New Frontiers in Dynamic Spectrum Access Networks}, 
  title={Fast Spectrum Allocation in Coordinated Dynamic Spectrum Access Based Cellular Networks}, 
  year={2007},
  volume={},
  number={},
  pages={320-330},
  doi={10.1109/DYSPAN.2007.50}
  }

@Inproceedings{ApplicationChannelAssignment3,
    author = {A.M.C.A. Koster and M. Scheffel},
    title = {A routing and network dimensioning strategy to reduce wavelength continuity conflicts in all-optical networks.},
    booktitle = {Proceedings of the International Network Optimization Conference (INOC) 2007}, 
    year = {2007}
}

@article{ApplicationChannelAssignment4,
    author = {Hertz, A. and Montagné, R. and Gagnon, F.},
    year = {2016},
    title = {Constructive algorithms for the partial directed weighted improper coloring problem},
    journal = {Journal of Graph Algorithms and Applications},
    volume = {20},
    number = {2}, 
    pages = {159–188},
    url = {https://doi.org/10.7155/jgaa.00389}
}

@article{ApplicationChannelAssignment5,
title = {Inductive $k$-independent graphs and $c$-colorable subgraphs in scheduling: A review},
author = {Bentert, Matthias and Bevern, René and Niedermeier, Rolf},
year = {2019},
journal = {Journal of Scheduling},
volume = {22},
number = {1},
pages = {3-20},
url = {https://EconPapers.repec.org/RePEc:spr:jsched:v:22:y:2019:i:1:d:10.1007_s10951-018-0595-8}
}

@article{ApplicationVLSI1,
  author={Marek-Sadowska, M.},
  journal={IEEE Transactions on Computer-Aided Design of Integrated Circuits and Systems}, 
  title={An Unconstrained Topological Via Minimization Problem for Two-Layer Routing}, 
  year={1984},
  volume={3},
  number={3},
  pages={184-190},
  doi={10.1109/TCAD.1984.1270074}
}

@article{ApplicationVLSI2,
    title = {Solving {VLSI} design and {DNA} sequencing problems using bipartization of graphs},
    journal = {Computational Optimization and Applications},
    volume = {51},
    number = {2},
    pages = {749-781},
    year = {2012},
    doi = {10.1007/s10589-010-9355-1},
    author = {Fouilhoux, Pierre and Mahjoub, A. Ridha}
}

@article{ApplicationGenetics,
  title={Algorithmic strategies for the single nucleotide polymorphism haplotype assembly problem},
  author={Ross Lippert and Russell Schwartz and Giuseppe Lancia and Sorin Istrail},
  journal={Briefings in bioinformatics},
  year={2002},
  volume={3},
  number={1},
  pages={23-31},
  url={https://api.semanticscholar.org/CorpusID:14950824}
}

@article{ApplicationJobScheduling,
    title = {On the $k$-coloring of intervals},
    journal = {Discrete Applied Mathematics},
    volume = {59},
    number = {3},
    pages = {225-235},
    year = {1995},
    issn = {0166-218X},
    doi = {https://doi.org/10.1016/0166-218X(95)80003-M},
    url = {https://www.sciencedirect.com/science/article/pii/0166218X9580003M},
    author = {Martin C. Carlisle and Errol L. Lloyd}
}

@article{Lovasz1979,
    author = {L\'aszl\'o Lov\'asz},
    year = {1979},
    month = {02},
    pages = {1-7},
    title = {On the {S}hannon capacity of a graph},
    volume = {25},
    journal = {Information Theory, IEEE Transactions on},
    doi = {10.1109/TIT.1979.1055985}
}

@article{Schrijver1979,
  author={Schrijver, A.},
  journal={IEEE Transactions on Information Theory}, 
  title={A comparison of the {D}elsarte and {L}ovász bounds}, 
  year={1979},
  volume={25},
  number={4},
  pages={425-429},
  doi={10.1109/TIT.1979.1056072}
}

@article{Sinjorgo2022GenVartheta,
    author = {Sinjorgo, Lennart and Sotirov, Renata},
    title = {On the Generalized $\vartheta$-Number and Related Problems for Highly Symmetric Graphs},
    journal = {SIAM Journal on Optimization},
    volume = {32},
    number = {2},
    pages = {1344-1378},
    year = {2022},
    doi = {10.1137/21M1414620},
    URL = {https://doi.org/10.1137/21M1414620},
    eprint = {https://doi.org/10.1137/21M1414620}
}

@book{Johnson1996DIMACS,
    author = {David S. Johnson and Michael A. Trick},
    title = {Cliques, Coloring, and Satisfiability: Second DIMACS Implementation Challenge, Workshop, October 11-13, 1993},
    year = {1996},
    isbn = {0821866095},
    publisher = {AMS, Boston, MA},
}

@misc{Johnson2002COLOR02,
    author = {D. S. Johnson and A. Mehrotra and M. A. Trick},
    title = {{COLOR}02/03/04: Graph coloring and its generalizations},
    year = {2002},
    howpublished  = {\url{https://mat.tepper.cmu.edu/COLOR04}}
}

@misc{CVX,
  author       = {Michael Grant and Stephen Boyd},
  title        = {{CVX}: Matlab Software for Disciplined Convex Programming, version 2.1},
  howpublished = {\url{https://cvxr.com/cvx}},
  month        = mar,
  year         = 2014
}

@article{Dukanovic2007SDPGraphColoring,
    author = {Dukanovic, Igor and Rendl, Franz},
    title = {Semidefinite programming relaxations for graph coloring and maximal clique problems},
    year = {2007},
    issue_date = {March 2007},
    publisher = {Springer-Verlag},
    address = {Berlin, Heidelberg},
    volume = {109},
    issn = {0025-5610},
    journal = {Mathematical Programming},
    month = mar,
    pages = {345–365},
    numpages = {21}
}

@article{Gaar2020SDPGraphColoring,
    author = {Gaar, Elisabeth and Rendl, Franz},
    year = {2020},
    title = {A computational study of exact subgraph based {SDP} bounds for Max-Cut, stable set and coloring},
    journal = {Mathematical Programming},
    pages = {283-308},
    volume = {183},
    issue = {1},
    url = {https://doi.org/10.1007/s10107-020-01512-2}
}

@article{Greene1976,
  author  = {C. Greene},
  title   = {Some partitions associated with a partially ordered set},
  journal = {Journal of Combinatorial Theory, Series A},
  volume  = {20},
  number  = {1},
  pages   = {69--79},
  year    = {1976}
}

@article{GreeneKleitman1976,
  author  = {C. Greene and D. J. Kleitman},
  title   = {The structure of {S}perner $k$-families},
  journal = {Journal of Combinatorial Theory, Series A},
  volume  = {20},
  number  = {1},
  pages   = {41--68},
  year    = {1976}
}

@incollection{lovasz1983perfect,
  author    = {L{\'a}szl{\'o} Lov{\'a}sz},
  title     = {Perfect Graphs},
  booktitle = {Selected Topics in Graph Theory II},
  editor    = {L. W. Beineke and R. J. Wilson},
  publisher = {Academic Press},
  year      = {1983},
  pages     = {55--87}
}

@article{Berge1992,
  author  = {C. Berge},
  title   = {The $q$-perfect graphs, {II}},
  journal = {Matematiche (Catania)},
  volume  = {47},
  year    = {1992},
  pages   = {205--211}
}

@mastersthesis{Oliveira2024,
  author       = {Oliveira, T. L.},
  title        = {The maximum $k$-colorable subgraph problem},
  school       = {Universidade de S\~ao Paulo},
  type         = {Master thesis},
  year         = {2024},
  doi          = {10.11606/D.45.2024.tde-25082025-111037},
  url          = {https://www.teses.usp.br},
}

@article{Grtschel1984PolynomialAF,
  title={Polynomial Algorithms for Perfect Graphs},
  author={Martin Gr{\"o}tschel and L{\'a}szl{\'o} Lov{\'a}sz and Alexander Schrijver},
  journal={Annals of discrete mathematics},
  year={1984},
  volume={88},
  pages={325-356},
  url={https://api.semanticscholar.org/CorpusID:3871264}
}

@article{Chudnovsky2006SPGT,
  author  = {Maria Chudnovsky and Neil Robertson and Paul Seymour and Robin Thomas},
  title   = {The Strong Perfect Graph Theorem},
  journal = {Annals of Mathematics},
  volume  = {164},
  number  = {1},
  pages   = {51--229},
  year    = {2006}
}

\section*{Statements and declarations}
\paragraph{Funding} The authors declare that no funds, grants, or other support were received during the preparation of this manuscript.
\paragraph{Competing interests} The authors have no relevant financial or non-financial interests to disclose.
\paragraph{Code availability} Our code is available upon request.
\paragraph{Data availability} The instances considered in our computational experiments can be found at 
\url{https://github.com/jamestrimble/max-weight-clique-instances/tree/master/DIMACS/weighted} (graphs from the DIMACS Implementation Challenge),
\url{https://mat.tepper.cmu.edu/COLOR04/} (graphs from the COLOR02 symposium), and 
\url{https://arxiv.org/abs/2401.17069} (random graphs from \citep{Pucher2025PracticalExperience}).

\end{document}